\DeclareFontFamily{U}{futm}{}
\DeclareFontShape{U}{futm}{m}{n}{
  <-> s * [1.0] fourier-bb
  }{}
\DeclareSymbolFont{Ufutm}{U}{futm}{m}{n}
\DeclareSymbolFontAlphabet{\mathbb}{Ufutm}
\newcommand{\defi}[1]{\textit{\textsf{#1}}}
\setlist[enumerate]{leftmargin=0pt,itemindent=1cm}
\tikzset{%
  symbol/.style={
    draw=none,
    every to/.append style={
      edge node={node [sloped, allow upside down, auto=false]{$#1$}}
    },
  },
}
      \newtheorem{thm}{Theorem}[section]
      \newtheorem{lem}[thm]{Lemma}
      \newtheorem{prop}[thm]{Proposition}
      \newtheorem*{lem*}{Lemma}
\newtheorem*{thm*}{Theorem}
\newtheorem*{prop*}{Proposition}
      \theoremstyle{definition}
      \newtheorem{emp}[thm]{}
      \newtheorem{exmp}[thm]{Example}
      \newtheorem{rem}[thm]{Remark}
      \newtheorem{quest}[thm]{Question}
\theoremstyle{definition}
{
\newtheorem*{exmp*}{Example}
\newtheorem*{defn*}{Definition}
\newtheorem*{rem*}{Remark}
\newtheorem*{ans*}{Answer}
\newtheorem*{quest*}{Question}
}
\numberwithin{equation}{section}
\numberwithin{figure}{section}
\numberwithin{table}{section}
    \let\c@equation\c@thm
    \let\c@figure\c@thm
    \let\c@table\c@thm
\tikzset{
    labl/.style={anchor=south, rotate=90, inner sep=.5mm}
}
\newcommand{\mbf}[1]{\mathbf{#1}}
\newcommand{\inv}{^{-1}}
\newcommand{\Q}{\mathbf{Q}}
\newcommand{\Z}{\mathbf{Z}}
\newcommand{\C}{\mathbf{C}}
\newcommand{\p}{\mathbf{P}}
\newcommand{\X}{\mathbb{X}}
\newcommand{\mm}{\mathbb{M}}
\newcommand{\D}{\mathbb{D}}
\newcommand{\ee}{\mathbb{E}}
\let\L\relax
\newcommand{\L}{\mathbb{L}}
\newcommand{\J}{\mathbb{J}}
\newcommand{\ff}{\mathbb{F}}
\newcommand{\kk}{\mathbb{K}}
\newcommand{\calm}{\mathcal{M}}
\newcommand{\calp}{\mathcal{P}}
\newcommand{\calx}{\mathcal{X}}
\newcommand{\calb}{\mathcal{B}}
\newcommand{\calj}{\mathcal{J}}
\newcommand{\A}{\mathbb{A}}
\newcommand{\prsup}[2]{\prescript{#1}{}{#2}}
\newcommand{\prsub}[2]{\prescript{}{#1}{#2}}
\newcommand{\scrc}{\mathscr{C}}
\newcommand{\mf}{\mathfrak}
\newcommand{\inj}{\lhook\joinrel\longrightarrow}
\newcommand{\surj}{\twoheadrightarrow}
\newcommand{\gal}{\textup{Gal}}
\newcommand{\Hom}{\textup{Hom}}
\newcommand{\id}{\textup{Id}}
\newcommand{\iso}{\xrightarrow{\raisebox{-0.7ex}[0ex][0ex]{$\;\sim\;$}}}
\newcommand{\disc}{\textup{disc}}
\newcommand{\qbar}{\overline{\Q}}
\newcommand*{\DashedArrow}[1][]{\mathbin{\tikz [baseline=-0.25ex,-latex, dashed,#1] \draw [#1] (0pt,0.5ex) -- (1.3em,0.5ex);}}
\newcommand{\wt}[1]{\widetilde{#1}}
\newcommand{\ul}[1]{\underline{#1}}
\newcommand{\ol}[1]{\overline{#1}}
\newcommand{\kbar}{\ol{K}}
\newcommand{\ceq}{\coloneqq}
\newcommand{\res}[1]{\textup{Res}_{#1}}
\newcommand{\fetk}{finite \'etale $K$-algebra }
\newcommand{\too}{\longrightarrow}
\newcommand{\mtoo}{\longmapsto}
\newcommand{\triv}{\mathbf{1}}
\newcommand{\ind}[2]{\textup{Ind}^{#1}_{#2}}
\newcommand{\Et}{\textup{\textsf{\'Et}}}
\let\gg\relax
\newcommand{\gg}[1]{G_{#1}}
\newcommand*\Bell{\ensuremath{\boldsymbol\ell}}
\newcommand{\divzero}[1]{\textup{div}_0(#1)}
\newcommand{\V}[2]{V({#1}|{#2})}
\newcommand{\W}[1]{W(#1)}
\newcommand{\sch}[2]{#1\textup{--\textsf{sch}}(#2)}
\newcommand{\ksch}{K\textup{--\textsf{sch}}}
\newcommand{\qgk}{\Q G_K}
\newcommand{\qg}{\Q G}
\newcommand{\qgm}{\Q G_{\mm}}
\newcommand{\glk}{G_{L/K}}
\renewcommand{\arraystretch}{1.5}
\tikzset{
labl1/.style={anchor=north, rotate=90, inner sep=1.2mm}
}
\DeclareMathOperator{\chr}{\textup{char}}
\DeclareMathOperator{\spec}{\textup{Spec}} 
\DeclareMathOperator{\Div}{\textup{Div}}
\DeclareMathOperator{\rk}{\textup{rk}}
\DeclareMathOperator{\aut}{\textup{Aut}}
\DeclareMathOperator{\codim}{\textup{codim}}
\DeclareMathOperator{\sym}{\textup{Sym}}
\DeclareMathOperator{\End}{\textup{End}}
\DeclareMathOperator{\stab}{\textup{Stab}}
\DeclareMathOperator{\trdeg}{\textup{trdeg}}
\DeclareMathOperator{\isom}{\textup{Isom}}
\DeclareMathOperator{\twist}{\textup{Twist}}
\DeclareMathOperator{\mor}{\textup{Mor}}
\let\deg\relax
\DeclareMathOperator{\deg}{\textup{deg}}
\let\dim\relax
\DeclareMathOperator{\dim}{\textup{dim}}
\subjclass{11F80, 11G10, 11G15, 11G30, 11G40, 12G05, 14G10, 14K22}
\begin{document}

\title[]{Realizing Galois representations in abelian varieties by specialization}
\author[]{Arvind Suresh}

\begin{abstract}
    We give some positive answers to the following problem: Given a field $K$ and a continuous Galois representation $\rho:G_K \to GL_n(\mathbf{Q})$, construct an abelian variety $J/K$ of small dimension such that $\rho$ is a sub-representation of the natural $G_K$-representation on $J(\overline{K}) \otimes_{\mathbf{Z}} \mathbf{Q}$. We prove that if $K$ is Hilbertian of characteristic different from $2$, then for any sufficiently large integer $g$ (depending on $\rho$) we can find infinitely many absolutely simple $g$-dimensional abelian varieties which realize $\rho$. We outline also a method of twisting a given symmetric construction of curves with many rational points to instead produce curves with closed points of large degree, and in this context we give a unified treatment of constructions of Mestre--Shioda and Liu--Lorenzini. The main results are obtained by applying a natural generalization of N\'eron's Specialization Theorem.
\end{abstract}

\date{\today}
\maketitle

\tableofcontents

\section{Introduction} 

Let $K$ be a field with separable closure $\kbar$, and let $G_K$ denote the absolute Galois group $\gal(\kbar/K)$. By a \defi{Galois module}, we mean a $\qgk$-module $V$, finite-dimensional over $\Q$, for which the associated Galois representation $\rho_V: G_K \too GL(V)$ is continuous, i.e., has kernel of finite index (such representations usually go by the name ``{Artin representations}'' in the literature). An abelian variety $J/K$ is said to \defi{realize} $V$ if $J(\kbar)_{\Q} \ceq J(\kbar) \otimes_{\Z} \Q$ contains a $\qgk$-submodule isomorphic to $V$ (equivalently, if $\rho_V$ can be realized as a sub-representation of the natural Galois representation on $J(\kbar)_{\Q}$). Now, our goal in this article is to exhibit positive solutions to the following problem.
\begin{quote}
    \textbf{The Realization Problem.} \textit{Given a Galois module $V$, produce an abelian variety $J/K$ which realizes $V$, with $\dim J$ small relative to $\dim_{\Q} V$.}
\end{quote}

Rohrlich~\cite{rohrlich-allreps}*{Proposition~17} showed that if $K$ is any field and $E/K$ is an elliptic curve with a $K$-rational point of infinite order, then {any} Galois module is realized by a suitable twisted form $J/K$ of the abelian variety $E^n/K$, for some $n\geqslant 1$. 
This leaves open the question of whether every Galois module is realized by an \emph{absolutely simple} abelian variety over $K$.  \Cref{thm:all-reps} below gives an affirmative answer to this question under fairly general assumption on $K$. Below, and for the rest of the article, by ``infinitely many objects over $K$'' we mean ``infinitely many objects over $K$ that are pair-wise non-isomorphic over $\kbar$.'' 

\begin{thm}[proved in \ref{proof-all reps}]\label{thm:all-reps}
Let $K$ be a Hilbertian field (see \ref{emp:thin-sets}) of characteristic different from $2$, and let $V$ be a Galois module. Then, for any sufficiently large integer $g$ (the implied lower bound depending on $V$), there exist infinitely many $g$-dimensional absolutely simple abelian varieties over $K$ which realize $V$.
\end{thm}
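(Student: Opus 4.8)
The plan is to realize $V$ inside $\jac$ of the generic member of a non-isotrivial family of curves over $K$ whose general member has an absolutely simple Jacobian, and then to descend to infinitely many $K$-fibres via the announced generalization of N\'eron's Specialization Theorem. The essential tension is that any \emph{geometric} sub-structure realizing $V$ (an abelian subvariety, or an isogeny factor) would immediately destroy absolute simplicity; the resolution is to make the copy of $V$ live inside the Mordell--Weil group of each member, coming from a Galois-stable configuration of marked points on the curve, rather than from a fixed abelian subvariety.

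First I would reduce to permutation modules. Let $L_0\ceq\kbar^{\ker\rho_V}$, a finite Galois extension of $K$ with group $\bar G\ceq\gal(L_0/K)$, through which $\rho_V$ factors. Since every $\Q$-irreducible $\bar G$-module occurs in the regular representation $\Q[\bar G]$ with multiplicity $\ge 1$, one gets $V\hookrightarrow\Q[\bar G]^{\oplus m}$ as $\bar G$-modules, where $m$ is the largest multiplicity of a $\Q$-irreducible constituent of $V$; equivalently $V\hookrightarrow\Q[X]$ as $\qgk$-modules, where $X\ceq\spec A$ for the \'etale $K$-algebra $A\ceq L_0^{\oplus m}$, of dimension $d\ceq m\,[L_0:K]$. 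As a composite of injections of Galois modules is an injection, it suffices to realize the permutation module $\Q[X]$ in an absolutely simple abelian variety of dimension $g$, for all large $g$. Next, for $g$ large relative to $d$, I would build a family $\pi\colon\calc\to\caln$ over $K$ of smooth projective genus-$g$ curves as follows: $\caln$ parametrizes tuples $(\underline\alpha,\underline\beta,f)$ with $\alpha_k\in L_0$ primitive elements of $L_0/K$ in pairwise distinct $G_K$-orbits, $\beta_k\in L_0$, and $f\in K[x]$ squarefree of degree $2g+1$ with $f(\alpha_k)=\beta_k^2$ for $1\le k\le m$; the curve is $\calc_{(\underline\alpha,\underline\beta,f)}\colon y^2=f(x)$ (this is where $\chr K\ne 2$ is used), with $K$-rational base point $\infty$ at infinity and embedding $\iota\colon\spec A\hookrightarrow\calc$ sending the $k$th copy of $\spec L_0$ to the closed point $(x,y)=(\alpha_k,\beta_k)$ (a non-Weierstrass point of residue field $L_0$). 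Interpolation exhibits $\caln$ as an affine-space bundle over an open subset of $\big(\res{L_0/K}(\A^1\times\A^1)\big)^m$, hence a geometrically irreducible rational $K$-variety, so $\caln(K)$ is non-thin; it is non-empty once $2g+1\ge d$. Over $\kbar$ the data defining $\caln$ is just a genus-$g$ hyperelliptic curve with $d$ marked non-Weierstrass points, so $\calc\to\caln$ dominates the hyperelliptic locus of $\calm_g$ and the geometric generic fibre $\calc_{\bar\eta}$ is a generic hyperelliptic curve; hence $\End(\jac\calc_{\bar\eta})=\Z$ by a theorem of Zarhin, so $\jac\calc_{\bar\eta}$ is absolutely simple (also taking $g\ge 2$). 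Finally, writing $\{Q_x\}_{x\in X}$ for the geometric points of $\iota(\spec A)$, the classes $[Q_x-\infty]$ assemble into a $G_{K(\caln)}$-equivariant map $\Q[X]\to\jac(\calc_\eta)(L)_\Q$ with $L\ceq L_0\cdot K(\caln)$ (using that $\infty$ is rational and that $L_0/K$ is linearly disjoint from the regular extension $K(\caln)/K$); it is injective because each nonzero $\Q$-linear relation among the $[Q_x-\infty]$ cuts out a proper subvariety of $\caln$ --- moving a single marked point along the curve makes $[Q_x-\infty]$ trace out the Abel--Jacobi image, which generates $\jac(\calc)$, so no such relation holds identically --- and the generic point of $\caln$ avoids all of them. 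Thus $\calc_\eta$ realizes $\Q[X]\supseteq V$.

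To finish, I would apply the generalized Specialization Theorem to the abelian scheme $\jac(\calc/\caln)\times_K L_0\to\caln\times_K L_0$ over the Hilbertian field $K$: for $s\in\caln(K)$ outside a thin set the specialization map $\jac(\calc_\eta)(L)\to\jac(\calc_s)(L_0)$ is injective, so $\jac(\calc_s)(L_0)_\Q$ contains a copy of $\Q[X]\supseteq V$ as a $\gal(L_0/K)$-module, i.e.\ $J_s\ceq\jac(\calc_s)$ realizes $V$; and, by the companion statement that the geometric endomorphism ring does not jump outside a thin set, $\End\big((J_s)_{\kbar}\big)=\Z$, so $J_s$ is absolutely simple, for $s$ outside a thin set. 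Since $\caln(K)$ is non-thin and thin sets are stable under finite unions, infinitely many $s$ are good; and since $\calc\to\caln$ is non-isotrivial while, by Torelli, each isomorphism class of abelian variety is the Jacobian of only finitely many of the $C_s$, the $J_s$ realize infinitely many $\kbar$-isomorphism classes. For $g\ge g_0(V)$ --- the bound coming from $2g+1\ge d$, from $g\ge 2$, and from the genericity used above --- this gives the theorem.

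I expect the main obstacle to be the step of arranging that the generic fibre is at once absolutely simple and carries a $\qgk$-copy of $V$: this is what forces $V$ to enter through the varying marked-point configuration (hence through Mordell--Weil) and not through a fixed abelian subvariety, and it is exactly there that one must verify the marked-point classes remain independent at the generic point. The descent to infinitely many $K$-fibres, controlling simultaneously the Mordell--Weil specialization and the non-jumping of endomorphisms by thin sets, is precisely what the generalized N\'eron Specialization Theorem is designed to deliver, so it reduces to setting that theorem up in sufficient generality.
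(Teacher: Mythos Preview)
Your overall architecture matches the paper's: reduce $V$ to a submodule of a permutation module attached to an \'etale $K$-algebra, build a family of hyperelliptic curves over a rational base carrying a divisor of the right type, verify independence of the associated divisor classes at the generic point, and specialize using the Hilbertian hypothesis. The paper packages the first step as $V\subset V(\Omega|K)$ with $\Omega=L_0^r$, and for the family uses the twisted Mestre--Shioda construction (which is more efficient, giving $4g+5$ independent classes rather than the $\sim 2g$ your interpolation yields, but this does not matter for the ``sufficiently large $g$'' statement).

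There is, however, a genuine gap in your simplicity argument. You invoke ``the companion statement that the geometric endomorphism ring does not jump outside a thin set'' to pass from $\End(\jac\calc_{\bar\eta})=\Z$ to $\End((J_s)_{\kbar})=\Z$ for $s$ outside a thin set. That specialization result (due to Noot, building on an argument of Serre) is only known for $K$ finitely generated of characteristic $0$; it is not available for arbitrary Hilbertian fields, in particular not in positive characteristic. The paper explicitly flags this limitation and circumvents it: rather than specializing the endomorphism ring, it arranges that the defining polynomial $\ell(x)$ has Galois group $S_d$ over the function field of the base (this requires a separate argument, since the base is cut out by interpolation constraints and is not the full parameter space of polynomials), then applies Hilbert irreducibility so that the specialized polynomial $\ell_s(x)$ still has Galois group $S_d$ over $K$ for $s$ outside a thin set, and finally invokes Zarhin's criterion \emph{directly on each specialization} to conclude $\End((J_s)_{\kbar})=\Z$. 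Your proof can be repaired along the same lines, but as written it does not go through for general Hilbertian $K$.

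A smaller point: your independence argument (``moving a single marked point'') is correct in outline but informal; the paper instead cites Shioda's explicit computation of the rank for the Mestre--Shioda family. Also, to make your version of the Zarhin step work after repair, you would need to check that $f$ has Galois group $S_{2g+1}$ over $K(\caln)$ despite the interpolation constraints---this is the analogue of the paper's Proposition on $\ell(x)$, and it is not automatic from ``dominates the hyperelliptic locus''.
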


\begin{emp}\label{emp:l-functions}
\textbf{$L$-functions.} Let $K$ be a number field, and $J/K$ an abelian variety. Given a continuous complex representation $\rho$ of $G_K$, one can form the {twisted $L$-function} $L(J,\rho,s)$ (see \cite{rohrlich-vanishing}*{Section~2}).
The \defi{Generalized BSD Conjecture} (see \cite{rohrlich-allreps}*{Page 311, (0.1)} and \cite{rohrlich-vanishing}*{Proposition~2}) asserts that the multiplicity $m_{\rho}$ of $\rho$ in the space $J(\kbar)\otimes \C$ equals the order of vanishing $\textup{ord}_{s=1} L(J,\rho,s)$. Assuming this conjecture, \Cref{thm:all-reps} implies the following statement:

\textit{If $K$ is a number field and $\rho$ is a complex {  not necessarily irreducible} $G_K$-representation, then for any sufficiently large integer $g$, there exist infinitely many $g$-dimensional absolutely simple abelian varieties $J/K$ such that $L(J,\rho,s)$ vanishes at $s=1$.}

If $\rho$ is a self-dual representation, then the weaker \defi{Parity Conjecture}~\cite{rohrlich-allreps}*{Page 311, (0.2)} asserts that $(-1)^{m_{\rho}}$ equals the global root number $w(J,\rho)$ (see \cite{rohrlich-allreps}*{Section~4} for the definition). There is a rich literature devoted to computing formulas for these root numbers and giving (conditional) realization results for representations $\rho$ by showing that $w(J,\rho)=-1$ (e.g.~\cites{rohrlich-allreps,howe-growth,dokchitser-root-nonabelian,sabitova-twisted-root,bisatt-explicit-root}, to name but a few).
\end{emp}

\begin{emp}\label{emp:ranks-intro}
\textbf{Mordell--Weil ranks.} Our principle motivation for studying the Realization Problem lies in its connection to Mordell--Weil ranks. 
Suppose $K$ is a number field, so that $J(K)$ is a finitely generated abelian group for any abelian variety $J/K$ (Mordell--Weil Theorem). The \defi{Mordell--Weil rank} of $J(K)$ is the integer $\rk J(K) \ceq \dim_{\Q} J(K)_{\Q}$.
\begin{enumerate}[(a)]
    \item (\textit{Large rank}) \label{emp:ranks-intro:triv}
    In the case of trivial Galois action, the Realization Problem amounts to asking for abelian varieties of large rank and small dimension. Indeed, if $V = \triv_{G_K}^n$, where $n$ is a positive integer and $\triv_{G_K}$ denotes the one-dimensional \defi{trivial module}, then $J/K$ realizes $V$ if and only if $\rk J(K)\geqslant n$. 
    \item (\textit{Rank growth}) \label{emp:ranks-intro:V(L/K)}
    Realizing non-trivial Galois modules has implications for the {growth} in Mordell--Weil rank with extensions of the ground field. Indeed, given a finite extension $L/K$, let $\V{L}{K}$ denote the Galois module determined (up to isomorphism) by the identity
    \begin{equation*}
        \ind{G_K}{G_L}\triv_{G_L} \cong \triv_{G_K} \oplus \V{L}{K}.
    \end{equation*}
    If $J/K$ realizes $\V{L}{K}$, then the rank of $J$ grows in \emph{every} intermediate extension of $L/K$, i.e., $\rk J(M) > \rk J(F)$ for any pair $F$ and $M$ such that $K \subseteq F \subsetneq M \subseteq L$ (see \Cref{lem:realize-rank}).  
    \end{enumerate}
\end{emp}

\begin{emp}\label{emp:V(Omega/K)}
\textbf{The Galois module $\V{\Omega}{K}$.}
\Cref{thm:all-reps} is a corollary to  \Cref{thm:main} below, which concerns a class of Galois modules that we now define. Recall that a \defi{\fetk} $\Omega$ is a ring of the form $\Omega = L_1 \times \dotsb \times L_r$, where the $L_i$ are (not necessarily distinct) finite separable extensions of $K$. The \defi{degree} of $\Omega$ is the integer $\dim_K \Omega$. 
Associated to such an $\Omega$, we define the Galois module 
\begin{equation*}
    \V{\Omega}{K} \ceq \triv_{G_K}^{r-1}\oplus \V{L_1}{K}\oplus \dotsb \oplus \V{L_r}{K}.
\end{equation*}
For example, if $L/K$ is a finite separable extension and $\Omega = L^r$, then $\V{\Omega}{K} = \triv_{G_K}^{r-1} \oplus \V{L}{K}^r$. In particular, if $K$ is a number field, then an abelian variety $J/K$ which realizes $\V{L^r}{K}$ satisfies $\rk J(K) \geqslant r-1$ and $\rk J(L) - \rk J(K) \geqslant r$ (cf. \ref{emp:ranks-intro}).  
\end{emp}


\begin{thm}[proved in \ref{proof-main}]\label{thm:main}
Let $K$ be a Hilbertian field of characteristic different from $2$, and $\Omega$ a finite \'etale $K$-algebra of degree $n\geqslant 1$. Then, for any positive integer {  $g$ which satisfies $4g+6 \geqslant n$}, 
there exist infinitely many genus $g$ hyperelliptic curves $X/K$ such that: 
\begin{enumerate}[(a),leftmargin=1cm,itemindent=0pt]
    \item The Jacobian $J_X/K$ realizes the $(n-1)$-dimensional Galois module $\V{\Omega}{K}$. \label{thm:main:realize}
    \item If $g\neq 2$ or $\chr K\neq 3$, then $J_X/K$ is absolutely simple.\label{thm:main:simple}
\end{enumerate}
In particular, if $n\leqslant 10$, then the curves $X/K$ can be chosen to be elliptic curves (with pair-wise distinct $j$-invariants). 
\end{thm}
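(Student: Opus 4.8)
The plan is to realize $\V{\Omega}{K}$ inside the Jacobian of a hyperelliptic curve obtained by \emph{twisting} a Mestre--Shioda type construction of curves with many rational points, and then to descend from the resulting algebraic family to infinitely many members over $K$ by means of the generalized N\'eron Specialization Theorem. Since a Hilbertian field is infinite, $\Omega$ is monogenic: taking primitive elements for the factors of $\Omega=L_1\times\dots\times L_r$ and translating them by sufficiently general scalars so that their minimal polynomials become pairwise coprime, we may write $\Omega\cong K[x]/(f)$ with $f$ monic separable of degree $n$. Writing $S=\spec\Omega$ for the $\gg{K}$-set of roots of $f$ in $\kbar$, one identifies $\V{\Omega}{K}$ with $\ker(\Q[S]\to\Q)$, the reduced permutation module of the homomorphism $\gg{K}\to S_n$ encoding $\Omega$. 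It thus suffices, for each $g$ with $4g+6\geqslant n$, to construct infinitely many genus-$g$ hyperelliptic curves $X/K$ together with points $\sigma_1,\dots,\sigma_n\in X(\kbar)$ forming a $\gg{K}$-set isomorphic to $S$ whose difference classes $[\sigma_i-\sigma_1]\in J_X(\kbar)$ are $\Q$-linearly independent; for then $\sum a_i\sigma_i\mapsto\sum a_i[\sigma_i]$ defines an injective $\gg{K}$-homomorphism $\V{\Omega}{K}=\ker(\Q[S]\to\Q)\hookrightarrow J_X(\kbar)_\Q$, so that $J_X$ realizes $\V{\Omega}{K}$.

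First I would write down, over $\qbar$ (or $\Z[1/2]$), an explicit family $\pi\colon\mathcal Y\to U$ of genus-$g$ hyperelliptic curves in the Mestre--Shioda style, carrying $n$ disjoint sections $\sigma_1,\dots,\sigma_n$ permuted by a natural $S_n$-action on the base $U$ — for instance fibres $y^2=P(x)^2-Q(x)$ with $\deg P=g+1$, using Mestre's doubling (arranging $P^2-Q=R^2-S$, or exploiting the even-degree model and its two points at infinity) to fit up to $4g+6$ such sections, the inequality $4g+6\geqslant n$ being exactly the range where this works. Given $\Omega$, I would twist $(\mathcal Y,U,\{\sigma_j\})$ by the homomorphism $\gg{K}\to S_n$ above to obtain a family $\mathcal X\to U'$ over $K$ (with $U'$ still $K$-unirational, by the no-name lemma) whose $n$ marked points are now defined over the factors $L_i$ of $\Omega$ and are permuted by $\gg{K}$ exactly as $S$ is. On the generic fibre $\mathcal X_\eta/K(U')$ this yields the $\gg{K(U')}$-homomorphism above, whose image — by semisimplicity — is a submodule of $\V{\Omega}{K(U')}$, equal to all of it precisely when the $n-1$ classes $[\sigma_i-\sigma_1]$ are independent on $\mathcal X_\eta$.

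The crux is proving that independence. I would establish it by specializing the parameters to a boundary point of $U$ where $\mathcal Y$ acquires, say, $n-1$ nodes (or degenerates to a suitable semistable or reducible fibre) and computing the images of the $[\sigma_i-\sigma_1]$ either in the component group of the N\'eron model — a lattice in which these coordinates can be arranged into an explicit upper-triangular, hence nonsingular, system — or in a product of Jacobians of components; independence is then manifest there, and being a Zariski-open condition it spreads out to a dense open of $U$ and hence to the generic fibre, and then (the twist being a $\kbar$-isomorphism) to $\mathcal X_\eta$. This is the step in which the sharp range $4g+6\geqslant n$ is forced, as a Shioda--Tate-type bound on the Mordell--Weil rank of the generic fibre. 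In parallel I would compute the geometric monodromy of $\pi$ on $H^1$ and verify that it is the full symplectic group $\mathrm{Sp}_{2g}$ for every prime $\ell$; this forces $J_{\mathcal X_\eta}$ to be absolutely simple, the sole exception being the known degeneration of the construction in genus $2$ in characteristic $3$ — precisely the case set aside in \ref{thm:main:simple}. The two conditions ``$[\sigma_i-\sigma_1]$ independent'' and ``$J$ absolutely simple'' then carve out complements of thin subsets of $U'(K)$.

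Finally I would apply the generalized N\'eron Specialization Theorem together with the Hilbertianity of $K$: for all parameters in $U'(K)$ outside a thin set, the fibre $X_{\mathbf u}/K$ is smooth of genus $g$, the specialized classes $[\sigma_i-\sigma_1]$ stay independent — so $J_{X_{\mathbf u}}$ realizes $\V{\Omega}{K}$ — and $J_{X_{\mathbf u}}$ is absolutely simple whenever $g\neq 2$ or $\chr K\neq 3$. Removing one more thin set, on which a discrete isomorphism invariant of $X_{\mathbf u}$ is constant, leaves infinitely many curves that are pairwise non-isomorphic over $\kbar$; for $g=1$ this invariant is simply the (non-constant) $j$-invariant of the family, which at the same time yields the final assertion that for $n\leqslant 10$ the curves may be taken to be elliptic with pairwise distinct $j$-invariants, while for $g\geqslant 2$ non-isotriviality is already guaranteed by the full-monodromy statement. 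I expect the main obstacle to be the independence step of paragraph three: producing the $n$ marked points is comparatively soft, but forcing all $n-1$ difference classes to be \emph{simultaneously} independent — which is what makes the realization of the \emph{whole} module $\V{\Omega}{K}$, and not merely of a proper submodule, go through — is the delicate point, and it is exactly what pins down the bound $4g+6\geqslant n$.
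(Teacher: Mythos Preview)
Your overall architecture—twist the Mestre--Shioda family by the cocycle $\gg{K}\to S_n$ determined by $\Omega$, verify independence of the $n-1$ divisor classes on the generic fibre, and specialize via N\'eron plus Hilbertianity—matches the paper exactly, as does the rationality of the twisted base (the paper uses Hilbert~90 for linear $S_n$-actions, which is your ``no-name lemma''). For independence the paper does not degenerate to a nodal fibre; it simply cites Shioda's computation that $\dim_\Q W(\D_1)=n-1$ on the untwisted construction and observes that this dimension is preserved under base change to the twisted function field. Your degeneration outline is plausible but is not what is done, and would need care to execute.

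The simplicity step is where your proposal has a genuine gap. You assert that ``$J$ absolutely simple'' carves out the complement of a thin subset of $U'(K)$; in positive characteristic this is not known—the Noot--Serre specialization-of-endomorphisms result you seem to be invoking requires characteristic~$0$ (see the remark following Proposition~\ref{prop:gal=Sd-1}). The paper takes a different route: it proves (Proposition~\ref{prop:gal=Sd-1}) that the defining polynomial $\ell(x)$ of the hyperelliptic curve $y^2=\ell(x)$ retains full Galois group $S_d$ over the \emph{twisted} function field $\ff$ (the nontrivial point being that $\ff/\mm$ is a finite extension), applies classical Hilbert Irreducibility to $\ell$—not to any monodromy statement—to obtain specializations $\ell_P(x)\in K[x]$ with Galois group $S_d$, and then invokes Zarhin's criterion (Theorem~\ref{thm:zarhin-simple}) that the Jacobian of $y^2=\ell_P(x)$ has $\End_{\kbar}=\Z$. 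The exclusion $(g,\chr K)=(2,3)$ comes directly from Zarhin's hypothesis $\deg\ell\geqslant 7$ in characteristic~$3$ (here $\deg\ell=2g+2=6$ when $g=2$); your monodromy account offers no mechanism for exactly this exception, which you simply assert.
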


\begin{exmp}\label{exmp:main}
We illustrate \Cref{thm:main} in the case of elliptic curves with a few examples.
\begin{enumerate}[(a)]
    \item \label{exmp:main:rohrlich-matsuno} Rohrlich~\cite{rohrlich} proved by explicit construction that if $L/K$ is an extension of number fields with $[L:K]\leqslant 9$, then there exists an elliptic curve over $K$ which realizes $\V{L}{K}$. Matsuno~\cite{matsuno-zp} extended his construction to produce, given a \fetk of $\Omega$ of degree at most $9$, infinitely many elliptic curves which realize $\V{\Omega}{K}$ (although he states it only for a single extension $L/K$). \Cref{thm:main} improves this by allowing $\deg \Omega = 10$, and by allowing $K$ to be any Hilbertian field.
    \item \label{exmp:main:cubic} Let $L/K$ be a cubic extension of number fields. Taking $\Omega = L^3\times K$, \Cref{thm:main} gives infinitely many $E/K$ which realize $\V{\Omega}{K}= \V{L}{K}^3 \oplus \triv_{G_K}^3$, and which therefore satisfy $\rk E(K)\geqslant 3$ and $\rk E(L) - \rk E(K) \geqslant 3$. We view this as a complement to results in the literature (e.g.~\cites{dokchister-cubic,vanishing-cubic,kozuma-cubic,lemke-thorne}) which \emph{fix} the curve $E/K$ and show that the rank (algebraic and analytic) goes up in infinitely many cubic extensions $L/K$.
    \item \label{exmp:main:deg-5} The example (b) above (minus a factor of $\triv_{\gg{K}}$) can also be achieved by Matsuno's construction~\cite{matsuno-zp}. On the other hand, if $L/K$ is a degree $5$ extension of number fields, \Cref{thm:main} gives elliptic curves $E/K$ which realize $\V{L}{K}^2 \oplus \triv_{G_K}$, and hence satisfy $\rk E(L) - \rk E(K) \geqslant 2$, which is not achieved by \cite{matsuno-zp}.
    \item \label{exmp:main:A5} Let $L/\Q$ be the quintic extension obtained by adjoining a root of the polynomial $x^5 + 6x^4 + 19x^3 + 25x^2 + 11x + 2$. The splitting field of this extension has Galois group isomorphic to the alternating group $A_5$ (see~\cite{rohrlich-allreps}*{Page~312}), and $\V{L}{\Q} \otimes_{\Q} \C$ corresponds to the unique degree $4$ irreducible complex representation of $A_5$. It follows from~\cite{rohrlich-allreps}*{Proposition~B} and the Generalized BSD Conjecture (see \ref{emp:l-functions}) that for any elliptic curve $E/\Q$, $\V{L}{\Q}$ occurs in $E(\qbar)_{\Q}$ with \emph{even} multiplicity. Taking $\Omega \ceq L \times \Q^5$, \Cref{thm:main} gives infinitely many elliptic curves $E/\Q$ which realize $\V{\Omega}{\Q} = \V{L}{\Q} \oplus \triv_{G_{\Q}}^5$, and by the above discussion, these curves (conjecturally) realize the $13$-dimensional Galois module $\V{L}{\Q}^2 \oplus \triv_{G_{\Q}}^5$. 
\end{enumerate}
\end{exmp}

\begin{emp}\label{emp:V-wt-Omega}
\textbf{The Galois module $\V{\wt{\Omega}}{\Omega}$.}
Our last realization result is \Cref{thm:quad} below; for a class of Galois modules $V$, it gives abelian varieties realizing $V$ which are of smaller dimension than those given by \Cref{thm:main}. 
Given a \fetk $\Omega$ and a finite \'etale $\Omega$-algebra $\wt{\Omega}$, we define $\V{\wt{\Omega}}{\Omega}$ to be the Galois module defined up to isomorphism by the identity
\begin{equation}
    \V{\wt{\Omega}}{K} \cong \V{\wt{\Omega}}{\Omega} \oplus \V{\Omega}{K}.
\end{equation}
For example, suppose $\wt{\Omega}$ is a \defi{quadratic extension} of $\Omega\ceq L_1\times \dotsb \times L_r$, i.e., $\wt{\Omega} = C_1 \times \dotsb \times C_r$, with each $C_i = L_i^2$ or $C_i = \wt{L}_i$ for some separable quadratic extension $\wt{L}_i/L_i$. Then (see \Cref{lem:V-wt-Omega-identity})
\begin{equation}\label{eq:V-wt-Omega-identity}
    \V{\wt{\Omega}}{\Omega} = \ind{G_K}{G_{L_1}} \V{C_1}{L_1} \oplus \dotsb \oplus  \ind{G_K}{G_{L_r}} \V{C_r}{L_r}.
\end{equation}
\end{emp}
 

\begin{thm}[proved in \ref{proof-quad}]\label{thm:quad}
Let $K$ be a Hilbertian field of characteristic different from $2$, $\Omega$ a finite \'etale $K$-algebra of degree $n\geqslant 1$, and $\wt{\Omega}$ a quadratic extension of $\Omega$. 
Then, for any positive integer {  $g$ which satisfies $4g+4 \geqslant n$}, 
there exist infinitely many genus $g$ hyperelliptic curves $X/K$ such that:
\begin{enumerate}[(a)]
    \item \label{thm:quad:realize}The Jacobian $J_X/K$ realizes the $n$-dimensional Galois module $\V{\wt{\Omega}}{\Omega}$. 
    \item \label{thm:quad:simple}If $g\neq 2$ or $\chr K\neq 3$, then $J_X/K$ is absolutely simple.
\end{enumerate}
In particular, if $n\leqslant 8$, then the curves $X/K$ can be chosen to be elliptic curves (with pair-wise distinct $j$-invariants).
\end{thm}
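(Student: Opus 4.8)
The argument runs parallel to the proof of \Cref{thm:main}, realizing the module $\V{\wt{\Omega}}{\Omega}$ in place of $\V{\Omega}{K}$; the only genuinely new ingredient is the way the quadratic extension $\wt{\Omega}/\Omega$ gets encoded in the equation of the curve. Write $\Omega = L_1\times\dots\times L_r$ with $L_i = K(\theta_i)$, and recall that giving a quadratic extension $\wt{\Omega} = C_1\times\dots\times C_r$ of $\Omega$ amounts to giving a square class $\delta_i$ in each $L_i$, with $C_i = L_i(\sqrt{\delta_i})$ (so $C_i = L_i^2$ exactly when $\delta_i$ is a square). First I would construct, over a purely transcendental extension $K(\mathbf{s})$ of $K$ in auxiliary parameters $\mathbf{s}$, a genus $g$ hyperelliptic curve $X_{\mathbf{s}}\colon y^2 = f_{\mathbf{s}}(x)$ for which $f_{\mathbf{s}}(\theta_i)$ is nonzero and lies in the square class $\delta_i$ for every $i$. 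Then $(\theta_i,\sqrt{f_{\mathbf{s}}(\theta_i)})$ is a closed point of $X_{\mathbf{s}}$ with residue field $C_i$; writing $\iota$ for the hyperelliptic involution and $c_i \ceq [(\theta_i,\sqrt{f_{\mathbf{s}}(\theta_i)})] - [\iota(\theta_i,\sqrt{f_{\mathbf{s}}(\theta_i)})]$, the involution negates $c_i$ while $G_{L_i}$ acts on $\Q c_i$ through $\gal(C_i/L_i)$, that is, via the character $\V{C_i}{L_i}$. Hence the $\qgk$-module spanned by the $G_K$-conjugates of the $c_i$ inside $J_{X_{\mathbf{s}}}(\kbar(\mathbf{s}))_{\Q}$ is, by \eqref{eq:V-wt-Omega-identity}, a quotient of $\V{\wt{\Omega}}{\Omega}$, and is isomorphic to $\V{\wt{\Omega}}{\Omega}$ precisely when these $n$ conjugate classes are $\Q$-linearly independent.

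Two things then require work. The first is the existence of $f_{\mathbf{s}}$: it must have degree $2g+1$ or $2g+2$ (so that $X_{\mathbf{s}}$ has genus $g$) while realizing the prescribed square classes at the $\theta_i$, and counting the available coefficients against these constraints is what yields the hypothesis $4g+4\geqslant n$. That this is slightly more demanding than the $4g+6\geqslant n$ of \Cref{thm:main} reflects not an improvement but a cost: a trivial constituent of a Galois module is ``free'' in that it needs only a rational point, whereas a nontrivial quadratic character has to be paid for, so that here all $n$ directions of $\V{\wt{\Omega}}{\Omega}$ must be accounted for, rather than $n-1$. The second point, which I expect to be the main obstacle, is to show that for generic $\mathbf{s}$ the classes $c_i$ together with their $G_K$-conjugates are $\Q$-linearly independent in $J_{X_{\mathbf{s}}}$ over $K(\mathbf{s})$. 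Since the $\theta_i$ are fixed algebraic numbers rather than free transcendentals, one cannot appeal to genericity of the points; instead, in the spirit of the unified treatment of the symmetric constructions of Mestre--Shioda and Liu--Lorenzini referred to in the introduction, I would specialize $\mathbf{s}$ to a distinguished value at which $X_{\mathbf{s}}$ degenerates to a curve with a controlled nodal model, read off independence of the (specialized) $c_i$ from their images in the character group of the toric part of the Jacobian, and then transfer independence back to the generic fibre by injectivity of the specialization map at that value.

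Granting the $\V{\wt{\Omega}}{\Omega}$-realization on the generic fibre, the conclusion is reached exactly as for \Cref{thm:main}. The generalized N\'eron Specialization Theorem shows that the set of $\mathbf{s}_0\in K^{\dim\mathbf{s}}$ for which $J_{X_{\mathbf{s}_0}}$ still realizes $\V{\wt{\Omega}}{\Omega}$ is the complement of a thin subset, and a second thin set --- the one responsible for the hypothesis ``$g\neq 2$ or $\chr K\neq 3$'' --- removes the values at which $J_{X_{\mathbf{s}_0}}$ is not absolutely simple. As $K$ is Hilbertian these thin sets do not exhaust $K^{\dim\mathbf{s}}$, and since the family $X_{\mathbf{s}}$ is non-isotrivial, infinitely many of the surviving curves are pairwise non-isomorphic over $\kbar$; this settles both parts of the theorem. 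Finally, taking $g=1$ turns the hypothesis into $n\leqslant 8$, and a genus $1$ curve with a rational point is an elliptic curve whose $\kbar$-isomorphism class is detected by its $j$-invariant, which yields the last assertion.
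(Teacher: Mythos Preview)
Your outline diverges from the paper's argument and, as written, the first of your two ``things that require work'' does not go through by the heuristic you give. You propose to find $f_{\mathbf s}$ of degree $2g+1$ or $2g+2$ with $f_{\mathbf s}(\theta_i)$ in a prescribed square class of each $L_i$, and you justify the bound $4g+4\geqslant n$ by ``counting coefficients against constraints.'' But $f_{\mathbf s}$ has at most $2g+3$ coefficients while the conditions are $n$ in number (one per $\kbar$-conjugate of the $\theta_i$) and multiplicative rather than linear; naive parameter counting gives $n\leqslant 2g+3$, not $n\leqslant 4g+4$. The factor of two is not free: in the paper it comes from the algebraic identity $m(x)=h(x)^2-\ell(x)$ of \Cref{lem:sqroot}, which forces $u\ell(u)=u\,h(u)^2$ at every root $u$ of $m$, so that on $\X_2:y^2=x\ell(x)$ the square class at $u$ is that of $u$ itself. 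This is what allows a degree-$(d{+}1)$ polynomial to carry $2d+2$ prescribed square classes, and without an analogue of this identity your construction of $f_{\mathbf s}$ is not actually given.

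The paper's proof bypasses both of your hard steps by invoking the machinery built in \S\S\ref{sec:twisting}--\ref{sec:twisting-MS}. One does not fix algebraic $\theta_i$; instead the Mestre--Shioda construction $\scrc_2=(\wt{\kk},(\X_2)_{\wt{\kk}},(\D_2)_{\wt{\kk}},\ul{Q})$ over the generic base carries a symmetry by $\wt G=\mu_2\wr S_n$ (\Cref{prop:wreath-gal-isom}), and \Cref{prop:symmetry-scrc} produces a cocycle $a\in\Hom(\glk,\wt G)$ with $(\prsup{a}{(K^{2n})},\prsup{a}{(K^n)})=(\wt\Omega,\Omega)$. Twisting gives a rational function field $\wt\ff$ and an isomorphism $W((\D_2)_{\wt\ff})\cong\V{\wt\Omega}{\Omega}$ (\Cref{prop:twist-MS}); here the needed independence is not argued by degeneration but imported directly from Shioda's computation $\dim_\Q W(\D_2)=n$ (\Cref{thm:shioda-dim}) via the dimension-preservation in \Cref{prop:twisting-C}\ref{prop:twisting-C-isom-dim}. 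Specialization and Zarhin's criterion in the form \Cref{thm:zarhin-simple}\ref{item:zarhin:b} (for curves $y^2=(x-a)f(x)$, matching the shape $y^2=x\ell(x)$) then finish exactly as you describe in your last paragraph.
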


\begin{exmp}\label{exmp:matsuno-Z2}
We again illustrate \Cref{thm:quad} in the case of elliptic curves with a few examples. For simplicity, we fix extensions of number fields $\wt{L}/L/K$, with $\wt{L}/L$ quadratic. To make the translation to Mordell--Weil ranks, we note (see \Cref{lem:realize-rank}) that if an abelian variety $J/K$ realizes $\ind{G_K}{G_L} \V{\wt{L}}{L}$, then $\rk J(\wt{L}) > \rk J(L)$. 
\begin{enumerate}[(a)]
    \item If $[L:K]\leqslant 5$, {  so that $[\wt{L}:K] \leqslant 10$,} then \Cref{thm:main} yields elliptic curves $E/K$ which realize $\V{\wt{L}}{K}$, and hence, also its submodule $\ind{G_K}{G_L} \V{\wt{L}}{L}$ (cf. \Cref{lem:realize-rank}). If $[L:K]=6,7,$ or $8$, then \Cref{thm:quad} gives elliptic curves which realize $\ind{G_K}{G_L} \V{\wt{L}}{L}$, a result which is not achieved by \Cref{thm:main}. As a special case, if $K=\Q$, and $L$ and $\wt{L}$ denote the maximal real subfield of $\Q(\zeta_{32})$ and $\Q(\zeta_{64})$, respectively, we recover a result of Matsuno~\cite{matsunoz2}*{Theorem~1.1} which gives infinitely many elliptic curves $E/\Q$ with $\rk E(\wt{L}) > \rk E(L)$. 
    \item Suppose $L/K$ is a quartic extension. Taking $\Omega \ceq L^2$ and $\wt{\Omega} \ceq \wt{L}^2$, \Cref{thm:quad} gives infinitely many elliptic curves $E/K$ which realize $(\ind{G_K}{G_L} \V{\wt{L}}{L})^2$, and hence, satisfy $\rk E(\wt{L}) - \rk E(L) \geqslant 2$. 
\end{enumerate}
\end{exmp}

\begin{emp}
\textbf{Acknowledgements.} This work is part of the author's doctoral dissertation at the University of Georgia. The author is indebted to his PhD advisor, Dino Lorenzini, for his thoughtful and useful feedback on earlier versions of the paper. The author is grateful to Daniel Litt for helping clarify some of the ideas in \Cref{sec:twisting}. The author would also like to acknowledge and thank the mathematics department at UGA for providing an environment conducive to research.
\end{emp}

\section{Outline of method and paper}

In this section, we outline the method underlying our results, and along the way we describe the layout of the paper. 

\begin{emp}\label{emp:conventions}
\textbf{Conventions.}
Throughout this article, $K$ denotes an infinite field of characteristic different from $2$. For a $K$-scheme $X$ (resp. $K$-algebra $R$) and field extension $L/K$, we write $X_L$ (resp. $R_L$) to denote the base-change $X \times_K L$ (resp. tensor product $R \otimes_K L$). By a \defi{$K$-variety} we mean a geometrically integral, separated scheme of finite type over $K$. By a \defi{nice curve over $K$}, we mean a smooth, projective, $K$-variety of dimension $1$. We frequently conflate  effective divisors on a nice curve $X$ with proper, closed subschemes of $X$. 

By a \defi{hyperelliptic curve} over $K$, we mean a nice curve $X/K$ which admits a degree $2$ $K$-morphism to the projective line $\p^1_K$\footnote{Note that, contrary to the convention in the literature, we do not make any assumption on the genus of $X$.}. Given a non-constant polynomial $\ell(x) \in K[x]$ which is not a square in $K[x]$, we write $X/K: \, y^2 = \ell(x)$ to mean that $X/K$ is the smooth proper model of the affine curve defined by the above equation.

We denote tuples of indeterminates by boldfont, e.g. $\mbf{p}$ denotes a tuple $p_1,\dotsc,p_n$; in this case, we denote the affine space $\spec K[\mbf{p}]$ by $\A_K(\mbf{p})$, and if $K$ is fixed, by $\A(\mbf{p})$. 
\end{emp}


\begin{emp}\label{emp:outline-single}
\textbf{Basic strategy.} 
Given a nice curve $X/K$ and effective divisor $D \subset X$, we associate a Galois submodule $\W{D} \subset J_X(\kbar)_{\Q}$, which we regard as the ``contribution'' to $J_X(\kbar)_{\Q}$ made by $D$:
\begin{equation*}
    \W{D} \ceq \textup{span}_{\Q} \{ [P - Q] \mid P,Q \in D({\kbar})\} \subset J_X(\kbar)_{\Q}.
\end{equation*}
If we choose some point $O \in D({\kbar})$ as a base-point for the Abel-Jacobi map $j:X_{\kbar} \inj (J_X)_{\kbar}$, then $\W{D}$ is the $\qgk$-submodule of $J_X(\kbar)_{\Q}$ spanned by the points in  $j(D({\kbar})) \subset J_X({\kbar})$.

Now, in order to realize a given Galois module $V$ in an abelian variety over $K$, we first choose a \fetk $\Omega$ such that $V \subset \V{\Omega}{K}$ (see \Cref{lem:any V in V(A/K)}). Then, we construct a nice curve $X/K$ of small (positive) genus and an effective divisor $D\subset X$ such that 
\begin{enumerate}[(i)]
    \item $D$ is a \defi{divisor of type $\Omega$}, i.e., $D \cong \spec \Omega$, and
    \item $\dim_{\Q} \W{D} = n - 1$ (here $n=\deg \Omega$), i.e.,  writing $D(\kbar) = \{O,P_1,\dotsc,P_{n-1}\}$, the divisor classes $[P_i - O], i=1,\dotsc, n - 1$ are linearly independent in $J(\kbar)$. 
\end{enumerate}
The Galois module $\W{D}$ is then isomorphic to $\V{\Omega}{K}$ (see \Cref{lem:V(D/K)} \ref{lem:V(D/K):surj}), so $J_X/K$ realizes $\V{\Omega}{K}$, and hence, also $V$.
\end{emp}

\begin{emp}\label{emp:outline-spec}
\textbf{Specialization.} To apply the strategy \ref{emp:outline-single} for an infinite family of curves, we proceed by specialization. That is, we take a rational function field $\ff/K$ and construct a hyperelliptic curve $\X'/\ff$ of small (positive) genus equipped with an effective divisor $\D'$ of type $\Omega_{\ff} \ceq \Omega \otimes_K \ff$ such that $W(\D')\cong \V{\Omega}{K}$ as  $\qgk$-modules\footnote{To make sense of this statement, we need to first view $W(\D')$ as a $\qgk$-module; this is explained in \ref{emp:specialization-intro}.}. Then, we appeal to  \Cref{lem:specialization}, a simple extension of the classical Specialization Theorem of N\'eron, to conclude that $\X'/\ff$ can be specialized down to an infinite family of curves over $K$ with Jacobian realizing $V$. Finally, we use a criterion of Zarhin~\cite{zarhinSimple} (see \Cref{sec:zarhin}) to obtain the desired simplicity statements.
\end{emp}

\begin{emp}\label{emp:LLC}
\textbf{The Liu--Lorenzini Construction.} 
We will apply the specialization method to the construction of Liu--Lorenzini in~\cite{newpts}, which proceeds as follows. Suppose we are given a positive integer $g$ and a \fetk $\Omega$ of degree $n \ceq 4g+6$. Let $\{\alpha_1,\dotsc,\alpha_n\}$ be a $K$-basis for $\Omega$, and let $\mbf{z}$ denote the tuple $z_1,\dotsc,z_n$. Identify $\A(\mbf{z}) \ceq \spec K[\mbf{z}]$ with the \defi{Weil Restriction} $\res{\Omega/K} \A^1_{\Omega}$ by associating to each $K$-point $t \ceq (t_1,\dotsc,t_n)$ of $\A(\mbf{z})$ the element $\alpha_t \ceq \sum_{i=1}^n t_i\alpha_i  \in \Omega$. That is, $\alpha_t$ is the specialization at $t$ of the element
\begin{equation*}
    \alpha \ceq \sum_{i=1}^n z_i\alpha_i \in \Omega[\mbf{z}].
\end{equation*}
Since $\Omega[\mbf{z}]/K[\mbf{z}]$ is \'etale of degree $n$, $\alpha$ admits a characteristic polynomial $\chi(x) = x^n + f_{n-1}(\mbf{z})x^{n-1} + \dotsb + f_0(\mbf{z}) \in K[\mbf{z}]$. The coefficients $m_i \ceq f_i(\mbf{z})$, for $i=0,\dotsc,n-1$, turn out to be algebraically independent over $K$, and the inclusion $K[\mbf{m}] \subset K[\mbf{z}]$ defines a finite $K$-morphism of affine spaces 
\begin{equation*}
    \chi : \res{\Omega/K} \A^1_{\Omega} \too \A(\mbf{m}).
\end{equation*}
Associating to each point  $(a_0,\dotsc,a_{n-1}) \in \A(\mbf{m})$ the polynomial $x^n + a_{n-1}x^{n-1} + \dotsb + a_0$, the map $\chi$ acquires a functorial interpretation-- on $K$-points it gives the \defi{characteristic polynomial map} for $\Omega/K$, sending each $\alpha \in \Omega$ to its characteristic polynomial $\chi_{\alpha}(x)\in K[x]$. Thus, $\chi$ coincides with the composition of the morphisms $f$ and $\mu$ defined in \cite{newpts}*{2.8}, the construction of which lies at the heart of their method.

Now, let $\ff/\mm$ denote the function field extension determined by $\chi$ (i.e., $\ff = K(\mbf{z})$ and $\mm = K(\mbf{m})$), and view $m(x) = x^n + m_{n-1}x^{n-1} + \dotsb + m_0$ as a polynomial in $\mm[x]$. The fundamental algebraic lemma (\Cref{lem:sqroot}) furnishes polynomials $h(x)$ and $\ell(x) \in \mm[x]$, with $\ell(x)$ separable of degree $2g+2$, such that $m(x) = h(x)^2 - \ell(x)$. Since $m(x)$ is the characteristic polynomial of $\alpha$ (viewed as an element of $\Omega_{\ff}$), we have $m(\alpha) = 0$, and hence, $$h(\alpha)^2 = \ell(\alpha) \in \Omega_{\ff}.$$ Thus, $\X/\mm: y^2 = \ell(x)$ is a genus $g$ hyperelliptic curve endowed with a closed immersion $\spec \Omega_{\ff} \inj \X_{\ff}$ which in affine coordinates is the point $(\alpha,h(\alpha)) \in \X(\Omega_{\ff})$. In fact, if we put $\D \ceq \divzero{y-h(x)} \subset \X$, then $\D_{\ff}$ the image of this closed immersion, so $\D_{\ff}$ is a divisor of type $\Omega_{\ff}$ on $\X_{\ff}$. It turns out that $W(\D_{\ff}) \cong \V{\Omega}{K}$ as a $\qgk$-module; we explain this below. 
\end{emp}

\begin{emp}\label{emp:MSC}
\textbf{The Mestre--Shioda Construction.}
Mestre ~\cites{mestre11,mestre12} used \Cref{lem:sqroot} to construct genus $g$ curves with $n \ceq 4g+6$ points as follows. Let $K[\mbf{m}] \inj K[\mbf{u}] \ceq K[u_1,\dotsc,u_n]$ be the injection which gives the factorization $m(x) = (x-u_1) \dotsb (x-u_n) \in K[\mbf{u}][x]$. The associated $K$-morphism $$\chi':\A(\mbf{u}) \too \A(\mbf{m})$$ is a quotient for the natural permutation action of the symmetric group $S_n$ on $\A(\mbf{u})$. Putting $\kk \ceq K(\mbf{u})$, we find that the genus $g$ curve $\X/\mm$ from \ref{emp:LLC} has the points $P_i \ceq (u_i,h(u_i)) \in \X(\kk)$, for $i=1,\dotsc,n$. In fact, $\D_{\kk} \subset \X_{\kk}$ is the union of these points, and Shioda~\cite{shiodasymmetry} showed (in the terminology of \ref{emp:outline-single}) that $\dim_{\Q} W(\D_{\kk}) = n-1$.   

The key point now (forming the basis for this paper) is that the construction in \ref{emp:LLC} is transformed, upon extending scalars from $K$ to the splitting field $L$ of $\Omega$, into the construction of Mestre--Shioda. To see this, let $\{e_1,\dotsc,e_n\}$ denote the standard $K$-basis for $K^n$, and observe that if view $\A(\mbf{u})$ as the Weil Restriction $\res{K^n/K} \A^1_{K^n}$ (as in \ref{emp:LLC}) using the element $$\alpha' \ceq \sum_{i=1}^n u_ie_i \in K^n[\mbf{u}],$$ then $\chi'$ can be understood as the characteristic polynomial map for $K^n/K$. Thus, choosing any $L$-algebra isomorphism $\phi: \Omega_L \iso L^n$, we can define an $L$-isomorphism 
\begin{equation*}
    \Phi: (\res{\Omega/K} \A^1_{\Omega})_L \iso \A^n_L(\mbf{u})
\end{equation*}
which gives $\phi$ on $L$-valued points. By construction, $\Phi$ is a $\A^n_L(\mbf{m})$-morphism, so the isomorphism $\kk_L \iso \ff_L$ induced by $\Phi$ is an $\mm_L$-algebra isomorphism. Thus, it extends to an isomorphism $\X(\kk_L) \iso \X(\ff_L)$ sending the points $(u_i,h(u_i))$ to the geometric points in the support of $\D_{\ff} \subset \X_{\ff}$. Shioda's result then implies that  $W(\D_{\ff}) \cong \V{\Omega}{K}$, as desired. 
\end{emp}



\begin{emp}\label{emp:outline-layout}
\textbf{Twisting constructions.}
The work~\cite{newpts} contains, to our knowledge, the only method for systematically constructing curves $X/K$  with $\Omega$-valued points for a given $\Omega/K$. The main novelty (if any) in the present article is that, with a view towards generalizing~\cite{newpts} and its connection to the Mestre--Shioda Construction, we develop a way to \emph{twist} constructions of curves with many rational points to get curves with $\Omega$-valued points (see~\Cref{prop:twisting-C}). This approach evinces the observation that producing curves with $\Omega$-valued points is \emph{geometrically} the same as producing curves with $(\deg \Omega)$-many rational points. As proof-of-concept, we give a unified treatment of the Mestre--Shioda and Liu--Lorenzini constructions; we show that the latter is obtained by twisting the former (see \ref{emp:coordinatize}).  In \Cref{sec:moduli}, we discuss the connection between the method and twisted forms of the moduli space $M_{g,n}$. 
\end{emp}

\begin{rem}\label{rem:elliptic}
As we mentioned in \Cref{exmp:main} \ref{exmp:main:rohrlich-matsuno}, \Cref{thm:main} extends the results of Rohrlich and Matsuno to allow realization of $\V{\Omega}{K}$ in elliptic curves when $\deg \Omega = 10$. It is natural to ask about the next step, namely, given $\Omega/K$ of degree $11$, can one produce an elliptic curve $E/K$ which realizes $\V{\Omega}{K}$? The twisting procedure of this article could, in principle, be applied to known families of elliptic curves with rank $\geqslant 10$ to realize $\V{\Omega}{K}$ with $\deg \Omega = 11$. However, to our knowledge, the existing constructions in the literature do not admit enough symmetry (cf. \ref{emp:symmetry}) to yield elliptic curves containing a degree $11$ closed point. We explain this limitation in greater detail in \Cref{exmp:shioda-11}, and we formulate a related question about the moduli space $M_{1,11}$ in \Cref{quest:M1-11}.
\end{rem}

\section{Preliminaries}\label{sec:prelims}



\begin{emp}\label{emp:G-sets}
\textbf{$G$-sets.}  Let $G$ be a profinite group. By a \defi{$G$-set}, we mean a discrete set $S$ equipped with a continuous action of $G$. We write $\Q[S]$ to denote the $\qg$-module $\oplus_{s \in S} \,\Q \cdot s$ generated by $S$.\footnote{All $\qg$-modules considered in this article will be \emph{left} $\qg$-modules.} We write $\triv_G$ for the one-dimensional trivial $\qg$-module. There is a natural \defi{degree map} $\deg : \Q[S] \too \triv_{G}$, defined by  $\sum_{s \in S} a_s \cdot s \mtoo \sum_{s \in S} a_s$, whose kernel we denote by $\Q[S]_0$. When $S$ is a finite set, there is a decomposition  $\Q[S] \cong \Q[S]_0 \oplus \triv_G$ (Maschke's Theorem). 

If $D$ is an effective divisor on a curve $X/K$, then $D(\kbar)$ is a $G_K$-set. The degree map $\Q[D(\kbar)] \too \triv_{G_K}$ is induced by the usual degree map on divisors, and we have
\begin{equation*}
    \Q[D(\kbar)]_0 \ceq \textup{span}_{\Q}\{ P- Q \mid P,Q \in D(\kbar)\} \, \subset (\Div^0 X_{\kbar}) \otimes_\Z {\Q}.
\end{equation*}
\end{emp}

\begin{emp}\label{emp:fetk}
\textbf{Divisors of type $\Omega$.}
Let $\Omega$ be a finite \'etale $K$-algebra. We say that a field extension $L/K$ \defi{splits} $\Omega/K$ if $\Omega_L$ is isomorphic to $L^n$ as an $L$-algebra; the smallest such field is the \defi{splitting field} of $\Omega/K$. We write $\Et(n,L/K)$ for the set of degree $n$ \'etale $K$-algebras (up to $K$-isomorphism) split by $L/K$.  
We call an effective divisor $D$ on a curve $X/K$ a divisor of \defi{of type $\Omega$} if $D \cong \spec \Omega$. In this case, if $L/K$ splits $D/K$ (i.e., it splits $\Omega/K$), then we have $D_L \cong \sqcup_{i=1}^n (\spec L)$, i.e., $D_L$ consists of $n$ $L$-rational points of $X_L$ (here $n=\deg D = \deg \Omega$).

Recall from \ref{emp:V(Omega/K)} and \ref{emp:outline-single} the Galois modules $\V{\Omega}{K}$ and $\W{D}$, respectively.
\end{emp}

\begin{lem}\label{lem:V(D/K)}
If $D$ is a divisor of type $\Omega$ on a nice curve $X/K$, then 
\begin{enumerate}[(a),itemindent=0pt,leftmargin=1cm]
    \item \label{lem:V(D/K):Omega}$\Q[D(\kbar)]_0 \cong \V{\Omega}{K}$, and
    \item \label{lem:V(D/K):surj} the natural map $E \mapsto [E]$ defines a surjection of Galois modules $\V{\Omega}{K} \surj \W{D}$, which is an isomorphism if  $\dim_{\Q}  \W{D} = \deg D - 1.$
\end{enumerate}
\end{lem}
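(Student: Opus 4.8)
The proof is a bookkeeping exercise that translates the definition of $\V{\Omega}{K}$ into the structure of the $G_K$-set $D(\kbar)$, together with the functoriality of the Abel--Jacobi map.

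For part \ref{lem:V(D/K):Omega}, I would first write $\Omega = L_1 \times \dotsb \times L_r$ with each $L_i/K$ a finite separable field extension, so that $n = \deg\Omega = \sum_i [L_i:K]$. Since $D\cong \spec\Omega$ and $\kbar$ is a field, a $K$-algebra homomorphism $\Omega\to\kbar$ kills all but one factor, so $D(\kbar) = \Hom_{K\text{-alg}}(\Omega,\kbar) = \bigsqcup_{i=1}^r \Hom_{K\text{-alg}}(L_i,\kbar)$ as $G_K$-sets, and (fixing one embedding of each $L_i$ to choose a base point) each summand is $G_K$-isomorphic to the coset space $G_K/G_{L_i}$. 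Hence $\Q[D(\kbar)] \cong \bigoplus_{i=1}^r \ind{G_K}{G_{L_i}}\triv_{G_{L_i}}$, which by the defining property of $\V{L_i}{K}$ is $\bigoplus_{i=1}^r \bigl(\triv_{G_K}\oplus \V{L_i}{K}\bigr)$. Now $\Q[D(\kbar)]$ is semisimple, since the $G_K$-action factors through a finite quotient and $\Q$ has characteristic zero (Maschke's Theorem), and no $\V{L_i}{K}$ contains a copy of $\triv_{G_K}$ (by Frobenius reciprocity, $\triv_{G_K}$ occurs in $\ind{G_K}{G_{L_i}}\triv_{G_{L_i}}$ with multiplicity one, and $\V{L_i}{K}$ is its complement). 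Thus the $\triv_{G_K}$-isotypic component of $\Q[D(\kbar)]$ is exactly $\triv_{G_K}^r$. Because the degree map $\Q[D(\kbar)]\to\triv_{G_K}$ of \ref{emp:G-sets} is a surjection of semisimple $\qgk$-modules, $\Q[D(\kbar)]\cong \Q[D(\kbar)]_0 \oplus \triv_{G_K}$; comparing the multiplicity of each irreducible constituent on both sides yields $\Q[D(\kbar)]_0 \cong \triv_{G_K}^{r-1}\oplus\bigoplus_i \V{L_i}{K} = \V{\Omega}{K}$. In particular $\dim_{\Q}\V{\Omega}{K} = (r-1) + \sum_i([L_i:K]-1) = n-1$.

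For part \ref{lem:V(D/K):surj}, recall from \ref{emp:G-sets} that $\Q[D(\kbar)]_0 \subset (\Div^0 X_{\kbar})\otimes_{\Z}\Q$. Composing this inclusion with the $\Q$-linear extension of the ($G_K$-equivariant) divisor-class map $\Div^0 X_{\kbar}\to \pic^0 X_{\kbar} = J_X(\kbar)$ gives a homomorphism of $\qgk$-modules $\Q[D(\kbar)]_0\to J_X(\kbar)_{\Q}$, namely $E\mapsto [E]$. Writing a general element of $\Q[D(\kbar)]_0$ as $\sum_P a_P\cdot P$ with $\sum_P a_P = 0$ and rewriting it as $\sum_P a_P(P-O)$ for a fixed $O\in D(\kbar)$, one sees that the image of this map is $\textup{span}_{\Q}\{[P-Q]\mid P,Q\in D(\kbar)\} = \W{D}$. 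Combined with part \ref{lem:V(D/K):Omega}, this produces the asserted surjection $\V{\Omega}{K}\cong \Q[D(\kbar)]_0 \surj \W{D}$. Finally, if $\dim_{\Q}\W{D} = \deg D - 1 = n-1 = \dim_{\Q}\V{\Omega}{K}$, then this is a surjection between $\Q$-vector spaces of equal finite dimension, hence an isomorphism.

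I do not expect a genuine obstacle here; the only points that need care are the identification of $D(\kbar)$ with $\bigsqcup_i G_K/G_{L_i}$ as $G_K$-sets (so that $\Q[D(\kbar)]$ is literally assembled from the inductions appearing in the definition of $\V{\Omega}{K}$) and the appeal to semisimplicity to pin down $\Q[D(\kbar)]_0$ as the kernel of the degree map. Everything else reduces to the dimension count $\dim_{\Q}\V{\Omega}{K} = n-1$ and the elementary fact that a surjection of equidimensional vector spaces is bijective.
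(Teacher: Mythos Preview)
Your proof is correct and follows essentially the same approach as the paper: decompose $D(\kbar)$ as the $G_K$-set $\bigsqcup_i G_K/G_{L_i}$, identify $\Q[D(\kbar)]$ with $\bigoplus_i \ind{G_K}{G_{L_i}}\triv_{G_{L_i}}$, and cancel one copy of $\triv_{G_K}$ using semisimplicity to get $\Q[D(\kbar)]_0\cong \V{\Omega}{K}$; part~(b) then follows from the definitions and a dimension count. The paper's writeup is slightly terser (it cancels $\triv_{G_K}$ directly from both sides without spelling out the multiplicity argument), but the content is the same.
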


\begin{proof}
Part \ref{lem:V(D/K):surj} follows from \ref{lem:V(D/K):Omega} and the definitions of $\Q[D(\kbar)]_0$ and $\W{D}$. For \ref{lem:V(D/K):Omega}, write $\Omega = L_1 \times \dotsb \times L_r$, so that $D \cong \spec L_1 \sqcup \dotsb \sqcup \spec L_r$. The natural isomorphism of $G_K$-sets
\begin{equation*}
    D(\kbar) \cong \bigsqcup_{i=1}^r (\spec L_i)(\kbar) \cong \bigsqcup_{i=1}^r G_K/G_{L_i}
\end{equation*}
gives rise to an isomorphism of Galois modules (in the notation of \ref{emp:G-sets}):
\begin{align*}
    \triv_{G_K} \oplus \Q[D(\kbar)]_0 & \cong \Q[D({\kbar})]\\
     & \cong \Q[G_K/G_{L_1}] \oplus \dotsb \oplus \Q[G_K/G_{L_r}]\\
     & \cong \ind{G_K}{G_{L_1}} \triv_{G_{L_1}} \oplus \dotsb \oplus \ind{G_K}{G_{L_r}} \triv_{G_{L_r}}\\
     & \cong (\triv_{G_K} \oplus \V{L_1}{K}) \oplus \dotsb \oplus (\triv_{G_K}\oplus \V{L_r}{K})\\
     & \cong \triv_{G_K} \oplus \V{\Omega}{K}.
\end{align*}
It follows that $\Q[D(\kbar)]_0 \cong \V{\Omega}{K}$. 
\end{proof}

The \defi{splitting field} of a Galois module $V$ is the fixed field of the kernel of the associated representation $\rho_V : G_K \too GL(V)$. The splitting field of $\V{\Omega}{K}$ coincides with the splitting field of $\Omega/K$. For a finite Galois extension $L/K$, we will often regard a $\Q \glk$-module $V$ as a $\qgk$-module by conflating $V$ with the inflation $\textup{Infl}_{\gg{L/K}}^{G_K} V$.

\begin{lem}\label{lem:any V in V(A/K)}
Suppose $V$ is a Galois module with splitting field $L \supsetneq K$, which is a direct sum of $r$ irreducible submodules. Put $\Omega \ceq L^r$. Then, $V$ is isomorphic to a submodule of $\V{\Omega}{K}$.
\end{lem}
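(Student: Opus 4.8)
The plan is to reduce the statement to the fact that every irreducible representation of a finite group occurs in its regular representation, and then to do the bookkeeping of trivial summands carefully enough to produce only $r-1$ (rather than $r$) copies of $\triv_{G_K}$.

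First I would fix the structure. Since $L$ is the splitting field of $V$, the kernel of $\rho_V$ is an open normal subgroup of $G_K$ with fixed field $L$, so $L/K$ is finite Galois and $V$ is (the inflation of) a faithful $\Q\glk$-module. The key input is the identity $\Q[G_K/G_L] = \ind{G_K}{G_L}\triv_{G_L} \cong \triv_{G_K}\oplus\V{L}{K}$ (the defining property of $\V{L}{K}$, see \ref{emp:ranks-intro}), together with the observation that $\Q[G_K/G_L]$, viewed as a $\qgk$-module, is the inflation to $G_K$ of the left regular representation $\Q[\glk]$ of the finite group $\glk$. Since $\Q[\glk]$ is semisimple (Maschke), and since for any irreducible $\Q\glk$-module $W$ and any nonzero $w\in W$ the $\Q\glk$-map $\Q[\glk]\to W$, $x\mapsto xw$, is a surjection and hence splits, every irreducible $\Q\glk$-module embeds into $\Q[\glk]\cong\triv_{G_K}\oplus\V{L}{K}$.

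Next I would run the counting argument on the given decomposition $V = W_1\oplus\dots\oplus W_r$ into irreducible submodules. Let $T\subseteq\{1,\dots,r\}$ be the set of $i$ with $W_i\cong\triv_{G_K}$; since the splitting field $L\supsetneq K$ forces $V$ to be nontrivial, at least one $W_i$ is nontrivial, so $|T|\leqslant r-1$. For $i\notin T$, the module $W_i$ is a nontrivial irreducible, so the composite of an embedding $W_i\hookrightarrow\triv_{G_K}\oplus\V{L}{K}$ with the projection onto $\triv_{G_K}$ vanishes — a nonzero map from a simple module to $\triv_{G_K}$ would exhibit $\triv_{G_K}$ as a quotient, hence, by semisimplicity, as a submodule of $W_i$, forcing $W_i\cong\triv_{G_K}$ — so that embedding factors through $\V{L}{K}$, i.e. $W_i\hookrightarrow\V{L}{K}$. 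Summing these embeddings over $i\notin T$ and using $\bigoplus_{i\in T}W_i\cong\triv_{G_K}^{|T|}\hookrightarrow\triv_{G_K}^{r-1}$, we obtain
\[ V \;\hookrightarrow\; \triv_{G_K}^{r-1}\oplus\V{L}{K}^{\,r-|T|}\;\hookrightarrow\;\triv_{G_K}^{r-1}\oplus\V{L}{K}^{\,r}\;=\;\V{L^r}{K}\;=\;\V{\Omega}{K}. \]

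There is no serious obstacle here; the only point deserving attention is the saving of a single copy of $\triv_{G_K}$, which is exactly where the hypothesis $L\supsetneq K$ enters: it guarantees that one of the $W_i$ is nontrivial and hence can be absorbed into a copy of $\V{L}{K}$ instead of requiring its own trivial summand. Everything else is the standard decomposition of $\Q[G_K/G_L]$ into $\triv_{G_K}\oplus\V{L}{K}$ and the classical fact that over a characteristic-zero field every irreducible representation of a finite group appears in the regular representation.
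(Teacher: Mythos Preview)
Your proof is correct and follows essentially the same approach as the paper: both arguments identify $\ind{G_K}{G_L}\triv_{G_L}$ with the regular $\Q\glk$-module to embed each nontrivial irreducible constituent into $\V{L}{K}$, and then use $L\neq K$ to ensure at most $r-1$ trivial summands appear. Your version spells out a bit more explicitly why a nontrivial irreducible lands inside $\V{L}{K}$ rather than merely inside $\triv_{G_K}\oplus\V{L}{K}$, but the argument is the same.
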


\begin{proof}
Put $G \ceq \gal(L/K)$. Since $\ind{G_L}{G_K} \triv_{G_L} = \triv_{G_K} \oplus \V{L}{K}$ is a free $\qg$-module of rank $1$, every non-trivial irreducible $\qg$-module is isomorphic to a submodule of $\V{L}{K}$. Thus, we can write $V = V_1 \oplus \dotsb \oplus V_k \oplus \triv_{G_K}^{r-k}$, for some $k\leqslant r$, and with each $V_i$ a submodule of $\V{L}{K}$. Note that the assumption $L\neq K$ forces $k\geqslant 1$. We have $\V{\Omega}{K} \cong \V{L}{K}^r \oplus \triv_{G_K}^{r-1}$, and it is clear that $V$ is a submodule of $\V{\Omega}{K}$.
\end{proof}

Recall from \ref{emp:V-wt-Omega} that, if $\Omega$ is a \fetk and $\wt{\Omega}$ is a finite \'etale $\Omega$-algebra, then $\V{\wt{\Omega}}{\Omega}$ denotes the Galois module defined by the identity $\V{\wt{\Omega}}{K} \cong \V{\wt{\Omega}}{\Omega} \oplus \V{\Omega}{K}$.

\begin{lem}\label{lem:V-wt-Omega-identity}
Let $\Omega\ceq L_1 \times \dotsb \times L_r$ be a \fetk of degree $n$, and let $\wt{\Omega}$ be a finite \'etale $\Omega$-algebra of degree $e$. Write $\wt{\Omega} = C_1 \times \dotsb \times C_r$, with each $C_i$ a finite \'etale $L_i$-algebra of degree $e$. 
Then, the Galois module $\V{\wt{\Omega}}{\Omega}$ is of dimension $en-n$ and satisfies
\begin{equation*}
    \V{\wt{\Omega}}{\Omega} \cong \ind{G_K}{G_{L_1}} \V{C_1}{L_1} \oplus \dotsb \oplus  \ind{G_K}{G_{L_r}} \V{C_r}{L_r}.
\end{equation*}
\end{lem}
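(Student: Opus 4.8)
The plan is to reduce the statement to the single-field case and then assemble the pieces using the behavior of $\V{-}{-}$ under the chain of field extensions, together with transitivity of induction. First I would recall that for any finite separable extension $F/E$, the defining identity reads $\ind{G_E}{G_F}\triv_{G_F} \cong \triv_{G_E}\oplus\V{F}{E}$, and more generally, for a finite \'etale $E$-algebra $A = F_1\times\dotsb\times F_s$, one has $\Q[\spec A(\kbar)] \cong \bigoplus_i \ind{G_E}{G_{F_i}}\triv_{G_{F_i}} \cong \triv_{G_E}^{\,s}\oplus\bigoplus_i\V{F_i}{E}$, which is exactly the computation already carried out inside the proof of \Cref{lem:V(D/K)}. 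In particular $\V{A}{E}$ (defined as $\triv_{G_E}^{s-1}\oplus\bigoplus_i\V{F_i}{E}$ in \ref{emp:V(Omega/K)}) is the unique complement to $\triv_{G_E}$ in $\Q[\spec A(\kbar)]$, so it is characterized by $\Q[\spec A(\kbar)]\cong\triv_{G_E}\oplus\V{A}{E}$.

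Next I would treat the single-field case: let $L/K$ be a finite separable extension and $C$ a finite \'etale $L$-algebra of degree $e$, and show $\V{C}{L}$ (a priori a Galois module defined by $\V{C}{K}\cong\V{C}{L}\oplus\V{L}{K}$) satisfies $\V{C}{L}\cong\ind{G_K}{G_L}\V{C}{L}'$, where $\V{C}{L}'$ is the ``relative'' module built inside $\Q G_L$-modules, i.e. the complement of $\triv_{G_L}$ in $\Q[\spec C(\kbar)]$ viewed over $L$. The computation: $C$ is also a finite \'etale $K$-algebra of degree $en$ (where $n=[L:K]$) with $\spec C(\kbar) \cong \bigsqcup G_K/G_{M_j}$ for the factor fields $M_j$ of $C$; on the other hand, as a $G_L$-set, $\spec C(\kbar)$ is the disjoint union of the $G_L$-orbits $G_L/G_{M_j}$, so $\Q[\spec C(\kbar)] \cong \ind{G_K}{G_L}\big(\Q[\spec C(\kbar)]\text{ as a }\Q G_L\text{-module}\big) \cong \ind{G_K}{G_L}(\triv_{G_L}\oplus\V{C}{L}')$. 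Using $\ind{G_K}{G_L}\triv_{G_L}\cong\triv_{G_K}\oplus\V{L}{K}$, this gives $\Q[\spec C(\kbar)]\cong\triv_{G_K}\oplus\V{L}{K}\oplus\ind{G_K}{G_L}\V{C}{L}'$, hence by the characterization above $\V{C}{K}\cong\V{L}{K}\oplus\ind{G_K}{G_L}\V{C}{L}'$; comparing with the defining identity of $\V{C}{L}$ and invoking Krull--Schmidt/Maschke uniqueness yields $\V{C}{L}\cong\ind{G_K}{G_L}\V{C}{L}'$. (This also pins down the dimension: $\dim\V{C}{L} = en - n$.)

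Finally I would globalize. Write $\wt\Omega = C_1\times\dotsb\times C_r$ with $C_i$ \'etale over $L_i$ of degree $e$; then $\spec\wt\Omega(\kbar) = \bigsqcup_i \spec C_i(\kbar)$ as $G_K$-sets, so $\Q[\spec\wt\Omega(\kbar)] \cong \bigoplus_i \Q[\spec C_i(\kbar)] \cong \bigoplus_i\big(\triv_{G_K}\oplus\V{L_i}{K}\oplus\ind{G_K}{G_{L_i}}\V{C_i}{L_i}'\big)$ by the single-field case applied to each $i$. Grouping the first two summands over all $i$ gives $\triv_{G_K}\oplus\V{\Omega}{K}$ (using the defining decomposition of $\V{\Omega}{K}$ in \ref{emp:V(Omega/K)}), so $\Q[\spec\wt\Omega(\kbar)]\cong\triv_{G_K}\oplus\V{\Omega}{K}\oplus\bigoplus_i\ind{G_K}{G_{L_i}}\V{C_i}{L_i}'$; on the other hand it is $\cong\triv_{G_K}\oplus\V{\wt\Omega}{K}$, so $\V{\wt\Omega}{K}\cong\V{\Omega}{K}\oplus\bigoplus_i\ind{G_K}{G_{L_i}}\V{C_i}{L_i}'$, and comparing with the defining identity $\V{\wt\Omega}{K}\cong\V{\wt\Omega}{\Omega}\oplus\V{\Omega}{K}$ and cancelling (Maschke, so all modules are semisimple and cancellation is legitimate) gives the claimed formula, with the $\V{C_i}{L_i}$ on the right-hand side read as the relative modules $\V{C_i}{L_i}'$. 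The dimension count $en-n$ follows by summing. The only subtlety — and the main point to be careful about — is the bookkeeping of the two potentially different meanings of ``$\V{C_i}{L_i}$'': the one defined over $K$ via subtraction and the intrinsic one over $L_i$; the single-field step is exactly what reconciles them, so I would state that identification explicitly (perhaps as a preliminary remark) before doing the $r$-fold version.
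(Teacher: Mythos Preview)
Your proof is correct and takes essentially the same approach as the paper: both compute $\triv_{G_K}\oplus\V{\wt\Omega}{K}$ by writing $\wt\Omega$ as a product of fields $M_{ij}$, applying transitivity of induction through $G_{L_i}$, and then collecting terms to peel off $\triv_{G_K}\oplus\V{\Omega}{K}$. The paper carries this out in a single chain of isomorphisms rather than isolating a single-field lemma, and your notational caution about two readings of $\V{C_i}{L_i}$ is harmless but unnecessary here, since the presence of $\ind{G_K}{G_{L_i}}$ on the right-hand side already forces the intrinsic $G_{L_i}$-module interpretation.
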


\begin{proof}
For each $i=1,\dotsc,r$, we write $C_i$ as a product of fields $C_i = M_{i1} \times \dotsb \times M_{is_i}$, so that $\wt{\Omega} = \prod_{i=1}^r \prod_{j=1}^{s_i} M_{ij}$. Then, the desired isomorphism follows from the computation:
\begin{align*}
    \triv_{G_K} \oplus \V{\wt{\Omega}}{K} & \cong \bigoplus_{i=1}^r \bigoplus_{j=1}^{s_i} \ind{G_K}{G_{M_{ij}}} \triv_{G_{M_{ij}}}\\
    & \cong \bigoplus_{i=1}^r \ind{G_K}{G_{L_i}} \bigg( \bigoplus_{j=1}^{s_i} \ind{G_{L_i}}{G_{M_{ij}}} \triv_{G_{M_{ij}}} \bigg) \\
    & \cong \bigoplus_{i=1}^r \ind{G_K}{G_{L_i}} \big( \triv_{G_{L_i}} \oplus \V{C_i}{L_i} \big)\\
    & \cong \bigoplus_{i=1}^r \ind{G_K}{G_{L_i}} \triv_{G_{L_i}} \, \bigoplus_{i=1}^r \ind{G_K}{G_{L_i}} \V{C_i}{L_i}\\
    & \cong \triv_{G_K} \oplus \V{\Omega}{K} \oplus \bigoplus_{i=1}^r \ind{G_K}{G_{L_i}} \V{C_i}{L_i}.\qedhere
\end{align*}
\end{proof}

\begin{lem}\label{lem:realize-rank}
Let $L/K$ be a finite, separable extension, and $J/K$ an abelian variety.
\begin{enumerate}[(a),itemindent=0pt,leftmargin=1cm]
    \item \label{lem:realize-rank:sum}Suppose $F$ is an intermediate field in $L/K$ and $V_1,\dotsc,V_r$ are non-zero $\Q G_K$-submodules of $\ind{G_K}{G_F} \V{L}{F}$. If $J/K$ realizes $V_1 \oplus \dotsb \oplus V_r$, then $\rk J(L) - \rk J(F) \geqslant r$.
    \item \label{lem:realize-rank:grow}If $J$ realizes $\V{L}{K}$, then $\rk J(M) > \rk J(F)$ for any pair of fields $M$ and $F$ such that $K \subseteq F \subsetneq M \subseteq L$.
\end{enumerate}
\end{lem}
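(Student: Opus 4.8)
The plan is to reduce both parts to finite-group representation theory by means of the standard translation between Mordell--Weil ranks and multiplicities of Galois modules in $J(\kbar)_{\Q}$. First I would choose a finite Galois extension $L'/K$ splitting every module in sight (the Galois closure of $L/K$ suffices), set $\bar G\ceq\gal(L'/K)$, and note that any $\qgk$-submodule of $J(\kbar)_{\Q}$ isomorphic to a module split by $L'$ must lie inside the finite-dimensional \emph{semisimple} $\Q\bar G$-module $J(L')_{\Q}$; thus ``$J$ realizes $V$'' means simply that $J(L')_{\Q}$ contains a submodule isomorphic to $V$. The identity I want to record is that, for any intermediate field $K\subseteq F\subseteq L'$,
\[
  \rk J(F)=\dim_{\Q}\Hom_{\qgk}\!\big(\ind{G_K}{G_F}\triv_{G_F},\,J(\kbar)_{\Q}\big),
\]
which follows from Galois descent for points of an abelian variety together with Frobenius reciprocity. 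Writing $m_J(W)\ceq\dim_{\Q}\Hom_{\qgk}(W,J(\kbar)_{\Q})$ — a quantity additive on direct sums — and using the defining identity $\ind{G_K}{G_F}\triv_{G_F}\cong\triv_{G_K}\oplus\V{F}{K}$ together with transitivity of induction, $\ind{G_K}{G_L}\triv_{G_L}\cong\ind{G_K}{G_F}\triv_{G_F}\oplus\ind{G_K}{G_F}\V{L}{F}$, one obtains the master formula
\[
  \rk J(L)-\rk J(F)=m_J\!\big(\ind{G_K}{G_F}\V{L}{F}\big).
\]

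For part (a), set $W\ceq\ind{G_K}{G_F}\V{L}{F}$. By semisimplicity every nonzero submodule $V_i\subseteq W$ is also a quotient of $W$, so $\Hom_{\qgk}(W,V_i)\neq 0$; additivity then gives $\dim_{\Q}\Hom_{\qgk}\big(W,\bigoplus_{i=1}^{r}V_i\big)\geqslant r$, and the assumed embedding $\bigoplus_i V_i\hookrightarrow J(\kbar)_{\Q}$ induces, by left-exactness of $\Hom_{\qgk}(W,-)$, an injection on Hom-spaces; hence $m_J(W)\geqslant r$, which by the master formula gives $\rk J(L)-\rk J(F)\geqslant r$.

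For part (b), applying the same transitivity manipulation to $F\subseteq M\subseteq L$ shows that $\ind{G_K}{G_F}\V{M}{F}$ is a direct summand of $\V{L}{K}$, and it is nonzero since $\dim_{\Q}\V{M}{F}=[M:F]-1\geqslant 1$ because $F\subsetneq M$. Therefore $J$, which realizes $\V{L}{K}$, also realizes $\ind{G_K}{G_F}\V{M}{F}$, and part (a) applied to the extension $M/K$, the intermediate field $F$, and the single submodule $V_1\ceq\ind{G_K}{G_F}\V{M}{F}$ yields $\rk J(M)-\rk J(F)\geqslant 1$, i.e.\ $\rk J(M)>\rk J(F)$.

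The only step requiring genuine care — everything else being formal manipulation in a semisimple category — is the rank--multiplicity identity, namely that forming $G_F$-invariants commutes with $-\otimes_{\Z}\Q$ on the (not finitely generated) group $J(\kbar)$; this is dealt with by observing that every point of $J(\kbar)$ lies in $J(L'')$ for some finite $L''/K$ and hence has finite $G_F$-orbit, which reduces the statement to finitely generated $G_F$-stable subgroups where it is immediate.
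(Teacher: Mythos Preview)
Your proof is correct. The underlying representation-theoretic ingredients are the same as the paper's (semisimplicity, transitivity of induction, and the decomposition of $\V{L}{K}$), but the packaging differs. You derive and use the clean ``master formula'' $\rk J(L)-\rk J(F)=m_J\big(\ind{G_K}{G_F}\V{L}{F}\big)$ with $m_J(W)=\dim_{\Q}\Hom_{\qgk}(W,J(\kbar)_{\Q})$, and then bound $m_J$ from below via left-exactness of $\Hom$. The paper instead argues concretely: it produces, for each $i$, an explicit vector $v_i\in V_i$ fixed by $G_L$ (as the image of the trivial coset under a surjection $\Q[G_K/G_L]\twoheadrightarrow V_i$), and then notes that no nonzero vector of $\ind{G_K}{G_F}\V{L}{F}$ is $G_F$-fixed, so the $v_i$ witness $\rk J(L)-\rk J(F)\geqslant r$ directly. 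For part~(b) both arguments use the same decomposition of $\V{L}{K}$ to isolate $\ind{G_K}{G_F}\V{M}{F}$ and then invoke part~(a). Your Hom-functor approach is a bit more systematic and makes the bookkeeping cleaner; the paper's explicit-vector approach is more hands-on but otherwise equivalent in content.
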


\begin{proof}
\begin{enumerate}[(a)]
    \item The key point is that $\rk J(L) > \rk J(F)$ if and only if there is a vector $v \in J(L)_{\Q}$ which is not fixed by $G_F$, or equivalently, a vector $v \in J(\kbar)_{\Q}$ which is fixed by $G_L$ but \emph{not} by $G_F$. Since $G_F$ acts non-trivially on any non-zero vector of $\ind{G_K}{G_F} \V{L}{F}$, it suffices to produce vectors $v_i \in V_i,\, i=1,\dotsc,r$, which are fixed by $G_L$.
    To that end, fix $i \in \{1,\dotsc,r\}$, and choose a $\qgk$-module surjection $W\ceq \ind{G_K}{G_L} \triv_{G_L} \surj V_i$; such a surjection exists because $V_i$ is a submodule of $W$ (see the proof of (b) below), and hence also a quotient of $W$. Note that $W\cong \Q[G_K/G_L]$, and the trivial coset is a vector in $\Q[G_K/G_L]$ whose stabilizer is precisely $G_L$. The image $v_i$ of this vector in $V_i$ is fixed by $G_L$, as desired. 
    \item Given intermediate fields $K \subset F \subsetneq M \subset L$, we have an isomorphism of Galois modules 
    \begin{equation*}
        \ind{G_K}{G_L} \triv_{G_L} \cong \ind{G_K}{G_M} \V{L}{M} \oplus \ind{G_K}{G_F} \V{M}{F} \oplus \V{F}{K} \oplus \triv_{G_K}.
    \end{equation*}
    Since the left-hand side is isomorphic to $\triv_{G_K} \oplus \V{L}{K}$, we have an isomorphism
    \begin{equation*}
        \V{L}{K} \cong \ind{G_K}{G_M} \V{L}{M} \oplus \ind{G_K}{G_F} \V{M}{F} \oplus \V{F}{K},
    \end{equation*}
    and part \ref{lem:realize-rank:grow} now follows from \ref{lem:realize-rank:sum}. \qedhere
\end{enumerate}
\end{proof}

\section{The specialization method for Galois modules}\label{sec:specialization}
\begin{emp}\label{emp:specialization-intro}
In this section, $L/K$ denotes a finite Galois extension, and $\ff/K$ denotes a \defi{function field}, i.e., the function field of a positive-dimensional $K$-variety. We write $\ff_L$ for the field $\ff \otimes_K L$. For any abelian variety $\J/\ff$, the group $\J(\ff_L)$ is a $G_{\ff_L/\ff}$-module, and via the canonical isomorphism between $\glk$ and $G_{\ff_L/\ff}$, we view $\J(\ff_L)$ as a $\glk$-module. 
We say that $\J/\ff$ realizes a given $\Q \glk$-module $V$ if $\J(\ff_{L})_{\Q}$ contains a $\Q \glk$-submodule isomorphic to $V$.

The main result of this section is \Cref{lem:specialization}, which says that an abelian variety over $\ff$ which realizes a $\Q \glk$-module $V$ can, under certain standard hypotheses, be specialized to give abelian varieties over $K$ which realize $V$. The proof consists in reducing to the case of the trivial module $V=\triv_{G_K}^n$, which is nothing but the classical Specialization Theorem of N\'eron~\cite{neronhilbert}*{Th\'eor\`eme~6} (stated in \Cref{thm:neron}). This reduction step relies on \Cref{lem:thin-over-ext}.
\end{emp}

\begin{emp}\label{emp:thin-sets}
\textbf{Thin sets.} Recall from~\cite{serremw}*{Page~121, Section~9.5} or~\cite{CTrankjump}*{Section 2} that for a $K$-variety $U$, a subset $T \subset U(K)$ is \defi{thin in $U$} if there is a $K$-scheme $X$ and a  generically finite $K$-morphism $f:X \too U$, admitting no rational section $U \DashedArrow X$, such that $f(X(K)) \supset T$. 
We say that $K$ is \defi{Hilbertian} if $U(K)$ is not thin for  any $K$-rational variety $U$.  It is known (see~\cite{serremw}*{Section~9.5, Page~129}) that 
(1) for an arbitrary field $F$, the function field $F(t)$ is Hilbertian, and (2) any finitely generated field is Hilbertian. 
\end{emp}

\begin{emp}\label{emp:specialization}
\textbf{Specialization.} Let $U$ be a smooth $K$-variety with function field $\ff$, and let $\calj/U$ be an abelian scheme with generic fiber $\J/\ff$. For a rational point $P \in U(K)$, the fiber $\calj_P/K$ is called the \defi{specialization} of $\J$ at $P$.
Let $\eta:\spec \ff \too U$ be the inclusion of the generic point, and let $P \in U(K)$ be a rational point. Given a section $s:U \too \calj$, we can base-change along $\eta$ to get a point $s_{\eta} : \spec \ff \too \J$, and similarly, along $P$ to get a point $s_P:\spec K \too \calj_P$.\footnote{Taking these base-changes corresponds to intersecting the image of $s$ with $\J \subset \calj$ and $\calj_P \subset \calj$, respectively.} The maps $(-)_{\eta}:\calj(U)\too \J(\ff)$ and $(-)_P:\calj(U) \too \calj_P(K)$ are group homomorphisms, and in fact, $(-)_{\eta}$ is an isomorphism by~\cite{movinglemma}*{Proposition~6.2}. The \defi{specialization map} $\sigma_P:\J(\ff) \too \calj_P(K)$ is the composition
\begin{equation*}
    \begin{tikzcd}
    \J(\ff) \arrow{r}{{(-)_{\eta}\inv}}[swap]{\sim} & \calj(U) \arrow[r, "(-)_{P}"] & \calj_P(K).
    \end{tikzcd}
\end{equation*}
In this situation, N\'eron's Specialization Theorem says the following.
\end{emp}

\begin{thm}[{\cite{serremw}*{Section~11.1}}] \label{thm:neron}
Assume that the group $\J(\ff)$ is finitely generated. Then, the set $\{\, P \in U(K) \mid  \sigma_P : \J(\ff) \longrightarrow \calj_P(K) \textup{ is not injective } \}$ is thin in $U$. 
\end{thm}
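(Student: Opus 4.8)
The plan is to reproduce N\'eron's original argument, showing the stated set is contained in a \emph{finite} union of thin sets: one batch coming from torsion sections, one from the total spaces of multiplication-by-$\ell$ covers. Using the isomorphism $(-)_\eta\colon\calj(U)\iso\J(\ff)$ of \ref{emp:specialization}, regard every $x\in\J(\ff)$ as a genuine section $s_x\in\calj(U)$ of $\calj\to U$, so that $\sigma_P$ is ``evaluate at $P$''. Fix a splitting $\J(\ff)=\Gamma\oplus T$ with $T\ceq\J(\ff)_{\textup{tors}}$ finite and $\Gamma$ free of rank $r$ with basis $e_1,\dotsc,e_r$, and set $m\ceq\lvert T\rvert$. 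If $\ker\sigma_P\neq 0$, pick $0\neq x=\gamma+t\in\ker\sigma_P$ ($\gamma\in\Gamma$, $t\in T$): either $\gamma=0$, so $t=x$ is a nonzero torsion element with $\sigma_P(t)=0$, or $\gamma\neq 0$, so $m\gamma=mx$ is a nonzero element of $\Gamma$ with $\sigma_P(m\gamma)=0$. Thus the set in question lies in $\textup{Bad}_{\textup{tors}}\cup\textup{Bad}_\Gamma$, where
\[
  \textup{Bad}_{\textup{tors}}\ceq\bigcup_{t\in T\setminus\{0\}}\{\,P\in U(K)\mid\sigma_P(t)=0\,\},\qquad\textup{Bad}_\Gamma\ceq\{\,P\in U(K)\mid\sigma_P|_\Gamma\text{ not injective}\,\},
\]
and it suffices to show each is thin in $U$.

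For $\textup{Bad}_{\textup{tors}}$: this is a finite union, and for fixed $t\neq 0$ the section $s_t$ differs from the zero section (their common generic fibre would equal $t$), so $\{P\mid\sigma_P(t)=0\}$ is (the set of $K$-points of) a proper closed subscheme of $U$, hence thin.

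For $\textup{Bad}_\Gamma$: fix a prime $\ell\neq\chr K$, so that $[\ell]\colon\calj\to\calj$ is a finite \'etale isogeny. For each $\mbf n=(n_1,\dotsc,n_r)$ in the \emph{finite} set $S\ceq\{0,1,\dotsc,\ell-1\}^r\setminus\{0\}$, let $s_{\mbf n}\ceq\sum_i n_i s_{e_i}\in\calj(U)$ and let $f_{\mbf n}\colon W_{\mbf n}\to U$ be the pullback of $[\ell]$ along the section $s_{\mbf n}\colon U\to\calj$; thus $W_{\mbf n}\to U$ is finite \'etale, in particular generically finite. Reading off $W_{\mbf n}$ as a fibre of $[\ell]$: for $P\in U(K)$, $W_{\mbf n}$ has a $K$-point above $P$ $\iff$ $s_{\mbf n}(P)\in\ell\,\calj_P(K)$; and (taking the generic point) $W_{\mbf n}\to U$ has a rational section $\iff$ $\sum_i n_i e_i\in\ell\,\J(\ff)$, which is impossible since $\mbf n\neq 0$ forces some $n_i\in\{1,\dotsc,\ell-1\}$, so $\sum_i n_i e_i$ is not $\ell$-divisible in $\J(\ff)=\Gamma\oplus T$. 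Now if $P\in\textup{Bad}_\Gamma$, say $\sum_i a_i\sigma_P(e_i)=0$ with $(a_i)\in\Z^r$ primitive, then reducing the $a_i$ modulo $\ell$ into $\{0,\dotsc,\ell-1\}$ produces some $\mbf n\in S$ (nonzero because $\gcd(a_i)=1$) with $s_{\mbf n}(P)=\sum_i n_i\sigma_P(e_i)\in\ell\,\calj_P(K)$, so $P\in f_{\mbf n}(W_{\mbf n}(K))$. Hence $\textup{Bad}_\Gamma\subseteq\bigcup_{\mbf n\in S}f_{\mbf n}(W_{\mbf n}(K))$, a finite union of thin sets (by the definition in \ref{emp:thin-sets}), hence thin. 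This completes the proof.

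The only genuinely non-formal step --- and the one I would be most careful with --- is the reduction from a $\Z$-linear relation among the points $\sigma_P(e_1),\dotsc,\sigma_P(e_r)$ to an $\F_\ell$-linear one for a \emph{single} fixed prime $\ell$, using primitivity of the coefficient vector. This is what turns an a priori countable family of conditions (one per primitive integer relation) into a finite one, and hence what makes the bad locus \emph{thin} rather than a mere countable union of thin sets. The other ingredients --- the isomorphism $(-)_\eta$, finite \'etaleness of $[\ell]$ when $\ell\neq\chr K$, and the identification of the $[\ell]$-covers with the $\ell$-divisibility condition --- are routine.
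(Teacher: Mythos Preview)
The paper does not give its own proof of this theorem: it is simply quoted from \cite{serremw}*{Section~11.1} and used as a black box. Your sketch is a correct rendition of the standard N\'eron argument found there (split off the torsion, then for the free part reduce a primitive integer relation modulo a single prime $\ell\neq\chr K$ and cover the bad locus by the finitely many $[\ell]$-pullback covers $W_{\mbf n}\to U$), so there is nothing to compare.
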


\begin{lem}[{\cite{serremw}*{Page~128}}]\label{lem:thin-over-ext}
If a subset $T \subset {U}_L(L)$ is thin in ${U}_L$, then $T \cap U(K)$ is thin in ${U}$.
\end{lem}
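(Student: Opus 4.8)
The plan is to use Weil restriction to transport a witness of thinness from $U_L$ back down to $U$, exploiting that the $L$-points of an $L$-scheme $Y$ are precisely the $K$-points of $\res{L/K}Y$. The statement is vacuous when $L=K$, so assume $L\supsetneq K$, and identify $U(K)$ with a subset of $U_L(L)=U(L)$ via base change, so that $T\cap U(K)$ is meaningful. Unwinding the definition of thinness, I would start from an $L$-scheme $Y$ together with a generically finite $L$-morphism $g\colon Y\to U_L$ having no rational section and satisfying $g(Y(L))\supseteq T$. A routine preliminary reduction — cover $U$ by affine opens, using that thin subsets of an open subvariety are thin in the ambient variety, and that finite unions of thin sets are thin — lets me assume that $U$, $U_L$, and $Y$ are quasi-projective, so that all the Weil restrictions below exist.

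Next I would form $Q\ceq\res{L/K}(U_L)$, the induced $K$-morphism $\res{L/K}(g)\colon\res{L/K}(Y)\to Q$, and the unit of adjunction $\iota\colon U\to Q$, and then set $X\ceq\res{L/K}(Y)\times_{Q}U$ with $f\colon X\to U$ the projection. Two properties of $\iota$ do all the work, and I would check them carefully: on $K$-points, $\iota$ is the base-change inclusion $U(K)\hookrightarrow U(L)=Q(K)$ (under $Q(K)=U_L(L)=U(L)$); and — using that $U$ is geometrically integral, so that $U_L$ is integral with $K(U_L)=K(U)\otimes_KL$ — the generic point of $U$ is carried by $\iota$ to the generic point of $U_L$ (via the identification $\Hom_K(-,Q)=\Hom_L((-)_L,U_L)$). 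Granting the first property, taking $K$-points commutes with fibre products, so $X(K)=(\res{L/K}Y)(K)\times_{Q(K)}U(K)=Y(L)\times_{U(L)}U(K)$, with $f$ the second projection; hence $f(X(K))=\{x\in U(K):x_L\in g(Y(L))\}\supseteq\{x\in U(K):x_L\in T\}=T\cap U(K)$, which is the required containment.

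It then remains to verify that $f$ is a legitimate witness of thinness over $U$, i.e.\ that it is generically finite and has no rational section. For the latter, a rational section of $f$ would, after composing with the first projection, give a rational map $U\dashrightarrow\res{L/K}(Y)$ lying over $\iota$; evaluating at the generic point of $U$ and applying the adjunction (legitimate because $U_L$ is integral) turns it into a $K(U_L)$-point of $Y$ over the generic point of $U_L$, that is, a rational section of $g$ — contradicting the hypothesis. For generic finiteness I would compute the generic fibre of $f$: it is $\res{L/K}(Y)\times_{Q}\spec K(U)$, and since $\iota$ sends the generic point of $U$ to that of $U_L$ and Weil restriction commutes with base change, this fibre is the Weil restriction along the finite field extension $K(U_L)/K(U)$ of the generic fibre $Y_\eta\to\spec K(U_L)$ of $g$; as $Y_\eta$ is $\spec$ of a finite $K(U_L)$-algebra, its Weil restriction is $\spec$ of a finite $K(U)$-algebra, so $f$ is generically finite. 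The same computation handles the degenerate shape of witness in which $g$ is a closed immersion onto a proper closed subscheme of $U_L$ (the positive-codimension part of a thin set): there $Y_\eta=\emptyset$, so the generic fibre of $f$ is empty as well.

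The main obstacle I anticipate is entirely technical rather than conceptual: ensuring the relevant Weil restrictions exist (hence the reduction to the quasi-projective situation) and being scrupulous with the two formal inputs — the compatibility of Weil restriction with base change, and the identification of the unit $\iota$ on the generic point. Once those are in hand the argument is short; its essential content is simply that $(\res{L/K}Y)(K)=Y(L)$, so a thin witness over $U_L$ pulls back to one over $U$.
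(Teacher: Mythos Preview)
The paper does not supply its own proof of this lemma; it simply records the statement with a citation to Serre's \emph{Lectures on the Mordell--Weil Theorem}. Your Weil restriction argument is correct and is essentially the standard one found there: form $X=\res{L/K}(Y)\times_{\res{L/K}(U_L)}U$ and check that $f\colon X\to U$ is a witness of thinness for $T\cap U(K)$. Since $L/K$ is Galois in the present setting, your computation of the generic fibre of $f$ is transparent after base change to $L$, where $X_L$ becomes the fibre product over $U_L$ of the Galois conjugates $Y^{\sigma}$ of $Y$, visibly generically finite over $U_L$; and your adjunction argument for ``no rational section'' is clean. The preliminary reduction to quasi-projective $U$ and $Y$ (so that the Weil restrictions exist as schemes) and the closure of thin sets under finite union are routine and valid.
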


\begin{lem}[Specialization for Galois modules]\label{lem:specialization}
Suppose $U$ is a smooth $K$-variety with function field $\ff/K$, and $\calj /{U}$ is an abelian scheme with generic fiber $\J/\ff$. Assume that the group $\J(\ff_L)$ is finitely generated. 
\begin{enumerate}[(a),itemindent=0pt,leftmargin=1cm]
    \item For any point $P \in {U}(K)$, regarded as a point of ${U}_L(L)$, the specialization map $\sigma_P: \J(\ff_L) \too \calj_P(L)$ is a $\glk$-module homomorphism.\label{lem:specialization:galois}
    \item The set $T \ceq \{\, P \in {U}(K) \mid \sigma_P : \J(\ff_L) \too \calj_P(L) \textup{ is not injective } \}$
    is thin in $U$.\label{lem:specialization:thin}
\end{enumerate}
\end{lem}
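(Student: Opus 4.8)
The plan is to reduce both parts to N\'eron's Specialization Theorem (\Cref{thm:neron}) applied \emph{over the field $L$}, and then to descend the resulting thin set via \Cref{lem:thin-over-ext}. The first point to record is that, since $U$ is a $K$-variety (hence geometrically integral) and $L/K$ is finite separable, the base-change $U_L$ is a smooth $L$-variety whose function field is $\ff_L = \ff \otimes_K L$; in particular $\ff_L$ is a field and $G_{\ff_L/\ff}$ is canonically identified with $\glk$. Moreover $\calj_L \ceq \calj \times_U U_L$ is an abelian scheme over $U_L$ with generic fiber $\J_L = \J \times_{\ff} \ff_L$, and $\J_L(\ff_L) = \J(\ff_L)$ is finitely generated by hypothesis, so \Cref{thm:neron} is applicable to $\calj_L/U_L$.

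For part \ref{lem:specialization:galois}, I would make the $\glk$-action explicit on each of the three groups entering the definition of $\sigma_P$ (see \ref{emp:specialization}). The group $\glk \cong G_{\ff_L/\ff}$ acts on $U_L$ over $U$, hence on the group of sections $\calj(U_L)$, on $\J(\ff_L) = \J_L(\ff_L)$ through the scalars $\ff_L/\ff$, and --- since a point $P \in U(K)$, viewed inside $U_L(L)$, is $\glk$-fixed and has fiber $(\calj_L)_P = (\calj_P)_L$ --- on $\calj_P(L) = (\calj_P)_L(L)$ in the natural way. Now the isomorphism $(-)_{\eta_L}:\calj(U_L) \iso \J(\ff_L)$ of \cite{movinglemma}*{Proposition~6.2} and the map $(-)_P:\calj(U_L) \to \calj_P(L)$ are each obtained by base-change along the $K$-morphisms $\eta:\spec\ff \to U$ and $P:\spec K \to U$, so both commute with the $\glk$-action; hence their composite $\sigma_P = (-)_P \circ (-)_{\eta_L}^{-1}$ is a homomorphism of $\glk$-modules.

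For part \ref{lem:specialization:thin}, I would first record the routine compatibility that, for $P \in U(K)$, the map $\sigma_P:\J(\ff_L) \to \calj_P(L)$ of the statement coincides --- under the identifications above --- with the N\'eron specialization map of the abelian scheme $\calj_L/U_L$ at the point $P \in U_L(L)$. Granting this,
\[
  T \;=\; \bigl\{\, Q \in U_L(L) \ \bigm|\ \sigma_Q : \J_L(\ff_L) \to (\calj_L)_Q(L) \textup{ is not injective} \,\bigr\}\ \cap\ U(K).
\]
The set in braces is thin in $U_L$ by \Cref{thm:neron} applied over $L$ (using that $\J_L(\ff_L)$ is finitely generated), so \Cref{lem:thin-over-ext} shows that its intersection with $U(K)$, namely $T$, is thin in $U$.

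I expect the only real friction to lie in part \ref{lem:specialization:galois}: writing down the $\glk$-actions on $\calj(U_L)$, $\J(\ff_L)$ and $\calj_P(L)$ precisely enough to verify that the two base-change maps are $\glk$-equivariant --- and, relatedly, the notation-heavy but conceptually trivial check in part \ref{lem:specialization:thin} that the specialization map is compatible with base-change from $K$ to $L$. None of this is deep; all of the substance is contained in \Cref{thm:neron} and \Cref{lem:thin-over-ext}.
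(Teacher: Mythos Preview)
Your proposal is correct and follows essentially the same approach as the paper: part \ref{lem:specialization:thin} is identical (apply \Cref{thm:neron} over $L$ to get a thin set $T' \subset U_L(L)$, then intersect with $U(K)$ via \Cref{lem:thin-over-ext}), and for part \ref{lem:specialization:galois} the paper carries out exactly the equivariance check you outline, just phrased as an explicit diagram showing $\sigma_P(s\circ g^*) = \sigma_P(s)\circ g^*$ rather than invoking ``base-change along a $K$-morphism commutes with $\glk$.''
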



\begin{proof}
\begin{enumerate}[(a)]
    \item We use the natural $\Z \glk$-isomorphism $\calj(U_L) = \J(\ff_L)$ to identify points $s : \spec \ff_L \too \J$ with sections $s: U_L \too \calj$. Let $g\in \glk$, and let $g^*:\spec L \iso \spec L$ be the associated $K$-isomorphism. Given a point $P \in U(K)$, we have a commutative diagrams (of $U$-schemes on the left, and of $K$-schemes on the right):
    \begin{equation*}
        \begin{tikzcd}
        U_L \arrow[dr, "s \circ g^*"] \arrow[dd, "{(\id , g^*)}",swap] & & &\spec L \arrow[dr, "\sigma_P(s\circ g^*)"] \arrow[dd, "g^*",swap] \\
        &  \calj & & &  \calj_P\\
        U_L \arrow[ur, "s"'] & & & \spec L \arrow[ur, "\sigma_P(s)"'].  
        \end{tikzcd}
    \end{equation*}
    The triangle on the right is the base-change of the triangle on the left via $P:\spec K \too U$, so we have $\sigma_P(s \circ g^*) = \sigma_P(s)\circ g^*\, \in \textup{Mor}_K(\spec L, \calj_P)$, which establishes \ref{lem:specialization:galois}.
    \item Let $T' \ceq \{ P \in U_L(L) \mid \sigma_P : \J(\ff_L) \too \calj_P(L) \textup{ is not injective} \},$ so that $T = T' \cap U(K)$. \Cref{thm:neron} says that $T'$ is thin in $U_L$, and \Cref{lem:thin-over-ext} then implies that $T$ is thin in $U$.\qedhere
\end{enumerate}
\end{proof}


\begin{rem}\label{rem:matsuno shioda}
We are aware of two antecedents in the literature in which authors have constructed given \emph{non-trivial} Galois modules in abelian varieties by specialization. The first is the construction of Rohrlich--Matsuno~\cite{matsuno-zp} (see \Cref{exmp:main} \ref{exmp:main:rohrlich-matsuno}). 
The second is the beautiful construction of Shioda~\cites{shioda-family,shiodaMW}, in which he used his theory of Mordell--Weil lattices of elliptic surfaces to show that if $L/\Q$ is a Galois extension with group isomorphic to the Weyl group $W(E_6),W(E_7),$ or $W(E_8)$, then there exist infinitely many elliptic curves $E/\Q$ which realize a certain $8$-dimensional irreducible $\Q \glk$-module. This case lies outside the scope of Theorems \ref{thm:main} and \ref{thm:quad}.

Both of these constructions rely on Silverman's strengthening~\cite{silverman-specialization} of the N\'eron Specialization theorem; this replaces the ingredient \Cref{lem:thin-over-ext} to yield the specialization outcome of \Cref{lem:specialization} in the case when $K$ is a number field and the base of the abelian scheme is an open subset of $\p^1_K$. We remark also that this strategy was noted by Rohrlich in~\cite{rohrlich-family}.
\end{rem}

\section{A brief review of Galois cohomology}

\begin{emp}\label{emp:galois-coh-intro}
In this section, we review some basic facts from (non-abelian) Galois cohomology, largely following~\cite{serregaloiscoh}*{Chapters~I.5} and~\cite{Qpoints}*{Sections~4.4 and 4.5}. If $g$ is an automorphism of a ring $R$, we write $g^*$ for the induced automorphism of $\spec R$, and we identify $\aut R$ with $\aut(\spec R)$ via the natural isomorphism $g \mtoo (g^*)\inv$.
By an \defi{action} of a group $G$ on a scheme $X$, we mean a homomorphism $s:G \too \aut(X)$. If $X$ and $X'$ are schemes endowed with $G$-actions $s$ and $s'$, respectively, then we call a morphism $f:X \too X'$ \defi{$G$-equivariant} if $s'(g) \circ f = f\circ s(g)$ for all $g\in G$, i.e., $f$ commutes with the $G$-actions on $X$ and $X'$. 

Throughout, we fix a finite Galois extension $L/K$. For any scheme $X/K$, the natural $\glk$-action on $X_L$ is defined by $\sigma \mapsto (\id,\sigma)$; we abuse notation and denote the automorphism $(\id,\sigma)$ by $\sigma$. This $\glk$-action is \defi{semi-linear}, i.e., $X_L \too \spec L$ is $\glk$-equivariant. The theory of Galois Descent provides the following converse (for a proof, see~\cite{Qpoints}*{Corollary~4.4.6} and use the equivalent definitions of ``descent datum'' in~\cite{Qpoints}*{Propositions 4.4.2 (i) and 4.4.4 (i)}).
\end{emp}

\begin{lem}[Galois Descent]\label{lem:galois-descent}
Let $Y/L$ be affine or quasi-projective, and let $s:\glk \too \aut_K(Y)$ be a semi-linear action. Then, the quotient $X \ceq Y/s(\glk)$ exists and there is a $\glk$-equivariant isomorphism $\phi:Y \iso X_L$  (i.e., $\phi \circ s(\sigma) = \sigma \circ \phi$ for all $\sigma \in \glk$). 
\end{lem}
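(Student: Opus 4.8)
The plan is to deduce the statement from the standard theory of faithfully flat (``Galois'') descent along the finite extension $L/K$, using the dictionary of \cite{Qpoints}*{Propositions 4.4.2(i) and 4.4.4(i)} between semi-linear $\glk$-actions on an $L$-scheme and descent data for $L/K$. First I would record that, under the identification $\aut_K(\spec B)=\aut_K(B)$ fixed in \ref{emp:galois-coh-intro}, a semi-linear action $s:\glk\too\aut_K(Y)$ on an affine $Y=\spec B$ is precisely a homomorphism $\sigma\mapsto\sigma_B$ from $\glk$ to the group of ring automorphisms of $B$, each $\sigma_B$ being semi-linear over $\sigma$ via the structure map $L\too B$; since $L/K$ is Galois this is the same datum as a descent datum on $B$. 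So it is enough to prove that every such datum is effective and to identify the descended object, and then to patch.

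\textbf{The affine case.} Here I would set $A\ceq B^{\glk}$, a $K$-algebra, and show that the natural map $A\otimes_K L\too B$ is an isomorphism of $L$-algebras. This is the heart of the matter, and is exactly the algebra form of Galois descent: base-changing along the faithfully flat $K\too L$ and using $L\otimes_K L\iso\prod_{\sigma\in\glk}L$ (valid since $L/K$ is Galois) transforms the descent datum into the split one, from which the claim is immediate; alternatively, one invokes the normal basis theorem to split $B\iso A\otimes_K L$ directly. Putting $X\ceq\spec A$, we obtain a canonical isomorphism $X_L=\spec(A\otimes_K L)\iso\spec B=Y$, and tracing through the construction shows that it carries the canonical semi-linear action $\sigma\mapsto(\id,\sigma)$ on $X_L$ to $s$ on $Y$; thus it is $\glk$-equivariant. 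Since the quotient of an affine scheme by a finite group is $\spec$ of the invariant subring, $Y\too X$ is the quotient $Y/s(\glk)$.

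\textbf{The quasi-projective case.} I would reduce to the affine case by exhibiting a cover of $Y$ by $\glk$-\emph{stable} affine opens. Given $y\in Y$, the orbit $\{s(\sigma)(y):\sigma\in\glk\}$ is finite, hence is contained in some affine open $U\subset Y$ (any finite set of points of a quasi-projective $L$-scheme lies in an affine open); then $V\ceq\bigcap_{\sigma\in\glk}s(\sigma)(U)$ is a $\glk$-stable open containing $y$, and it is affine because $Y$ is separated (a finite intersection of affine opens of a separated scheme is affine). Applying the affine case to each member $V_i$ of such a cover gives $K$-schemes $X_i$ and $\glk$-equivariant isomorphisms $V_i\iso(X_i)_L$; each $V_i\cap V_j$ is again a $\glk$-stable affine open, so it descends to affine opens of $X_i$ and $X_j$, the transition isomorphisms on $Y$ are $\glk$-equivariant and hence descend to $K$-isomorphisms satisfying the cocycle condition, and gluing the $X_i$ along these produces a $K$-scheme $X$ together with a $\glk$-equivariant isomorphism $\phi:Y\iso X_L$. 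The identity $X=Y/s(\glk)$ can then be verified on the affine cover, where it is the statement already proved.

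\textbf{Main obstacle.} The one place that needs genuine care, as opposed to bookkeeping, is this globalization: producing the $\glk$-stable affine open cover of $Y$ and checking that the local quotients and their gluing data descend compatibly. The affine descent isomorphism $A\otimes_K L\iso B$ and all the equivariance checks are routine once the translation into descent data is set up.
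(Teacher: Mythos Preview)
Your proposal is correct and follows essentially the same route the paper points to: the paper does not give its own proof of this lemma but simply refers the reader to \cite{Qpoints}*{Corollary~4.4.6}, together with the dictionary in \cite{Qpoints}*{Propositions~4.4.2(i) and 4.4.4(i)} between semi-linear $\glk$-actions and descent data---exactly the translation you set up. The paper does spell out the affine case separately in \Cref{exmp:descent-affine}, invoking Speiser's Lemma for the isomorphism $A\otimes_K L\iso B$, which matches your affine step; your globalization via $\glk$-stable affine opens is the standard argument underlying the cited result.
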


\begin{exmp}\label{exmp:descent-affine}
If $Y = \spec S$ is affine, then a semi-linear $\glk$-action on $Y$ corresponds to a semi-linear $\glk$-action on the $L$-algebra $S$, and we have $X = \spec R$, where $R$ denotes the ring of invariants $S^{s(\glk)}$. \emph{Speiser's Lemma}~\cite{gille-szamuely}*{Lemma~2.3.8} says that the map $R \otimes_K L \to S$ induced by the inclusions $R \subset S$ and $L \subset S$ is an $L$-algebra isomorphism. Passing to spectra, this defines the desired $\glk$-equivariant isomorphism $\phi: Y \iso X_L$. 
\end{exmp}


\begin{emp}\label{emp:GK-groups}
\textbf{$\glk$-groups.} By a  \defi{$\glk$-group}, we mean a group $G$ equipped with an action of $\glk$, denoted $(\sigma,g) \mtoo \prsup{\sigma}{g}$. As usual, $Z^1(\glk,G)$ denotes the set of  \defi{$1$-cocycles of $\glk$ in $G$} (i.e., set maps $a:\glk \too G$, denoted $\sigma \mapsto a_{\sigma}$, which satisfy the cocycle condition $a_{\sigma \tau} = a_{\sigma} \prsup{\sigma}a_{\tau}$), and $H^1(\glk,G)$ denotes the quotient set $Z^1(\glk,G)/\sim$, where $a \sim b$ if $a'_{\sigma} = g\inv a_{\sigma} \prsup{\sigma}{g}$ for some $g\in G$ and all $\sigma \in \glk$. If the $\glk$-action on $G$ is trivial then $Z^1(\glk,G)$ is the set of group homomorphisms $\Hom(\glk,G)$, and $H^1(\glk,G)$ is the quotient set $\Hom(\glk,G)/\sim$, where $a\sim b$ if $b_{\sigma} = g\inv a_{\sigma} g$ for some $g\in G$ and all $\sigma \in \glk$. 
\end{emp}

\begin{emp}\label{emp:twisting}
\textbf{Twisting.} 
Let $X$ be an affine or quasi-projective $K$-scheme. By an \defi{$L/K$-twist} of  $X$ we mean a pair $(X',\phi)$, where $X'$ is a $K$-scheme and $\phi: X_L \iso X'_L$ is an $L$-isomorphism; at times we may suppress the isomorphism $\phi$ for convenience. An $L/K$-twist of a $K$-algebra $R$ is a pair $(R',\phi)$, where $R'$ is a $K$-algebra and $\phi:R'_L \iso R_L$ is an $L$-algebra isomorphism. 

For any $K$-scheme $Y$, the group $\glk$ acts naturally on the set $\mor_L(X_L,Y_L)$ by $$\prsup{\sigma}{f} \ceq \sigma \circ f \circ \sigma \inv, \qquad \textup{ for } \sigma \in \glk \textup{ and } f \in \mor_L(X_L,Y_L).$$ In particular, this action makes $G \ceq \aut_L(X_L)$ into a $\glk$-group.  Any cocycle $a \in Z^1(\glk,G)$ gives rise to the \defi{twisted-by-$a$} semi-linear $\glk$-action
\begin{equation*}
    s_a:\glk \too \aut_K(X_L), \quad \sigma \mtoo s_a(\sigma) \ceq a_{\sigma} \circ \sigma. 
\end{equation*}
The \defi{twist of $X$ by $a$} is the quotient $\prsup{a}{X} \ceq X_L/s_a(\glk)$. (If $X = \spec R$ is affine, then $\prsup{a}{X} = \spec \prsup{a}{R}$, where $\prsup{a}{R}$ is the ring of invariants $R^{s_a(\glk)}$.) 
\Cref{lem:galois-descent} says that there is an $L$-isomorphism $\phi_a:X_L \iso \prsup{a}{X}_L$ which for all $\sigma \in \glk$ satisfies $\phi_a \circ s_a(\sigma) = \sigma \circ \phi_a$. 

The map $a \mapsto (\prsup{a}{X},\phi_a)$ is a bijection from $Z^1(\glk,G)$ to the set of $L/K$-twists of $X$. The inverse sends a twist $(X',\phi)$ to the \emph{difference cocycle}\footnote{It is so named because it encodes the difference between the natural $\glk$-actions on $X_L$ and $X'_L$.} $a\in Z^1(\glk,G)$ defined by $a_{\sigma} \ceq \phi\inv \circ \prsup{\sigma}{\phi}$. 
This induces the well-known bijection (see~\cite{Qpoints}*{Theorem~4.5.2})
\begin{equation*}
    H^1(\glk,\aut_L(X_L)) \iso \dfrac{\{\textup{$L/K$-twists of $X$}\}}{\textup{$K$-isomorphism}}, \qquad [a] \mtoo [\prsup{a}{X}].
\end{equation*}
\end{emp}


\begin{exmp}\label{emp:Et-n-L/K}
If $X = \spec K^n$, then $X_L = \spec L^n$ and $\aut_K(X) = \aut_L(X_L) = S_n$, the symmetric group $\aut_{\textup{Sets}}\{1,\dotsc,n\}$ with trivial Galois action. Identifying $\aut_K(K^n)$ with $\aut_K(X)$ (as in \ref{emp:galois-coh-intro}), each $g \in \aut_K(X)$ corresponds to the automorphism of $K^n$ defined on the standard basis $\{e_1,\dotsc,e_n\}$ by $e_i \mtoo e_{g(i)}$. Identifying $\Et(n,L/K)$ (cf. \ref{emp:fetk}) with the set of $L/K$-twists of $\spec K^n$ (modulo $K$-isomorphism), we get a bijection
\begin{equation*}
    H^1(\glk,S_n) \iso \Et(n,L/K), \qquad [a] \mtoo [\prsup{a}{(K^n)}].
\end{equation*}
\end{exmp}

\begin{emp}\label{emp:kalg}
\textbf{Functoriality.} 
Let $G$ be a group, and let $\sch{K}{G}$ denote the category of affine or quasi-projective $K$-schemes  endowed with a $G$-action $G \too \aut_K(X)$ (morphisms in $\sch{K}{G}$ are $G$-equivariant $K$-morphisms). Such an action induces a $\glk$-group homomorphism $f:G \too \aut_L(X_L)$ (viewing $G$ as a $\glk$-group with trivial action). For a cocycle $a\in \Hom(\glk,G)$, we abuse notation and write $\prsup{a}{X}$ for the twist $\prsup{f \circ a}{X}$, and we again call this the twist of $X$ by $a$. Let us recall here some basic facts about the twisting operation.
\begin{enumerate}[(i)]
    \item \label{emp:kalg:i} Twisting by $a\in Z^1(\glk,G) = \Hom(\glk,G)$ defines a functor $\prsup{a}{(-)}$ from $\sch{K}{G}$ to the category of $K$-schemes-- we can {twist} any morphism $\pi:X \too Y$ in $\sch{K}{G}$ to get a $K$-morphism $\prsup{a}{\pi}:\prsup{a}{X} \too \prsup{a}{Y}$ such that the associated $L$-isomorphisms $\phi_a: X_L \iso \prsup{a}{X}_L$ and $\psi_a: Y_L \iso \prsup{a}{Y}_L$ satisfy $\prsup{a}{\pi}_L \circ \phi_a = \psi_a \circ \pi_L : X_L \too \prsup{a}{Y}_L$. If moreover the $G$-action on $Y$ is trivial then $\prsup{a}{Y} = Y$ and $\psi_a$ is the identity, so we have a commutative diagram
    \begin{equation*}
    \begin{tikzcd}
        X_L \arrow[rr, "{\resizebox{0.45cm}{0.1cm}{$\sim$}}"', "\phi_a"] \arrow[dr, "\pi_L"'] && \prsup{a}{X}_L \arrow[dl, "\prsup{a}{\pi}_L"] \\
        & Y_L.
    \end{tikzcd}
    \end{equation*}
    \item \label{emp:kalg:ii}If $X$ and $Y$ are objects in $\sch{K}{G}$, then we view $X \times Y$ again as an object in $\sch{K}{G}$ by endowing it with the natural \emph{diagonal action} of $G$, where $g\in G$ acts by the automorphism $(g,g)$. Then, the functor $\prsup{a}(-)$ commutes with taking products-- we have a $K$-isomorphism
    \begin{equation*}
        \prsup{a}{(X\times Y)} \iso \prsup{a}{X} \times \prsup{a}{Y}.
    \end{equation*}
\end{enumerate}
\end{emp}

\section{Twisting a  construction}\label{sec:twisting}

\begin{emp}\label{emp:constructions}
\textbf{Constructions.}
Let $K$ be a field, and $\Omega$ a finite \'etale $K$-algebra of degree $n\geqslant 1$. By a \defi{genus $g$ construction for $\Omega/K$}, we mean a tuple of data $\scrc \ceq (\kk,\X',\D',\ul{P})$, where
\begin{enumerate}[(i)]
    \item $\kk/K$ is a function field,\label{emp:construction:kk}
    \item $\X'/\kk$ is a nice curve of genus $g$, and\label{emp:construction:X}
    \item $\D' \subset \X'$ is a divisor of type $\Omega_{\kk} \ceq \Omega \otimes_K \kk$. 
    \item $\ul{P} : \spec \Omega_{\kk} \iso \D'$ is a $\kk$-isomorphism. 
\end{enumerate}
The natural case (the case of ``trivial Galois action'') is when $\Omega = K^n$. Since $\spec (K^n)_{\kk} \cong \spec \kk^n = \sqcup_{i=1}^n \spec \kk$, giving a construction $(\kk,\X',\D',\ul{P})$ for $K^n/K$ amounts to giving a nice curve $\X'/\kk$ with distinct marked points $P_1,\dotsc,P_n \in \X'(\kk)$. Indeed, $n$ such points together define a $\kk$-isomorphism $\ul{P}: \spec (K^n)_{\kk}\iso \D'$, where $\D'$ is the union of (the images of) the points. Conversely, given the isomorphism $\ul{P}$, the $\kk$-points of $\spec {\kk}^n$ (each corresponding to the projection maps $p_i: \kk^n \surj \kk$) define $n$ marked points $P_i \ceq \ul{P}\circ p_i^* \in \X'(\kk)$. 
\end{emp}

\begin{emp}\label{emp:symmetry}
\textbf{Symmetry.}
Suppose $\scrc \ceq (\kk,\X',\D',\ul{P})$ is a construction for $K^n/K$.  Let $G$ be a finite subgroup of $\kk$ with fixed field $\mm$, so that $G$ is the Galois group $\gg{\kk/\mm}$. We say that $\scrc$ \defi{admits symmetry by $G$} if either of the following equivalent conditions\footnote{The equivalence of conditions (i) and (ii) follows by Galois Descent (\Cref{lem:galois-descent}).} hold:
\begin{enumerate}[(i)]
    \item \label{item:symmetry:i} The $G$-action on $\spec \kk$ extends to a semi-linear $G$-action on $\X'$ and its divisor $\D'$.
    \item \label{item:symmetry:ii} There exists a curve $\X/\mm$ and divisor $\D \subset \X$ such that $\X' = \X_{\kk}$ and $\D' = \D_{\kk}$.
\end{enumerate}
\end{emp}

\begin{exmp}\label{exmp:joshi}
The space $S$ of homogeneous ternary cubic forms over $\Q$ is isomorphic to $\p^9_{\Q}$. It is well-known that there is an open subscheme $U \subset (\p^2_{\Q})^9$ (parametrizing nine points in general position) such that for any point $(P_1,\dotsc,P_9) \in U({\Q})$, there is a unique cubic curve over $\Q$ passing through all the $P_i$'s. So, if we define
    \begin{equation*}
        \calx' \ceq \{(f,Q,P_1,\dotsc,P_9) \mid f(Q) = f(P_1) = \dotsb = f(P_9) = 0 \} \subset S \times \p^2_{\Q} \times U,
    \end{equation*}
then the projection $\pi: \calx' \too U$ admits nine disjoint sections $\calp_i:U \too \calx'$. Putting $\kk \ceq {\Q}(U)$, the generic fiber $\X'/\kk$ of $\pi$ is a smooth cubic curve endowed with nine $\kk$-points $P_1,\dotsc,P_9$ (each $P_i$ is the image of the generic point of $V$ under $\calp_i$). So, if $\D'$ denotes the union of these points, then $\scrc \ceq (\kk,\X',\D',\ul{P})$ is a genus one construction for ${\Q}^9/{\Q}$. 

The natural $S_9$-action on $V$ extends to an action on $\calx'$ which preserves the image of the sections $\calp_1,\dotsc,\calp_9$. Restricting to the generic point of $V$, we obtain an $S_9$-action on $\kk$ that extends to an action on $\X'$ fixing $\D'$, so $\scrc$ has symmetry by $S_9$. The fixed field $\mm$ of $S_9$ is the function field of $B \ceq V/S_9$, which is itself an open subscheme of the symmetric power $\sym^9(\p^2_{\Q}) \ceq (\p^2_{\Q})^9/S_9$. The curve $\X'$ descends to a smooth cubic $\X/\mm$, and $\D'$ to a degree $9$ closed point $\D \in \X$.
\end{exmp}



\begin{emp}\label{emp:G-action}
Fix a construction $\scrc \ceq (\kk,\X_{\kk},\D_{\kk},\ul{P})$ with symmetry by a finite group $G \subset \aut_K(\kk)$ (as in \ref{emp:symmetry}). Choose a degree $n$ polynomial $m(x) \in \mm[x]$ such that  $\D = \spec \mm[x]/m(x)$. Then, $\D_{\kk} = \spec \kk[x]/m(x)$, and the $G$-set $\D(\kk)$ is naturally isomorphic to $\mf{R}$, the set of roots of $m(x)$ in $\kk$. The isomorphism $\ul{P}$ is defined on coordinate rings by
\begin{equation*}
    \ul{P}^{\#} : \kk[x]/m(x) \iso (K^n)_\kk, \qquad b(x) \mtoo \sum_{i=1}^n e_i \otimes b(r_i),
\end{equation*}
where $\{e_1,\dotsc,e_n\}$ is the standard basis of $K^n$ and $\mf{R} = \{ r_1,\dotsc,r_n \}$\footnote{Note: This ordering of the $G$-set $\mf{R}$ is determined by the isomorphism $\ul{P}$.}. The natural $G$-action on $\mf{R}$ determines a permutation representation $G \too S_n = \aut_{\textup{Sets}}\{1,\dotsc,n\}$ such that
\begin{equation}\label{eq:G-to-Sn}
    g(r_i) = r_{g(i)},  \textup{ for all } g\in G \textup{ and } i=1,\dotsc,n.
\end{equation}
Now, let $G$ act on $K^n$ via this homomorphism (i.e., $g$ acts by $e_i \mapsto e_{g(i)}$)\footnote{If we identify $(K^n)_{\kk}$ with $\kk^n$ via the map $\sum_i e_i \otimes b_i \mapsto (b_i)_i$, then $g\in G$ acts by $(b_i) \mapsto (b_{g\inv(i)})$.}. Using that $(\spec K^n)_{\kk} \ceq (\spec K^n) \times_K {\kk}$ is canonically isomorphic to $\spec (K^n)_{\kk}$, we have the following.
\end{emp}

\begin{lem}\label{lem:G-action}
The $\kk$-isomorphism $\ul{P}: (\spec K^n)_{\kk} \iso \D_{\kk}$ is $G$-equivariant, where $G$ acts on $(\spec K^n)_{\kk}$ (resp. $\D_{\kk}$) by the diagonal action $g \mapsto (g,g)$ (resp. natural action $g \mapsto (\id,g)$). 
\end{lem}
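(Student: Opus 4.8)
The plan is to reduce to a short computation on coordinate rings. Since $(\spec K^n)_\kk$ and $\D_\kk$ are both affine and $\ul{P}$ is an isomorphism, $\ul{P}$ is $G$-equivariant if and only if the induced $\kk$-algebra isomorphism $\ul{P}^{\#} : \kk[x]/m(x) \iso (K^n)_\kk$ intertwines the $G$-action on $\kk[x]/m(x)$ coming from $g \mapsto (\id,g)$ with the $G$-action on $(K^n)_\kk$ coming from the diagonal $g \mapsto (g,g)$.

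First I would record what these two actions do. On $A \ceq \kk[x]/m(x) = (\mm[x]/m(x)) \otimes_\mm \kk$, the automorphism attached to $(\id,g)$ fixes $x$ and acts on the $\kk$-factor; since $m(x) \in \mm[x]$ is fixed by $g$ this descends to the quotient and sends $b(x) = \sum_j a_j x^j$ to ${}^g b(x) \ceq \sum_j g(a_j)x^j$. On $B \ceq (K^n)_\kk \cong \kk^n$, the diagonal automorphism $(g,g)$ is the composite of the $\kk$-linear permutation induced by the permutation homomorphism $G \too S_n$ defined by \eqref{eq:G-to-Sn} — which, by the footnote in \ref{emp:G-action}, carries a tuple $(c_i)_i$ to $(c_{g\inv(i)})_i$ — with the coefficient-wise action of $g$ on $\kk$ (the two commute). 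Finally, $\ul{P}^{\#}$ sends $b(x)$ to $\sum_i e_i \otimes b(r_i)$, i.e. in $\kk^n$-coordinates to $(b(r_1),\dotsc,b(r_n))$.

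With these conventions fixed, the verification is immediate. On one hand, $\ul{P}^{\#}({}^g b) = ({}^g b(r_i))_i$; using that $g$ is a field automorphism of $\kk$ and that $g\inv(r_i) = r_{g\inv(i)}$ by \eqref{eq:G-to-Sn}, one has ${}^g b(r_i) = g\bigl(b(r_{g\inv(i)})\bigr)$, so $\ul{P}^{\#}({}^g b) = \bigl(g(b(r_{g\inv(i)}))\bigr)_i$. On the other hand, applying $(g,g)$ to $\ul{P}^{\#}(b) = (b(r_i))_i$ first permutes to $(b(r_{g\inv(i)}))_i$ and then applies $g$ coordinate-wise, yielding the same tuple. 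Hence $\ul{P}^{\#}$ is $G$-equivariant, and therefore so is $\ul{P}$.

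I expect the only delicate point to be bookkeeping — orienting the permutation homomorphism $G \too S_n$, its contravariant effect on the coordinate ring $K^n$ (the $g$ versus $g\inv$ in the footnote of \ref{emp:G-action}), and the semilinear action of $g$ on $\kk$ all consistently, so that \eqref{eq:G-to-Sn} is applied in the correct slot. As a sanity check one may instead verify equivariance on $\kk$-points, noting that $(\spec K^n)_\kk = \spec \kk^n$ consists of exactly $n$ points, all $\kk$-rational, on which the assertion becomes the defining compatibility of $\ul{P}$ with the $G$-action on $\mf{R}$ recorded in \eqref{eq:G-to-Sn}; I would nonetheless present the coordinate-ring calculation as the main argument.
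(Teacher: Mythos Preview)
Your proposal is correct and follows essentially the same approach as the paper: both reduce to checking that $\ul{P}^{\#}$ intertwines the two ring actions, and both carry out the same short computation using \eqref{eq:G-to-Sn}. The only difference is notational --- the paper works in the $\sum_i e_i \otimes \alpha_i$ presentation of $(K^n)_\kk$ and writes $c(x) \ceq (g\cdot b)(x)$, whereas you pass to $\kk^n$-tuple coordinates; the re-indexing step is the same in either language.
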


\begin{proof}
The automorphism $(\id,g) \in \aut_{\mm}(\D_{\kk})$ corresponds to the automorphism $g$ of $\kk[x]/m(x)$ which acts on coefficients, i.e., sends an element $b(x) = \sum_i b_ix^i$ to $(g\cdot b)(x) \ceq \sum_{i} g(b_i)x^i$. Similarly, $(g,g) \in \aut_{\mm}((\spec K^n)_{\kk})$ corresponds  to the automorphism
    \begin{equation*}
        g\otimes g \in \aut_{\mm}(K^n \otimes_K \kk), \qquad \sum_{i=1}^n e_i \otimes \alpha_i \mtoo \sum_{i=1}^n e_{g(i)} \otimes g(\alpha_i).
    \end{equation*}
    Proving that $\ul{P}$ is $G$-equivariant amounts to showing that for all $g\in G$, we have
    \begin{equation}\label{eq:g-otimes-g-hom}
        (g \otimes g) \circ \ul{P}^{\#} = \ul{P}^{\#} \circ g \in \Hom (\kk[x]/m(x), K^n \otimes_K \kk). 
    \end{equation}
    Let $b(x)\in \kk[x]/m(x)$ and $g\in G$. Writing $c(x) = (g\cdot b)(x)$, we see that $\ul{P}^{\#} \circ g$ sends $b(x)$ to $\sum_{i=1}^n e_i \otimes c(r_i)$. Since $g(b(r_i)) = c(g(r_i)) = c(r_{g(i)})$, \eqref{eq:g-otimes-g-hom} follows because
    \begin{equation*}
    \begin{tikzcd}
         b(x) \arrow[r, mapsto, "\ul{P}^{\#}"] & {\displaystyle\sum_{i=1}^n} e_i \otimes b(r_i) \arrow[r, mapsto, "{g \otimes g}"] & {\displaystyle\sum_{i=1}^n} e_{g(i)} \otimes g(b(r_i)) = {\displaystyle\sum_{i=1}^n} e_i \otimes c(r_i) 
    \end{tikzcd} \qedhere
    \end{equation*}
\end{proof}

\begin{emp}\label{emp:Et-C}
\textbf{The set $\Et({\scrc},L/K)$.}
The representation $G \too S_n$ in \ref{emp:G-action} induces, for any finite Galois extension $L/K$, a map of pointed sets $H^1(\glk,G) \too H^1(\glk,S_n) = \Et(n,L/K)$ (cf. \ref{emp:Et-n-L/K}). We define $\Et(\scrc,L/K)$ to be the image of this map, i.e., 
\begin{equation*}
    \Et(\scrc,L/K) \ceq \{ \prsup{a}{(K^n)} \mid a \in \Hom(\glk,G) \} \subset \Et(n,L/K).
\end{equation*}
For example, if $G = S_n$ (as in \Cref{exmp:joshi} and the Mestre--Shioda Construction in \ref{emp:MSC}), then $\Et(\scrc,L/K) = \Et(n,L/K)$. 
Now, with this setup, we have the following punchline.
\end{emp}

\begin{thm}[Twisting a construction]\label{prop:twisting-C}
Let $a\in \Hom(\glk,G)$ be a cocycle. Put $\Omega \ceq \prsup{a}{(K^n)} \in \Et(\scrc,L/K)$ and $\ff \ceq \prsup{a}{\kk}$. Then, we have the following.
\begin{enumerate}[(a)]
    \item \label{prop:twisting-C-isom-mm}$\ff/K$ is a function field containing $\mm$, and $\spec \kk_L \iso \spec \ff_L$\footnote{We are referring here to the $L$-isomorphism coming from the definition of $L/K$-twists (cf. \ref{emp:twisting}).} is an $\mm_L$-isomorphism.
    \item \label{prop:twisting-C-isom-twist}Twisting $\ul{P}$ by the cocyle $a$ gives an $\ff$-isomorphism
        \begin{equation}\label{eq:prsupa-theta}
            \prsup{a}{\ul{P}} : (\spec \Omega)_{\ff} \iso \D_\ff. 
        \end{equation}
    Thus, $\prsup{a}{\scrc} \ceq (\ff,\X_{\ff},\D_{\ff},\prsup{a}{\ul{P}})$ is a genus $g$ construction for $\Omega/K$ (the \defi{twist of $\scrc$ by $a$}).
    \item \label{prop:twisting-C-isom-dim} $W(\D_{\ff})$ is isomorphic to a  $\qgk$-submodule of $\V{\Omega}{K}$. Moreover, $\dim_{\Q} W(\D_{\ff}) = \dim_{\Q} W(\D),$
    so if $W(\D)$ is of maximal dimension (namely, $n-1$), then $W(\D_{\ff}) \cong \V{\Omega}{K}$. 
    \end{enumerate}
\end{thm}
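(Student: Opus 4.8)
The plan is to assemble the twisting formalism of \Cref{sec:twisting}---especially \Cref{lem:galois-descent}, \Cref{lem:G-action}, and the functoriality recorded in \ref{emp:kalg}---with \Cref{lem:V(D/K)}. For (a) I would unwind $\ff=\prsup{a}{\kk}=\kk_L^{s_a(\glk)}$, where $s_a(\sigma)=a_\sigma\circ\sigma$ with $a_\sigma\in G\subseteq\aut_K(\kk)$. Since each $a_\sigma$ fixes $\mm=\kk^G$ pointwise, $s_a$ restricts on $\mm_L\subseteq\kk_L$ to the standard action $\id_\mm\otimes\sigma$; hence $\mm=\mm_L^{\glk}\subseteq\ff$, and the Speiser isomorphism $\ff\otimes_K L\iso\kk_L$ of \Cref{exmp:descent-affine} (which is $\phi_a$ on spectra) restricts to the identity on $\mm_L$, so it is an $\mm_L$-isomorphism. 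That $\ff/K$ is a function field I would get by tensoring $\ff\otimes_K L\cong\kk\otimes_K L$ with $\kbar$ over $L$: this yields $\ff\otimes_K\kbar\cong\kk\otimes_K\kbar$, a domain because $\kk$ is the function field of a $K$-variety, so $\ff$ is geometrically integral, finitely generated over $K$, and of positive transcendence degree.

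For (b) I would regard $\spec K^n$ (with $G$ acting through $G\to S_n$), $\spec \kk$ (with its $G$-action over $\mm$), and $\D$ (over $\mm$, with trivial $G$-action) as objects of $\kschG$, whose $a$-twists are $\spec \Omega$, $\spec \ff$, and $\D$ itself. Applying \ref{emp:kalg}(ii) to the diagonal $G$-scheme $(\spec K^n)_\kk$ gives $\prsup{a}{((\spec K^n)_\kk)}\cong(\spec \Omega)_\ff$, and the analogous computation over $\mm$ (legitimate since everything in sight is flat over the field $\mm$) gives $\prsup{a}{(\D_\kk)}\cong\D_\ff$. The structure maps $(\spec K^n)_\kk\to\spec \kk$ and $\D_\kk\to\spec \kk$ are $G$-equivariant, so by \ref{emp:kalg}(i) the $G$-equivariant isomorphism $\ul{P}$ of \Cref{lem:G-action} twists to an $\ff$-isomorphism $\prsup{a}{\ul{P}}\colon(\spec \Omega)_\ff\iso\D_\ff$; since genus is preserved by base field extension, this exhibits $\prsup{a}{\scrc}$ as a genus $g$ construction for $\Omega/K$.

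For (c) I would first prove the dimension equality geometrically. Fix an algebraic closure $\ol\ff\supseteq\ff\supseteq\mm$ and let $\ol\mm\subseteq\ol\ff$ be the algebraic closure of $\mm$ inside it; since $\D$ is finite over $\mm$, base change canonically identifies $\D_\ff(\ol\ff)$ with $\D(\ol\mm)$, compatibly with the embeddings $\D_\ff(\ol\ff)\hookrightarrow\X(\ol\ff)$ and $\D(\ol\mm)\hookrightarrow\X(\ol\mm)\hookrightarrow\X(\ol\ff)$. Hence $\W{\D_\ff}$ is the image of $\W{\D}$ under the injection $J_{\X}(\ol\mm)_\Q\hookrightarrow J_{\X}(\ol\ff)_\Q$, and $\dim_\Q\W{\D_\ff}=\dim_\Q\W{\D}$. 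For the module statement I would apply \Cref{lem:V(D/K)} to the divisor $\D_\ff$ of type $\Omega_\ff$ on $\X_\ff/\ff$, getting a surjection of $\Q\gg{\ff}$-modules $\V{\Omega_\ff}{\ff}=\Q[\D_\ff(\ol\ff)]_0\twoheadrightarrow\W{\D_\ff}$, and then identify $\V{\Omega_\ff}{\ff}$ with $\V{\Omega}{K}$. By (a), $\ff\otimes_K L$ is a domain, hence a field $\ff_L$, so $L\cap\ff=K$ and restriction gives $\gal(\ff_L/\ff)\iso\glk$; and since $L/K$ is Galois and splits $\Omega$, every $K$-algebra map $\Omega\to\ol\ff$ has image in $L$, so $\D_\ff(\ol\ff)=\Hom_K(\Omega,L)=(\spec \Omega)(\kbar)$ as sets, with the action of $\gg{\ff}$ factoring through $\gal(\ff_L/\ff)\iso\glk$ and agreeing there with the natural action (in particular the geometric points of $\D_\ff$ are $\ff_L$-rational, so $\W{\D_\ff}\subseteq J_{\X_\ff}(\ff_L)_\Q$, as required for the $\Q\glk$-structure of \ref{emp:specialization-intro}). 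Consequently $\V{\Omega_\ff}{\ff}$ is, as a $\qgk$-module, the inflation of $\Q[(\spec \Omega)(\kbar)]_0\cong\V{\Omega}{K}$, so $\W{\D_\ff}$ is a quotient of $\V{\Omega}{K}$ and, by Maschke's theorem for the semisimple algebra $\Q\glk$, isomorphic to a $\qgk$-submodule of $\V{\Omega}{K}$. Finally, if $\dim_\Q\W{\D}=n-1=\deg \D_\ff-1$, then $\dim_\Q\W{\D_\ff}=n-1$ forces the displayed surjection to be an isomorphism, giving $\W{\D_\ff}\cong\V{\Omega}{K}$.

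The hardest part, I expect, is the module identification in (c): one must keep track that $\W{\D_\ff}$ is \emph{a priori} only a $\gg{\ff}$-module, verify that this structure descends through $\gg{\ff}\twoheadrightarrow\gal(\ff_L/\ff)\cong\glk$, and match $\D_\ff(\ol\ff)$ with the $\glk$-set underlying $\V{\Omega}{K}$---the two load-bearing facts being $L\cap\ff=K$ (from Speiser's Lemma) and that $\Omega$ being split by $L$ pins down all geometric points of $\D_\ff$ over $\ff_L$. The bookkeeping in (b)---equivariance of the structure maps, and compatibility of the twisting functor with the fiber products over $K$ and over $\mm$---also needs some care.
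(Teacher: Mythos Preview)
Your proposal is correct and follows essentially the same route as the paper. The paper's proof is terser---for (a) it simply invokes the functoriality in \ref{emp:kalg}\ref{emp:kalg:i} applied to the $G$-equivariant morphism $\spec\kk\to\spec\mm$; for (b) it cites \Cref{lem:G-action} and \ref{emp:kalg}\ref{emp:kalg:ii} exactly as you do; and for (c) it phrases the dimension equality as ``$\W{\D_\ff}$, viewed as a $\Q G_\ff$-module, is the restriction $\textup{Res}^{G_\mm}_{G_\ff}\W{\D}$,'' which is your base-change argument in different words---but the underlying ideas, including the identification of $\D_\ff(\ol\ff)$ with $\Hom_K(\Omega,L)$ as a $\glk$-set and the appeal to \Cref{lem:V(D/K)}, are the same.
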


\begin{proof}
\begin{enumerate}[(a)]
    \item The morphism $\spec \kk \too \spec \mm$ is $G$-equivariant (with $G$ acting trivially on $\spec \mm$), so \ref{prop:twisting-C-isom-mm} follows from the functoriality of the twisting operation (see \ref{emp:kalg} \ref{emp:kalg:i}). 
    \item \Cref{lem:G-action} says that the $G$-action on $(\spec K^n)_{\kk}$ (which makes $\ul{P}$ $G$-equivariant) is the diagonal action. So, the product $(\spec \Omega)_{\ff} = (\spec \Omega) \times_K (\spec \ff)$ of the respective twists is $K$-isomorphic to the twist $\prsup{a}{(\spec K^n) \times_K (\spec \kk)}$ (cf. \ref{emp:kalg} \ref{emp:kalg:ii}). Applying the functor $\prsup{a}(-)$ to $\ul{P}: (\spec K^n)_{\kk} \iso \D_{\kk}$ yields the isomorphism \eqref{eq:prsupa-theta}. This proves \ref{prop:twisting-C-isom-twist}.
    \item Since $\D(\ff_L)$ is a $\glk$-set isomorphic to $\Hom_K(\Omega,L)$, \Cref{lem:V(D/K)} implies that $W(\D_{\ff}) \subset \J(\ff_L)_{\Q}$ is a $\Q \glk$-submodule of $\V{\Omega}{K}$, with equality if and only if $\dim_{\Q} \W{\D_{\ff}} = n-1$. The key point now is that $\W{\D_{\ff}}$, viewed as a $\Q G_{\ff}$-module, is the restriction $\textup{Res}_{G_{\ff}}^{G_{\mm}} W(\D)$. So, we have $\dim_{\Q} \W{\D_{\ff}}= \dim_{\Q} \W{\D}$, and part \ref{prop:twisting-C-isom-dim} follows.\qedhere
\end{enumerate}
\end{proof}

\begin{exmp}\label{exmp:easy-symmetry}
Take any nice curve $\X$ over a function field $\mm/K$. For a general divisor $\D \subset \X$ which is \'etale of degree $n$ over $\mm$, one expects that the splitting field $\kk$ of $\D$ is a function field over $K$. In this case, if choose a $\kk$-isomorphism $\ul{P}: \spec (K^n)_{\kk} \iso \D_{\kk}$, then  $(\kk,\X_{\kk},\D_{\kk},\ul{P})$ is a construction for $K^n/K$ with symmetry by $G \ceq \gg{\kk/\mm}$. Note that $G$ acts faithfully $\D(\kk)$, so the associated representation $G \inj S_n$ is injective. If moreover $\D/\mm$ is irreducible (i.e. $\D$ is a closed point of $\X$) then $G$ is a \emph{transitive subgroup} of $S_n$. In this case, an \'etale $K$-algebra $\Omega$ of degree $n$ lies in $\Et(\scrc,L/K)$ if and only if the Galois group of its splitting field is a subgroup of $G$. 
\end{exmp}

\begin{exmp}\label{exmp:shioda-11}
A fundamental limitation of the twisting method is that the symmetry in a given construction $\scrc$ may not be ``big enough'' so that $\Et(\scrc,L/K)$ contains a given $\Omega/K$.  For example, if $F/\Q$ is a given degree $11$ field extension and we wish to construct an elliptic curve realizing $\V{F}{\Q}$, we might attempt to twist the constructions in~\cite{neron-N1} (elucidated in~\cite{shioda-elliptic-11}) and ~\cite{mestre11}, both of which are constructions of elliptic curves over $\Q$ with rank at least $11$. However, as we describe now, neither of these approaches would work.

\begin{enumerate}[(a)]
	\item N\'eron constructs a family $E/U$ of elliptic surfaces, with $U/\Q$ a $6$-dimensional rational variety, together with an elliptic curve $\Gamma/\Q(E)$ having $11$ independent rational points $Q_1,\dotsc,Q_8,M_1,M_2,M_3$ (cf. ~\cite{shioda-elliptic-11}*{Lemma~2}). This defines a genus $1$ construction $\scrc$ for $\Q^{11}/\Q$, but if it has symmetry by some finite group $G$ then the representation $G \too S_{11}$ is not transitive. Indeed, condition (ii) on~\cite{shioda-elliptic-11}*{Page~110}, which is used in defining $U$, and hence, also $E$, implies that under the action of any subgroup $G \subset \Q(E)$ on the points of $\Gamma$, the points $\{Q_1,Q_2,Q_3\}$ can never lie in the same orbit as those in $\{Q_4,\dotsc,Q_8\}$. Thus, $\Et(\scrc,L/\Q)$ does not contain any number field of degree $11$. 
	\item In~\cite{mestre11}, Mestre constructs a rational $\Q$-variety $U$ and  an elliptic curve $E/\Q(U)$  with $12$ marked points generating rank $11$ over $\Q(U)$. This defines a genus $1$ construction $\scrc$ for $\Q^{12}/\Q$ with symmetry by the alternating group $A_4$ (see~\cite{elkiesthreelectures}*{Pages~4 and 5} for an exposition). Note that if $\Omega = F_1 \times \dotsb \times F_r$ is of the form $\prsup{a}{(K^{12})}$ for some cocycle $a\in \Hom(\glk,A_4)$, then the Galois group of the splitting field of each $F_i$ is a subgroup of $A_4$. Thus, no $\Omega \in \Et(\scrc,L/\Q)$ can contain a degree $11$ number field as a summand. 
\end{enumerate} 
\end{exmp}


\begin{emp}\label{emp:linear}
\textbf{Linear actions.} 
We will apply \Cref{prop:twisting-C} in a situation where $\kk/K$ is a rational function field and the $G$-action on $\kk$ is \defi{linear}, by which we mean that $\kk = K(x_1,\dotsc,x_n)$ for some elements $x_i$ such that the $G$-action on $\kk$ restricts to a linear representation on the vector space $\textup{span}_K\{x_1,\dotsc,x_n\}$. The next lemma will allow us to apply the specialization method to each of the twisted constructions $\prsup{a}{\scrc}$.  
\end{emp}

\begin{lem}\label{lem:hilbert 90}
In \Cref{prop:twisting-C}, if the $G$-action on $\kk$ is linear, then every twist $\ff \ceq \prsup{a}{\kk}$ appearing in \ref{prop:twisting-C} is again a rational function field over $K$.
\end{lem}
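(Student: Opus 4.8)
The plan is to reduce the statement to Galois descent for $L$-vector spaces, applied to the linear part of the $G$-action, and then pass to fraction fields.

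First I would make the linear hypothesis explicit. Write $\kk = K(x_1,\dotsc,x_n)$ so that $V\ceq\textup{span}_K\{x_1,\dotsc,x_n\}$ is a $G$-stable subspace on which $G$ acts $K$-linearly, and put $R\ceq K[x_1,\dotsc,x_n]\subset\kk$; thus $R=\sym_K V$, $\kk=\Frac R$, and $G$ acts on $R$ through its linear representation on $V$, in particular preserving the grading. Since $G$ carries the trivial $\glk$-action, the cocycle $a$ is a homomorphism $\glk\too G$, and the twisted semilinear action on $\kk_L=\kk\otimes_K L$ is $s_a(\sigma)=a_\sigma\otimes\sigma$; this is a $K$-algebra automorphism of $\kk_L$ that stabilizes the subring $R_L=R\otimes_K L=L[x_1,\dotsc,x_n]$, preserves its grading, and restricts on the degree-one piece $V_L\ceq V\otimes_K L$ to $a_\sigma\otimes\sigma$. (Because $\kk$ is a rational function field, $\kk_L=L(x_1,\dotsc,x_n)$ is again a field, namely $\Frac(R_L)$, so $\prsup{a}{\kk}=\kk_L^{\,s_a(\glk)}$ is well defined.)

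The one non-formal step is to descend the linear piece. The finite-dimensional $L$-vector space $V_L$ equipped with the semilinear $\glk$-action $s_a$ is a descent datum, so by Galois descent for $L$-vector spaces (Speiser's Lemma \cite{gille-szamuely}*{Lemma~2.3.8}, also used in \Cref{exmp:descent-affine}; equivalently, the vanishing of $H^1(\glk,GL_n(L))$) the $K$-subspace $W\ceq V_L^{\,s_a(\glk)}$ has $\dim_K W=n$ and the multiplication map $W\otimes_K L\too V_L$ is an $L$-isomorphism carrying $\id_W\otimes\sigma$ to $s_a(\sigma)$. Applying the symmetric-algebra functor, which commutes with base change and respects gradings, this extends to a graded $L$-algebra isomorphism $(\sym_K W)\otimes_K L\iso R_L$ intertwining $\id\otimes\sigma$ with $s_a(\sigma)$; taking $\glk$-invariants and using $L^{\glk}=K$ then gives
\[
\prsup{a}{R}\;\ceq\;R_L^{\,s_a(\glk)}\;=\;\sym_K W\;=\;K[y_1,\dotsc,y_n]
\]
for any $K$-basis $y_1,\dotsc,y_n$ of $W$ --- a polynomial ring over $K$ in $n$ variables.

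Finally I would pass to fraction fields. The finite group $\Gamma\ceq s_a(\glk)$ acts by ring automorphisms on the domain $\kk_L=\Frac(R_L)$ and stabilizes $R_L$, so forming invariants commutes with forming fraction fields: any $u=p/q\in\kk_L^{\,\Gamma}$ with $p,q\in R_L$ can be rewritten as $\bigl(p\prod_{\gamma\neq1}\gamma(q)\bigr)\big/\prod_{\gamma\in\Gamma}\gamma(q)$, a quotient whose denominator lies in $R_L^{\,\Gamma}$, which forces the numerator into $R_L^{\,\Gamma}$ as well; hence $\kk_L^{\,\Gamma}=\Frac(R_L^{\,\Gamma})$. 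Therefore
\[
\ff=\prsup{a}{\kk}=\kk_L^{\,s_a(\glk)}=\Frac(\prsup{a}{R})=\Frac\bigl(K[y_1,\dotsc,y_n]\bigr)=K(y_1,\dotsc,y_n),
\]
a rational function field over $K$. I do not anticipate a real difficulty here: the only substantive ingredient is the classical descent/Hilbert 90 statement above, and the only point needing a little care is that $\kk$ is not of finite type over $K$, so one cannot apply \Cref{lem:galois-descent} to $\spec\kk$ directly --- the remedy, as above, is to descend first on the finitely generated graded ring $R=\sym_K V$ and only then take fraction fields.
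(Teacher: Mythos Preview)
Your proof is correct and uses the same essential ingredient as the paper --- Hilbert~90 (equivalently, Speiser's Lemma) --- but the execution differs in emphasis. The paper's argument is a one-liner at the level of cohomology: the linearity hypothesis gives inclusions of $\glk$-groups $G \subset GL_n(L) \subset \aut_L(\kk_L)$, inducing maps $H^1(\glk,G) \to H^1(\glk,GL_n(L)) \to H^1(\glk,\aut_L(\kk_L))$; since $H^1(\glk,GL_n(L))=\{1\}$ by Hilbert~90, the class of $a$ lands on the trivial class, so $\prsup{a}{\kk}$ is $K$-isomorphic to $\kk$. Your version unpacks this into explicit descent --- first on the linear piece $V_L$, then on $\sym_K V$, then passing to fraction fields --- which gives you concrete generators $y_1,\dotsc,y_n$ for $\ff$ and handles the (minor) issue that $\spec\kk$ is not affine of finite type, at the cost of some length. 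Both are valid; the paper's is more streamlined, yours more constructive.
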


\begin{proof}
By assumption, we have inclusions of $\glk$-groups $G \subset GL_n(L) \subset \aut_L({\kk}_L)$. These induce maps of pointed sets $H^1(\glk,G) \too H^1(\glk,GL_n(L)) \too H^1(\glk,\aut_L({\kk}_L))$. By Hilbert 90, we have $H^1(\glk,GL_n(L)) = \{1\}$, so any class $(a) \in H^1(\glk,G)$ is mapped to the class of the trivial twist in $H^1(\glk,\aut_L({\kk}_L))$, i.e.,  $\prsup{a}{\kk}$ is $K$-isomorphic to $\kk$.
\end{proof}

\begin{rem}
\Cref{prop:twisting-C} is a special case (the ``split'' case) of a more general procedure whereby one can twist a construction for some $\Omega \in \Et(n,L/K)$ (not necessarily isomorphic to $K^n$) to get a construction for $\prsup{a}{\Omega}$. More precisely, given a construction $\scrc \ceq (\kk,\X',\D',\ul{P})$ for $\Omega$ and a finite $\glk$-subgroup $G \subset \aut_L(\kk_L)$, we define $\scrc$ to have symmetry by $G$ if $G$ acts on all three objects $(\kk_L,\X'_L,\D'_L)$ compatibly, and each action is given by a $\glk$-group homomorphism $G \too \aut_L(-)$. This gives rise to a homomorphism of $\glk$-groups $G \too \aut_L(\Omega_L)$ (which in the case $\Omega = K^n$ reduces to the representation $G \too S_n$ in \ref{emp:G-action}), so we may again define the set $\Et(\scrc,L/K) = \{\prsup{a}{\Omega} \mid a \in Z^1(\glk,G)\}$. With this setup, the analogous conclusion to \Cref{prop:twisting-C} follows, i.e. each twist $\prsup{a}{\scrc}$ is a construction for $\prsup{a}{\Omega}$. 
\end{rem}


\section{Twisted forms of the moduli space $M_{g,n}$}\label{sec:moduli}

For this section, we fix positive integers $g$ and $n$ such that $2g+n\geqslant 5$ and $(g,n) \neq (2,1)$. 

\begin{emp}
Let $\calm_{g,n}$ denote the functor which sends a $K$-scheme $S$ to the set of pairs $(C,\ul{P})$, where $C/S$ is a \defi{relative nice curve} of genus $g$ (i.e., $C \too S$ is smooth and projective, with fibers nice curves of genus $g$) and $\ul{P}: (\spec K^n)_S \inj C$ is a closed immersion of $S$-schemes. The immersion $\ul{P}$ is equivalent to the data of $n$ sections $P_i:S \inj C$, and the pair $(C,\ul{P})$ is usually denoted $(C,P_1,\dotsc,P_n)$ in the literature. %

Deligne--Mumford~\cite{deligne-mumford} proved that there exists a quasi-projective $K$-variety $M_{g,n}$, of dimension $3g-3+n$, which is a coarse moduli space for $\calm_{g,n}$. Thus, any pair $(C,\ul{P}) \in \calm_{g,n}(S)$ defines an $S$-valued point $S \too M_{g,n}$, which we denote by $[(C,\ul{P})] \in M_{g,n}(S)$. 

The assumptions on $(g,n)$ at the start of the section imply, by a result of 
Fantechi and Massarenti~\cite{fantechi-massarenti}, that $\aut_K(M_{g,n}) = \aut_{\ol{K}}(M_{g,n})_{\kbar} = S_n$; here, $S_n$ acts naturally on $M_{g,n}$, with $h\in S_n$ sending a point $[(C,\ul{P})] \in M_{g,n}(S)$ to the point $[(C,\ul{P} \circ h)] \in M_{g,n}(S)$. So, for any finite Galois extension $L/K$, we have a bijection
\begin{equation}\label{eq:H1-twist-Mgn}
    H^1(\glk,S_n) \iso \dfrac{\{\textup{$L/K$-twists of $M_{g,n}$}\}}{\textup{$K$-isomorphism}}. 
\end{equation}
We note below that the twists $\prsup{a}{M_{g,n}}$ again admit a modular interpretation. 
\end{emp}


\begin{lem}\label{lem:MgOmega}
Let $a \in \Hom(\glk,S_n)$ be a cocycle and put $\Omega \ceq \prsup{a}{(K^n)}$. Then, the twist
$M_{g,\Omega/K} \ceq \prsup{a}{M_{g,n}}$
is a coarse moduli space for the functor $\calm_{g,\Omega/K}$ which sends a $K$-scheme $S$ to the set of pairs $(C,\ul{Q})$, where $C/S$ is a relative nice curve of genus $g$ and $\ul{Q}:(\spec \Omega)_S \inj C$ is a closed immersion. 
\end{lem}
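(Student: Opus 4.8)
The plan is to realize $\calm_{g,\Omega/K}$ as the twist $\prsup{a}{\calm_{g,n}}$ of the functor $\calm_{g,n}$ by the cocycle $a$ — using the $S_n$-action on $\calm_{g,n}$ under which $h\in S_n$ sends $(C,\ul P)$ to $(C,\ul P\circ h)$ — and then to transport the coarse moduli property of $M_{g,n}$ across the twisting operation $\prsup a(-)$, which is an equivalence of categories (this being the functorial content of effective Galois descent along $L/K$, applicable both to the quasi-projective scheme $M_{g,n}$ and to functors on $K$-schemes). The first task is to identify $\prsup{a}{\calm_{g,n}}$ with $\calm_{g,\Omega/K}$ as $K$-functors. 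I would fix the $L$-isomorphism $\phi_a\colon \spec L^n \iso \spec \Omega_L$ attached to the twist $\Omega = \prsup{a}{(K^n)}$ (cf. \ref{emp:twisting} and \ref{emp:Et-n-L/K}), characterized by $\phi_a\circ s_a(\sigma)=\sigma\circ\phi_a$ with $s_a(\sigma)=a_\sigma\circ\sigma$. For an $L$-scheme $S$, transporting a closed immersion $(\spec K^n)_S\inj C$ along $(\phi_a)_S$ gives a closed immersion $(\spec\Omega)_S\inj C$, and this produces an isomorphism of $L$-functors $(\calm_{g,n})_L\iso(\calm_{g,\Omega/K})_L$; the cocycle relation above shows that it carries the natural $\glk$-descent datum on $(\calm_{g,\Omega/K})_L$ to the $a$-twisted datum on $(\calm_{g,n})_L$, and descent then yields the asserted isomorphism over $K$.

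Next I would twist the structure morphism. The natural transformation $\pi\colon \calm_{g,n}\to\underline{M_{g,n}}$ exhibiting $M_{g,n}$ as a coarse moduli space is $S_n$-equivariant: since a $K$-morphism of $K$-varieties is determined by its $\kbar$-points, it suffices that both $\pi$ composed with the source action and the target action composed with $\pi$ induce $[(C,\ul P)]\mapsto[(C,\ul P\circ h)]$ on $\kbar$-points, which is exactly how the $S_n$-action on $M_{g,n}$ is described. Applying the functor $\prsup a(-)$ to $\pi$ then produces a natural transformation $\calm_{g,\Omega/K}\cong\prsup a{\calm_{g,n}}\to\underline{\prsup a M_{g,n}}=\underline{M_{g,\Omega/K}}$, and it remains to verify the two coarse-moduli axioms for it. For bijectivity on $\kbar$-points: base change to $\kbar$ trivializes $\Omega$ and the twist, so the map becomes $\pi(\spec\kbar)\colon\calm_{g,n}(\kbar)\iso M_{g,n}(\kbar)$, a bijection by hypothesis. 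For the universal property over $K$: given a $K$-scheme $N$ and a natural transformation $\theta\colon\calm_{g,\Omega/K}\to\underline N$, equip $N$ with the trivial $S_n$-action so that $\prsup a\underline N=\underline N$; by invertibility of twisting, $\theta=\prsup a{\theta_0}$ for a unique $\theta_0\colon\calm_{g,n}\to\underline N$ over $K$, the universal property of $\pi$ factors $\theta_0$ uniquely through a $K$-morphism $M_{g,n}\to N$, and twisting that morphism back factors $\theta$ uniquely through $M_{g,\Omega/K}$, as required.

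The hard part is essentially careful bookkeeping rather than anything deep: one must check that the twist of the datum ``a closed $S$-immersion of $(\spec K^n)_S$ into $C$'' is literally ``a closed $S$-immersion of $(\spec\Omega)_S$ into $C$'' — in particular that the $\glk$-action identifying the two is built from precisely the homomorphism $\glk\to S_n$ underlying the cocycle $a$ — and one must make sure that invoking ``$\prsup a(-)$ is an equivalence of categories'' is legitimate for the moduli functors and for the quasi-projective scheme $M_{g,n}$, i.e. that Galois descent along $L/K$ is effective in this setting. The conceptual point that makes the argument go through is simply that a coarse moduli space is pinned down by a universal property together with a bijectivity condition on $\kbar$-points, and both of these are preserved under an equivalence of categories.
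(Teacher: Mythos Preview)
Your argument is correct and rests on the same core idea as the paper: use the $L$-isomorphism $\phi_a\colon \spec L^n \iso \spec \Omega_L$ to transport between $K^n$-markings and $\Omega$-markings over $L$, check that this intertwines the natural $\glk$-action on the $\Omega$-side with the twisted-by-$a$ action on the $K^n$-side, and descend. The packaging differs. The paper argues concretely and minimally: given $(C,\ul Q)\in\calm_{g,\Omega/K}(S)$, it sets $\ul P\ceq \ul Q\circ(\phi_a,\id)$, verifies directly that $\prsup{\sigma}{\ul P}=\ul P\circ a_\sigma$, and concludes that $[(C_L,\ul P)]\colon S_L\to (M_{g,n})_L$ is equivariant for the twisted action and therefore descends to $[(C,\ul Q)]\colon S\to M_{g,\Omega/K}$; the coarse moduli axioms are left implicit. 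You instead twist the functor and the natural transformation $\pi$ globally, then explicitly verify bijectivity on $\kbar$-points and the universal property. Your route is more categorical and more complete; the paper's is shorter and hands-on. One small point worth tightening in your write-up: in the universal-property step, the factoring morphism $M_{g,n}\to N$ is not a priori $S_n$-equivariant, so ``twisting it back'' is best phrased as a descent argument---the uniqueness in the factorization over $L$ forces compatibility with the twisted descent datum on $(M_{g,n})_L$, hence the morphism descends to $M_{g,\Omega/K}\to N$.
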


\begin{proof}
Let $S$ be a $K$-scheme and let $(C,\ul{Q}) \in \calm_{g,\Omega/K}(S)$. Recall from \ref{emp:twisting} that there is an $L$-isomorphism $\phi_a: \spec L^n \iso \spec \Omega_L$ which satisfies $\phi_a \circ a_{\sigma} = \prsup{\sigma}{\phi_a}$. This induces an $S_L$-isomorphism $(\phi_a,\id) : (\spec L^n)_S \iso (\spec \Omega_L)_S$ (the products here are taken over $K$). If we define $\ul{P} : (\spec L^n)_S \inj C_L$ to be the composition $\ul{Q} \circ (\phi_a,\id)$, then we obtain a commutative diagram
\begin{equation*}
    \begin{tikzcd}
        (\spec L^n)_S \arrow[rr, "{(\phi_a,\id)}"] \arrow[dr, "\ul{P}"'] && (\spec \Omega_L)_S \arrow[dl, "\ul{Q}"]\\
        & C_L. 
    \end{tikzcd}
\end{equation*}
Note that $\ul{P}$ is $\glk$-equivariant when $\glk$-acts on $\spec L^n$ by the twisted-by-$a$ action, and on $C_L$ by the natural action. In other words,  $\ul{P} \circ a_{\sigma}\circ \sigma = \sigma \circ \ul{P}$, and hence,  $\prsup{\sigma}{\ul{P}} = \ul{P} \circ a_{\sigma}$. It follows that the morphism $[(C_L,\ul{P})] : S_L \too (M_{g,n})_L$ is $\glk$-equivariant when $\glk$ acts on $(M_{g,n})_L$ via the twisted-by-$a$ action $\sigma \mapsto a_{\sigma} \circ \sigma$. Taking quotients by $\glk$, $[(C_L,\ul{P})]$ descends to the desired morphism $[(C,\ul{Q})]:S \too M_{g,\Omega/K}$. 
\end{proof}


\begin{thm}\label{thm:symmetry-Mgn}
Let $\scrc \ceq (\kk,\X_{\kk},\D_{\kk},\ul{P})$ be a genus $g$ construction for $K^n/K$ with symmetry by $G \subset \aut_K(\kk)$ (as in \Cref{prop:twisting-C}). Let $G$ act on $M_{g,n}$ via the representation $G \too S_n$ from \ref{emp:G-action}. Then, the morphism $[(\X_{\kk},\ul{P})]:\spec \kk \too M_{g,n}$ is $G$-equivariant.  
\end{thm}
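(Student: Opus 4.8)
The plan is to verify the asserted $G$-equivariance one group element at a time. Write $\sigma_g\in\aut_K(\spec\kk)$ for the automorphism through which $g\in G$ acts on $\spec\kk$, let $h_g\in S_n=\aut_K(M_{g,n})$ be the image of $g$ under the permutation representation $G\too S_n$ of \ref{emp:G-action}, and set $f\ceq[(\X_\kk,\ul P)]$. Equivariance amounts to the equality, for every $g$, of the two morphisms $\spec\kk\too M_{g,n}$ obtained by precomposing $f$ with $\sigma_g$ and by postcomposing $f$ with the automorphism of $M_{g,n}$ induced by $h_g$. By naturality of the transformation $\calm_{g,n}\too\Hom(-,M_{g,n})$ exhibiting $M_{g,n}$ as a coarse moduli space, the first of these classifies the pullback family $\sigma_g^*(\X_\kk,\ul P)\in\calm_{g,n}(\kk)$; by the modular description of the $S_n$-action recalled above, the second classifies $(\X_\kk,\ul P\circ h_g)\in\calm_{g,n}(\kk)$. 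Since isomorphic families over $\spec\kk$ induce the \emph{same} morphism to a coarse moduli space, the proof reduces to exhibiting an isomorphism $\sigma_g^*(\X_\kk,\ul P)\cong(\X_\kk,\ul P\circ h_g)$ in $\calm_{g,n}(\kk)$.

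To build this isomorphism I would unwind the symmetry hypothesis (\ref{emp:symmetry}): it supplies, for each $g$, an automorphism $g_\X$ of $\X_\kk$ lying over $\sigma_g$ that stabilizes $\D_\kk$ and restricts there to the natural action $(\id,g)$; equivalently, by the universal property of the fibre product, $g_\X$ is a $\kk$-isomorphism $\beta_g:\X_\kk\iso\sigma_g^*\X_\kk$. In the same way, \Cref{lem:G-action} tells us that $\ul P:(\spec K^n)_\kk\iso\D_\kk$ intertwines the \emph{diagonal} $G$-action $g\mapsto(g,g)$ with the natural action on $\D_\kk$; reading this through the fibre-product formalism, the diagonal action encodes an isomorphism $(\spec K^n)_\kk\iso\sigma_g^*(\spec K^n)_\kk$ that differs from the tautological ``constant-scheme'' identification of $\sigma_g^*(\spec K^n)_\kk$ with $(\spec K^n)_\kk$ precisely by the automorphism $h_g$ of $\spec K^n$ --- this is the single place where the permutation enters. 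Transporting $\ul P$ through $\beta_g$ and this identification and using the equivariance relation $g_\X\circ\ul P=\ul P\circ(g,g)$ to cancel $g_\X$, one finds that $\beta_g$ carries $\sigma_g^*\ul P$ to $\ul P\circ h_g$; since the inclusion $\D_\kk\hookrightarrow\X_\kk$ is $G$-stable it contributes nothing beyond viewing $\ul P$ as a closed immersion into $\X_\kk$. This is exactly the isomorphism of families required above.

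The one delicate point --- really the only place genuine work is needed --- is keeping the base-change bookkeeping honest: one must carefully distinguish the two natural identifications of $\sigma_g^*(\spec K^n)_\kk$ with $(\spec K^n)_\kk$, the constant-scheme one and the one coming from the diagonal $G$-action, because their discrepancy is exactly the permutation $h_g$ that the statement predicts, and aligning this with the conventions in force (for the $\aut_K(-)\leftrightarrow\aut(\spec -)$ identification and for the $S_n$-action on $M_{g,n}$) takes a little care. One must also remember that $M_{g,n}$ is only a \emph{coarse} moduli space, so the argument cannot be run by a direct Yoneda computation; it has to pass through the natural transformation $\calm_{g,n}\too\Hom(-,M_{g,n})$ and the modular characterization of $\aut_K(M_{g,n})=S_n$. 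Everything else is formal manipulation with \Cref{lem:G-action} and the functoriality of base change.
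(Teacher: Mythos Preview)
Your proposal is correct and follows essentially the same approach as the paper. The paper's proof is a single commutative diagram (with \Cref{lem:G-action} supplying the left square and the symmetry hypothesis the right), from which the reader is expected to extract exactly the isomorphism $\sigma_g^*(\X_\kk,\ul P)\cong(\X_\kk,\ul P\circ h_g)$ you spell out; your version simply makes explicit the moduli-theoretic and base-change bookkeeping that the paper leaves to the reader.
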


\begin{proof}
This follows because, for all $g\in G$, the following diagram commutes (by \Cref{lem:G-action}):
    \begin{equation*}
    \begin{tikzcd}
        (\spec K^n)_\kk \arrow[rr, hook, "{\ul{P}}"]  \arrow[d, "{(\id,g)}"] && \X_{\kk} \arrow[d, "{(\id,g)}"] \arrow[r] & \spec \kk \arrow[d, "g"],\\
        (\spec K^n)_\kk \arrow[rr, hook, "{\ul{P} \circ g}"] && \X_{\kk} \arrow[r] & \spec \kk. 
    \end{tikzcd} \qedhere
    \end{equation*}
\end{proof}

\begin{emp}\label{emp:moduli-point-p}
\textbf{Moduli points.}
Let $p_{\scrc} \in M_{g,n}$ denote the image of $[(\X_{\kk},\ul{P})] \in M_{g,n}(\kk)$; we call this the \defi{moduli point} for $\scrc$. More generally, giving a genus $g$ construction $\scrc \ceq (\ff,\X',\D',\ul{Q})$ for $\Omega/K$ is equivalent to giving a section $(\X',\ul{Q})$ of $\calm_{g,\Omega/K}(\ff)$ (by viewing $\ul{Q}$ as a closed immersion $\ul{Q} : (\spec \Omega)_{\ff} \inj \X'$ with image $\D'$), and we define the moduli point for $\scrc$, denoted $p_{\scrc} \in M_{g,\Omega/K}$, to be the image of  $[(\X',\ul{Q})]\in M_{g,\Omega/K}(\ff)$. 

\Cref{prop:twisting-C} can now be understood in the context of these moduli spaces as follows. Twisting $[(\X_{\kk},\ul{P})]: \spec \kk \too M_{g,n}$ by a cocycle $a \in \Hom(\glk,G)$ gives the morphism  $[(\X_{\ff},\prsup{a}{P})]: \spec \ff \too M_{g,\Omega/K}$ associated to the twist $\prsup{a}{\scrc} \ceq (\ff,\X_{\ff},\D_{\ff},\prsup{a}{\ul{P}})$ (cf. \Cref{prop:twisting-C}). In particular, the $L$-isomorphism $\phi_a: (M_{g,n})_L \iso (M_{g,\Omega/K})_L$ sends the moduli point $p_{\scrc}$ to the moduli point $p_{\prsup{a}{\scrc}}$. 
\end{emp}

\begin{emp}
\textbf{Stabilizers.} In \Cref{thm:symmetry-Mgn}, observe that  the stabilizer $\stab(p_{\scrc})$ contains the image $H$ of $G$ in $S_n$. Thus, constructions $\scrc$ with large symmetry groups give rise to points $p_{\scrc} \in M_{g,n}$ with large stabilizers. It appears to be that case that as $n$ gets larger, the symmetry in constructions for $K^n/K$ gets smaller. Moreover, modifications of existing constructions that increase the number of points often come at the expense of symmetry, as we now illustrate.
\begin{enumerate}[(a)]
    \item The constructions of Nagao~\cite{nagao13} and~\cite{kihara14} modify Mestre's construction~\cite{mestre11} to obtain elliptic curves $E/\Q(t)$ with rank $13$ and $14$, respectively, but these constructions no longer possess the $A_4$-symmetry of~\cite{mestre11}. 
    \item N\'eron's construction outlined in \Cref{exmp:shioda-11} actually begins with a construction of rank $8$ elliptic curves with symmetry by $S_8$, and then modifies this to acquire three new points at the expense of the $S_8$-symmetry.
    \item The construction $\scrc_3$ in \eqref{eq:MS-construction} is a genus $g$ construction ($g\geqslant 2$) for $K^{4g+8}/K$ which can be thought of as a refinement of the Mestre--Shioda Construction (denoted $\scrc_1$ in \eqref{eq:MS-construction}), which is a genus $g$ construction for $K^{4g+6}/K$. However, whereas the latter has symmetry by $S_{4g+6}$, the former has symmetry by the wreath product $\mu_2 \wr S_{2g+4} \subset S_{4g+8}$ (cf. \Cref{sec:det-Et-scrc}).
\end{enumerate}
\end{emp}

\begin{emp}\label{emp:versality}
\textbf{Versality.} 
We call a point $p \in M_{g,n}$ a \defi{unirational point} if its closure $U \subset M_{g,n}$ is a unirational $K$-variety; we call $p$ \defi{very versal} with respect to a subgroup $H \subset S_n$ if $\stab(p)$ contains $H$ and there exists an $H$-equivariant morphism $\spec \ff \too \{p\}$, where $\ff/K$ is a rational function field on which the $H$-action is linear (cf. \ref{emp:linear}). This is equivalent to saying that the $H$-action on $U$ is very versal in the sense of~\cite{versality}*{Page~500}. 

In \Cref{thm:symmetry-Mgn}, if the $G$-action on $\kk$ is linear (cf. \ref{emp:linear}), then the moduli point $p_{\scrc}$ is very versal with respect to $H$ (the image of the associated representation $G \too S_n$). It follows that \emph{for every $a\in \Hom(\glk,G)$, putting $\ff \ceq \prsup{a}{\kk}$ and $\Omega \ceq \prsup{a}{(K^n)}$ as usual, the moduli point  for the twisted construction $\prsup{a}{\scrc}$ is a unirational point of $M_{g,\Omega/K}$.} (Indeed, twisting $[(\X_{\kk},\ul{P})]: \spec \kk \too \{p_{\scrc}\}$ gives the morphism $[(\X_{\ff},\prsup{a}{\ul{P}})]: \spec \ff \too \{p_{\prsup{a}{\scrc}}\}$, and $\ff/K$ is again a rational function field by \Cref{lem:hilbert 90}.)
\end{emp}

\begin{exmp}
For the construction $\scrc$ in \Cref{exmp:joshi}, the $G$-action on $\kk = K(\p_2^9)$ is linear and the moduli point $p_{\scrc}$ is the generic point of $M_{1,9}$, so the above discussion serves to show that \emph{every twisted form of $M_{1,9}/K$ is unirational} (see~\cite{versality}*{Theorem~6.1} for more details). Similarly, for the Mestre--Shioda Construction, denoted $\scrc_1$ in \Cref{sec:mestre-shioda}, the moduli point $p_{\scrc_1} \in M_{g,4g+6}$ is very versal with respect to $S_{4g+6}$. 
\end{exmp}
 
\begin{rem}\label{rem:M-1-n}
Belorousski~\cite{belorousski} proved that $M_{1,n}$ is rational over $\C$ for $n=1,\dotsc,10$ (see~\cite{bini-fontanari}*{Section~1} for a succinct exposition). We are unsure whether this rationality holds over $\Q$, and we also do not know if the twists of $M_{1,10}$ enjoy unirationality properties as in the example above. At any rate, $M_{1,10}$ contains the Mestre--Shioda moduli point $p_{\scrc_1}$ which is very versal with respect to $S_{10}$ (although one can show that $\codim(p_{\scrc_1}) \geqslant 4$).

Bini and Fontanari~\cite{bini-fontanari}*{Theorem~3} compute the Kodaira dimension $\kappa(\ol{M}_{1,n})$ for all $n\geqslant 11$ and find an interesting dichotomy-- they find that $\kappa(\ol{M}_{1,11}) = 0$, and that $\kappa(\ol{M}_{1,n}) = 1$ for all $n\geqslant 12$\footnote{As usual, $\ol{M}_{g,n}$ denotes the moduli space of $n$-pointed stable genus $g$ curves.}. In particular, $M_{1,n}/K$ fails to be unirational for $n\geqslant 11$. In (what appears to be) an amusing numerical coincidence, $11$ is the smallest value of $\deg \Omega$ for which we do not know how to realize $\V{\Omega}{K}$ in an elliptic curve over $K$ (see \Cref{exmp:main} \ref{exmp:main:rohrlich-matsuno} and \Cref{rem:elliptic}). So, we conclude this section with a natural question in this direction. 
\end{rem}

\begin{quest}\label{quest:M1-11}
Does there exist a point $p\in M_{1,11}$ which is very versal with respect to a \emph{transitive} subgroup $H \subset S_{11}$? What if we replace $11$ with some $n\geqslant 12$? 
\end{quest}

\section{The Mestre--Shioda Construction}\label{sec:mestre-shioda}

\begin{emp}\label{emp:setup}
In this section we describe the Mestre--Shioda construction, the starting point of which is \Cref{lem:sqroot} below. For the rest of the article, $K$ denotes a field of characteristic different from $2$, $d$ is a fixed positive integer, and we put
\begin{equation*}
    n \ceq 2d+2.
\end{equation*}
\end{emp}

{ 
\begin{lem}\label{lem:sqroot}
Let $\mbf{m},\mbf{h}$ and $\Bell$ denote the tuples of indeterminates $m_0,\dotsc,m_{n-1}, h_0,\dotsc,h_d$ and $\ell_0,\dotsc,\ell_d$, respectively, and define the polynomials
\begin{equation}
    \begin{alignedat}{3}
        m(x) & \ceq  x^n + m_{n-1}x^{n-1} && + \dotsb + m_0 && \in K[\mbf{m}][x],\\
        h(x) & \ceq  \quad x^{d+1} + h_{d}x^{d} && + \dotsb + h_0 && \in K[\mbf{h}][x],\\
        \ell(x) & \ceq  \qquad\qquad \ell_{d}x^{d} && + \dotsb + \ell_0 && \in K[\Bell][x].
    \end{alignedat}
\end{equation}
Then, the homomorphism $\phi_d: K[\mbf{m}] \too K[\mbf{h},\Bell]$ defined by equating coefficients in the identity
\begin{equation}\label{eq:sqrt-approx}
    m(x) = h(x)^2 - \ell(x)
\end{equation}
is a $K$-algebra isomorphism. 
\end{lem}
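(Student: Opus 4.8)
The plan is to prove that $\phi_d$ is surjective and then deduce injectivity formally; surjectivity comes from a triangular recursion in which the hypothesis $\chr K \neq 2$ is used essentially.

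First I would record the shape of the system obtained by equating coefficients of $x^k$ in \eqref{eq:sqrt-approx}. Since $h(x)^2$ is monic of degree $n = 2d+2$ while $\deg \ell \leq d$, the coefficients of $x^n, x^{n-1}, \dots, x^{d+1}$ in $h(x)^2 - \ell(x)$ agree with those of $h(x)^2$, whereas the coefficients of $x^d, \dots, x^0$ are $c_k(h) - \ell_k$, where $c_k(h) \in K[\mbf{h}]$ denotes the coefficient of $x^k$ in $h(x)^2$ (with the convention $h_{d+1} = 1$). Thus $\phi_d$ is the $K$-algebra map with
\begin{equation*}
\phi_d(m_k) = c_k(h) \ \ (d+1 \leq k \leq n-1), \qquad \phi_d(m_k) = c_k(h) - \ell_k \ \ (0 \leq k \leq d).
\end{equation*}

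The key elementary computation is that for $1 \leq j \leq d+1$ one has
\begin{equation*}
c_{n-j}(h) = 2\,h_{d+1-j} + Q_j(h_{d+2-j}, \dots, h_d),
\end{equation*}
where $Q_j$ is a homogeneous degree-$2$ polynomial (and $Q_1 = 0$); this is immediate from $h(x)^2 = \sum_{a+b} h_a h_b\, x^{a+b}$ upon isolating the two terms in which one index equals $d+1$. Because $\chr K \neq 2$, this yields a recursion inside $\im\phi_d$: starting from $h_d = \tfrac12\phi_d(m_{n-1})$, and assuming $h_{d+2-j}, \dots, h_d \in \im\phi_d$, we get $h_{d+1-j} = \tfrac12\bigl(\phi_d(m_{n-j}) - Q_j(h_{d+2-j},\dots,h_d)\bigr) \in \im\phi_d$. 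Hence $h_0, \dots, h_d \in \im\phi_d$, and then $\ell_k = c_k(h) - \phi_d(m_k) \in \im\phi_d$ for $0 \leq k \leq d$ as well, so $\phi_d$ is surjective. (The same recursion, read as formulas, explicitly defines a candidate inverse $\psi_d : K[\mbf{h},\Bell] \to K[\mbf{m}]$, if one prefers a constructive argument to the dimension count below.)

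Finally I would upgrade surjectivity to bijectivity: $\phi_d$ is a surjective $K$-algebra homomorphism between two polynomial rings in $n$ variables over $K$, so its kernel is a prime $\mathfrak{p} \subset K[\mbf{m}]$ with $K[\mbf{m}]/\mathfrak{p} \cong K[\mbf{h},\Bell]$ of Krull dimension $n$. Since $K[\mbf{m}]$ is an $n$-dimensional domain finitely generated over a field, the dimension formula for affine domains gives $\dim K[\mbf{m}]/\mathfrak{p} + \mathrm{ht}\,\mathfrak{p} = n$, whence $\mathrm{ht}\,\mathfrak{p} = 0$ and $\mathfrak{p} = 0$. Therefore $\phi_d$ is a $K$-algebra isomorphism. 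The only point requiring care is the index bookkeeping of the triangular recursion — confirming that $c_{n-j}(h)$ depends on $h_{d+1-j}$ linearly with coefficient $2$ and otherwise only on $h_{d+2-j},\dots,h_d$ — and beyond that the argument is routine.
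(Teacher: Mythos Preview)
Your proof is correct and takes essentially the same approach as the paper: both use the triangular recursion (solving successively for $h_d, h_{d-1}, \dots, h_0$, then for $\ell_d, \dots, \ell_0$) to express each generator of $K[\mbf{h},\Bell]$ in terms of the $\phi_d(m_k)$'s. The only difference is cosmetic: the paper phrases this as directly constructing $\phi_d^{-1}$, while you phrase it as proving surjectivity and then append a Krull-dimension argument to deduce injectivity --- but you yourself note that the recursion already furnishes the inverse explicitly, which is exactly the paper's route.
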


\begin{proof}
We construct the inverse $\phi_d\inv$ by by sequentially ``solving'' for the variables $h_{d},\dotsc,h_0,\ell_{d},\dotsc,\ell_0$ (in terms of the $m_i$'s) as follows. We have $h_{d} = m_{n-1}/2$. For $j = 2,\dotsc, d+1$, the coefficient of $x^{n- j}$ in $h(x)^2$ is of the form $2h_{d+1-j} + g_j$ for some polynomial $g_j \in K[h_{d+2-j},\dotsc, h_{d}]$, and the coefficient of $x^{n- j}$ in $-\ell(x)$ is $0$. So, we have $m_{n-j} = 2h_{d+1-j} + g_j$, which yields 
\begin{equation*}
    h_{d+1-j} = (m_{n - j} - g_j)/{2}, \qquad j = 2,\dotsc , d+1.
\end{equation*}
This expresses each coefficient of $h(x)$ as a polynomial in the $m_i$. Next, for $0 \leqslant i \leqslant d$, the coefficient of $x^i$ in  $h(x)^2 - \ell(x)$ is of the form $q_i - \ell_i$, where $q_i \in K[h_0,\dotsc , h_{d}]$, which gives
\begin{equation*}
    \ell_i = g_i - m_i, \quad \quad i=0,\dotsc,d.
\end{equation*}
This expresses each coefficient of $\ell(x)$ as a polynomial in the $m_i$, and concludes the proof that $\phi_d$ is an isomorphism.
\end{proof}}

\begin{emp}\label{emp:curve-div}
We identify the $n$-dimensional rational function field $K(\mbf{h},\Bell)$ with the field
\begin{equation*}
    \mm \ceq K(\mbf{m}) = K(m_0,\dotsc,m_{n-1})
\end{equation*}
using the isomorphism $\phi_d:[\mbf{m}] \iso K[\mbf{h},\Bell]$ from \Cref{lem:sqroot}, and in this way, we view $h(x),\ell(x),$ and $m(x)$ as polynomials in $\mm[x]$ satisfying the identity \eqref{eq:sqrt-approx}. In the table below, we define three hyperelliptic curves $\X_i/\mm\; (i=1,2,3),$ each equipped with an effective divisor $\D_i$. The three polynomials $\ell(x), x\ell(x),$ and $\ell(x^2)$ are separable of degree $d,d+1,$ and $2d$, respectively. Using the genus-degree formula for hyperelliptic curves, we express $d$ in terms of the genera $g_i$ of the curves in the fourth and sixth column. The formulas for $\deg \D_i$ in terms of $g_i$ are proved in \Cref{lem:basic-facts} below.
\begin{table}[H]
\centering
\renewcommand{\arraystretch}{1.35}
\adjustbox{max width=\textwidth}{
\caption{Curves with effective divisors}\label{tab:curve-divisor}
\begin{tabular}{|c|c|c|c|c|c|c|} 
\hline
\multicolumn{3}{|c|}{\phantom{\quad}} & \multicolumn{2}{c|}{$d$ is odd} & \multicolumn{2}{c|}{$d$ is even}\\
\hline
$i$ & Curve $\X_i/\mm$ &  Divisor $\D_i$  & $d$ & $\deg \D_i$ & $d$ & $\deg \D_i$\\ 
\hline
$1$ & $y^2 = \ell(x)$ &  $\divzero{y-h(x)}$ & $2g_1 + 1$ & $4g_1+4$ & $2g_1 + 2$ & $4g_1+6$ \\
\hline
$2$ & $y^2 = x\ell(x)$ &  $\divzero{m(x)}$ & $2g_2+1$ & $8g_2+8$ & $2g_2$ & $8g_2+4$ \\
\hline
$3$ & $y^2 = \ell(x^2)$ & $\divzero{y-h(x^2)}$ & $g_3+1$ & $4g_3+8$ & $g_3+1$ & $4g_3+8$\\
\hline 
\end{tabular}}
\end{table}
Note that these curves fit into a commutative diagram
\begin{equation}\label{diag:morphisms}
    \begin{tikzcd}
    & \X_3 \arrow[dl, "\varphi_1"'] \arrow[d, "{\pi_3}"] \arrow[dr, "\varphi_2"] & & &(x,y) \arrow[dl, mapsto] \arrow[d, mapsto] \arrow[dr, mapsto] &\\
    \X_1 \arrow[dr, "\pi_1"'] & \p^1_{\mm} \arrow[d, "s"] & \X_2 \arrow[dl, "\pi_2"] & (x^2,y) \arrow[dr, mapsto] & x \arrow[d, mapsto] & (x^2,xy) \arrow[dl, mapsto]\\
    & \p^1_{\mm} & & & x^2. &
    \end{tikzcd}
\end{equation}
Next, we define the following two \'etale $\mm$-algebras of degree $n$ and $2n$, respectively\footnote{Saying that $\ee/\mm$ and $\wt{\ee}/\mm$ are \'etale is equivalent to saying that $m(x)$ and $m(x^2)$ are separable.}:
\begin{equation*}\label{eq:ee}
    \ee \ceq \mm[x]/(m(x)), \qquad \wt{\ee} \ceq  \mm[x]/(m(x^2)).
\end{equation*}
\end{emp}

\begin{lem}\label{lem:basic-facts}
The diagram \eqref{diag:morphisms} restricts to a commutative diagram of $\mm$-schemes (with the labels indicating whether the arrows are isomorphisms or finite of degree $2$):
    \begin{equation}\label{diag:E-D}
    \begin{tikzcd}
        & {\D_3} \arrow[dl, "2"'] \arrow[d,"\mathclap{\resizebox{0.30cm}{0.11cm}{$\sim$}}",sloped] \arrow[dr,"\mathclap{\resizebox{0.45cm}{0.1cm}{$\sim$}}",sloped] &\\
        \D_1 \arrow[dr, "\mathclap{\resizebox{0.45cm}{0.1cm}{$\sim$}}"',sloped] & \spec \wt{\ee} \arrow[d, "2"] & \D_2 \arrow[dl, "2"]\\
        & \spec \ee, &
    \end{tikzcd}
    \end{equation}
    In particular, $\D_1,\D_2$ and $\D_3$ are divisors of type $\ee,\wt{\ee},$ and $\wt{\ee}$, respectively. 
\end{lem}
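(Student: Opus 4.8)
The plan is to compute each divisor $\D_i$ explicitly inside an affine chart of its curve, and then to read off, on coordinate rings, the restrictions of the morphisms of \eqref{diag:morphisms}. Throughout I use two inputs: $\chr K\neq 2$, and the coprimality of $h$ and $\ell$ in $\mm[x]$. The latter holds because $\mathrm{Res}_x(h,\ell)$ is a polynomial in the coefficients of $h$ and $\ell$ which does not vanish identically (for instance it is nonzero at $h=x^{d+1}$, $\ell=x^{d}+1$), hence is a nonzero element of $\mm$; consequently $h$ is a unit in $\ee=\mm[x]/(m(x))$, and since $m(0)=m_0\neq 0$ the class of $x$ is a unit in $\ee$ as well.

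First I would pin down the three subschemes. On the affine chart $y^2=\ell(x)$ of $\X_1$, the regular function $y-h(x)$ cuts out $\spec\mm[x,y]/\bigl(y^2-\ell(x),\ y-h(x)\bigr)=\spec\mm[x]/\bigl(h(x)^2-\ell(x)\bigr)=\spec\mm[x]/(m(x))=\spec\ee$. Since $\deg h=d+1>\tfrac12\deg\ell$, the function $h(x)$ has a pole of strictly larger order than $y$ at every point at infinity of $\X_1$, so $y-h(x)$ has only poles there; hence $\D_1=\divzero{y-h(x)}$ is concentrated in the affine chart and equals $\spec\ee$, and $\pi_1$ restricts to the tautological identification $\D_1\iso\divzero{m(x)}_{\p^1_{\mm}}=\spec\ee$. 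The same computation applied to $y-h(x^2)$ on $\X_3\colon y^2=\ell(x^2)$ (where $\deg h(x^2)=2d+2>d=\deg(\ell(x^2))/2$) gives $\D_3=\spec\mm[x]/(m(x^2))=\spec\wt\ee$, with $\pi_3$ restricting to $\D_3\iso\divzero{m(x^2)}_{\p^1_{\mm}}=\spec\wt\ee$. On $\X_2\colon y^2=x\ell(x)$ the polynomial $m(x)$ has a pole at every point at infinity and satisfies $m(0)\neq 0$, so $\D_2=\divzero{m(x)}$ is affine: $\D_2=\spec\mm[x,y]/\bigl(m(x),\ y^2-x\ell(x)\bigr)$, which after reducing $\ell\equiv h^2\pmod{m}$ (cf.\ \eqref{eq:sqrt-approx}) reads $\spec\mm[x,y]/\bigl(m(x),\ y^2-x\,h(x)^2\bigr)$, an algebra free of rank $2$ over $\ee$; here $\pi_2$ restricts to the degree-$2$ projection $\D_2\to\spec\ee$. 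In particular $\deg\D_1=n$ and $\deg\D_2=\deg\D_3=2n$, which give the degree formulas in Table \ref{tab:curve-divisor} after substituting the stated values of $d$.

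Next I would verify, on coordinate rings, that the morphisms of \eqref{diag:morphisms} carry these subschemes into one another as displayed in \eqref{diag:E-D}: $\varphi_1$ pulls back the equation $y-h(\cdot)$ defining $\D_1$ to $y-h(x^2)$, and $\varphi_2$ and $s$ both pull back the equation $m(\cdot)$ defining $\D_2$ (resp.\ $\spec\ee\subset\p^1_{\mm}$) to $m(x^2)$; in each case the result lies in the ideal defining $\D_3$. The induced maps are then transparent: $\varphi_1|_{\D_3}\colon\D_3\to\D_1$ and $s|_{\spec\wt\ee}\colon\spec\wt\ee\to\spec\ee$ both identify with the degree-$2$ inclusion $\ee=\mm[x]/(m(x))\hookrightarrow\mm[x]/(m(x^2))=\wt\ee$ induced by $x\mapsto x^2$, while $\pi_1|_{\D_1}$ and $\pi_3|_{\D_3}$ are the identifications found above. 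Commutativity of \eqref{diag:E-D} is then inherited from commutativity of \eqref{diag:morphisms}.

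The only genuinely non-mechanical point---and I expect it to be the main obstacle---is that $\varphi_2$ restricts to an \emph{isomorphism} $\D_3\iso\D_2$. On coordinate rings this is the homomorphism $\mm[x,y]/\bigl(m(x),\ y^2-x\,h(x)^2\bigr)\to\mm[t]/(m(t^2))$ sending $x\mapsto t^2$ and $y\mapsto t\,h(t^2)$, where I have written $\wt\ee=\mm[t]/(m(t^2))$. I would prove surjectivity: $h(t^2)$ is the image of the unit $h(x)$, hence a unit in $\wt\ee$, so $t=\bigl(t\,h(t^2)\bigr)\cdot h(t^2)\inv$ lies in the image, which must therefore be all of $\wt\ee$; since source and target both have $\mm$-dimension $2n$, this surjection is an isomorphism. (Alternatively, on geometric points the assignment $t_0\mapsto\bigl(t_0^{2},\ t_0\,h(t_0^{2})\bigr)$ is plainly a bijection from the $2n$ square roots of roots of $m$ onto the $2n$ points $(r,\pm h(r)\sqrt{r})$ with $m(r)=0$.) Combined with $\D_1\cong\spec\ee$ and $\D_3\cong\spec\wt\ee$, this yields $\D_2\cong\spec\wt\ee$, establishing the asserted divisor types and confirming every arrow label in \eqref{diag:E-D}.
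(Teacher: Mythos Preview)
Your proof is correct and follows essentially the same route as the paper's: compute $\D_1$ and $\D_3$ in the affine chart and identify them with $\spec\ee$ and $\spec\wt\ee$, then handle $\D_2$ via $\pi_2$. The one genuine difference is in the key step $\varphi_2|_{\D_3}\colon\D_3\iso\D_2$: you argue directly by showing the ring map is surjective (using that $h$ is a unit in $\ee$) and comparing $\mm$-dimensions, whereas the paper simply reads off $\deg\varphi_2=1$ from the commutativity $s\circ\pi_3=\pi_2\circ\varphi_2$ together with $\deg s=\deg\pi_2=2$ and $\pi_3$ an isomorphism. Your approach is slightly more explicit (and your preliminary observation that $h$ and $x$ are units in $\ee$ is a nice touch that also implicitly justifies the étaleness of $\D_2/\ee$), while the paper's degree-count is shorter; both are perfectly fine.
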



\begin{proof}
Put ${\mathbb{O}} \ceq \mm[x,y]/(y-\ell(x),y-h(x)),$
    so that ${\D_1} = \spec \mathbb{O}$. The identity $m(x) = h(x)^2 - \ell(x)$ from \Cref{lem:sqroot} implies that 
    \begin{equation*}
        m(x) = -(y-h(x))(y+h(x)) = 0 \in \mathbb{O}.
    \end{equation*}
    So, the natural map $\mm[x] \too \mathbb{O}$ factors through ${\ee}$. The homomorphism $\ee \too \mathbb{O}$ so obtained (which corresponds to $\pi_1:\D_1 \too \spec \ee$) admits an inverse given by $x \mapsto x, y \mapsto h(x)$, so $\pi_1:\D_1 \iso \spec \ee$ is an isomorphism. The proof for the isomorphism $\pi_3: {\D_3}\iso \spec \wt{\ee}$ is similar. Visibly, the morphisms $s: \spec \wt{\ee} \too \spec \ee$ and $\pi_2:\D_2 \too \spec \ee$ are \'etale of degree $2$. Since $s\circ \pi_3 = \pi_2\circ \varphi_2$, we conclude that ${\varphi_2}:\D_3 \too \D_2$ is an isomorphism. 
\end{proof}

\begin{emp}\label{emp:split-field-points}
\textbf{Splitting fields and points.} Let $\kk$ and $\wt{\kk}$ denote the rational function fields $K(u_1,\dotsc,u_{n})$ and $K(t_1,\dotsc,t_{n})$, respectively. We define field embeddings\footnote{Here, $s_{n-j}(\mbf{u})$ denotes the $(n-j)$-th elementary symmetric polynomial in $u_1,\dotsc,u_{n}$.}
\begin{equation*}
    \begin{alignedat}{3}
    \mm & \inj \kk, \qquad m_{j} && \mtoo s_{n-j}(\mbf{u}), \qquad && \textup{ for }  j=0,\dotsc,n-1,\\
    \kk & \inj \wt{\kk}, \qquad\;   u_i && \mtoo t_i^2, \qquad && \textup{ for } i=1,\dotsc,n.
    \end{alignedat}
\end{equation*}
By construction, the embeddings $\mm \inj \kk \inj \wt{\kk}$ give rise to the factorizations 
\begin{equation}\label{eq:factorization}
    \begin{alignedat}{2}
        m(x^2) &= (x-t_1)(x+t_1)\dotsb (x-t_{r})(x+t_{n}) &&\; \in \wt{\kk}[x],\\
        m(x) &= (x-u_1)\dotsb(x-u_{n}) &&\; \in \kk[x].
    \end{alignedat}
\end{equation}
By \Cref{lem:basic-facts}, $\kk$ (resp. $\wt{\kk}$) is a splitting field for $\D_1$ (resp. $\D_2$ and ${\D_3}$). If $\iota \in \aut \X_2$ denotes the involution $(x,y) \mapsto (x,-y)$ and $\tau \in \aut \X_3$ the involution $(x,y) \mapsto (-x,y)$, then
\begin{equation*}\label{eq:div-points}
    \begin{alignedat}{3}
    \D_1(\kk) &= \{P_1,\dotsc,P_{n}\}, \quad && \textup{where} \quad P_i &&\ceq (u_i,h(u_i)),\\
    \D_2(\wt{\kk}) &=\{Q_1, \iota(Q_1),\dotsc,Q_{n},\iota(Q_{n})\}, \quad && \textup{where} \quad Q_i &&\ceq (u_i,t_ih(u_i)),\\
    {\D_3}(\wt{\kk}) &=\{R_1,\tau(R_1),\dotsc,R_{n},\tau(R_{n})\}, \quad && \textup{where} \quad R_{i} &&\ceq (t_i,h(u_i)).
    \end{alignedat}
\end{equation*}
These points determine isomorphisms $\ul{P},\ul{Q},\ul{R}$ as in \ref{emp:constructions}, and we see that
    \begin{equation}\label{eq:MS-construction}
        \begin{alignedat}{5}
        \scrc_{1} &\ceq (\kk,(\X_1)_{\kk},(\D_1)_{\kk},\ul{P}) &&\; \textup{ is a genus } && g_1 && \textup{ construction for } && K^{n}/K,\\
        \scrc_{2} &\ceq (\wt{\kk},(\X_2)_{\wt{\kk}},(\D_2)_{\wt{\kk}},\ul{Q}) &&\; \textup{ is a genus } && g_2 &&\textup{ construction for } && K^{2n}/K,\\
        {\scrc_3} &\ceq (\wt{\kk},(\X_3)_{\wt{\kk}},(\D_3)_{\wt{\kk}},\ul{R}) &&\; \textup{ is a genus } && g_3 && \textup{ construction for } && K^{2n}/K.
        \end{alignedat}
    \end{equation}
The construction $\scrc_1$ was described in \ref{emp:MSC}.  We think of the constructions $\scrc_2$ and ${\scrc_3}$ as refinements of $\scrc_1$; for example, $\scrc_1$ can be specialized to yield genus $g$ curves $X/K$ with $\#X(K)\geqslant 8g+12$, whereas ${\scrc_3}$ yields genus $g$ curves $X/K$ with $\#X(K)\geqslant 8g+16$. 

Note that if $d=4$ then $\scrc_1$ is a genus one construction for $K^{10}/K$ (i.e., $\X_{\kk}/\kk: y^2 = \ell(x)$ is a genus one curve with $10$ $\kk$-rational points). Similarly, if $d=3$ then $\scrc_2$ is a genus one construction for $K^{16}/K$.
\end{emp}


\section{The dimensions of $\W{\D_1},\W{\D_2},$ and $\W{\D_3}$}

\begin{emp}\label{emp:morphism-jac}
For $i=1,2,3$, let $\J_i/\mm$ denote the Jacobian of $\X_i/\mm$. The morphisms $\varphi_1$ and $\varphi_2$ (see \eqref{diag:morphisms}) give rise to an $\mm$-isogeny $((\varphi_1)_*,(\varphi_2)_*): \J_3 \too \J_1\times \J_2$ (see~\cite{shiodasymmetry}*{Proposition~3}), which induces an isomorphism of $\qgm$-modules
    \begin{equation*}
        ((\varphi_1)_*,(\varphi_2)_*):\J_3(\ol{\mm})_{\Q} \iso \J_2(\ol{\mm})_{\Q} \oplus \J_1(\ol{\mm})_{\Q}.
    \end{equation*}
Since $\varphi_1$ and $\varphi_2$ give surjections ${\D_3}(\ol{\mm}) \surj \D_1(\ol{\mm})$ and ${\D_3}(\ol{\mm}) \surj \D_2(\ol{\mm})$, respectively (by \Cref{lem:basic-facts}), the isomorphism above restricts to an isomorphism of $\qgm$-modules
\begin{equation}\label{eq:VDM-isom}
    \W{\D_3} \iso \W{\D_2} \oplus \W{\D_1}.
\end{equation}
\end{emp}

\begin{thm}[Shioda, 1998]\label{thm:shioda-dim}
We have
\begin{align*}\label{eq:dim-VDM}
    \dim_{\Q} \W{\D_1} &= n-1,\\
    \dim_{\Q} \W{\D_2} &= n,\\
    \dim_{\Q}\W{\D_3} &= 2n-1.\\ 
\end{align*}
\end{thm}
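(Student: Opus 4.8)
The plan is to handle $\W{\D_1}$ directly, then extract $\W{\D_2}$ and $\W{\D_3}$ from the isogeny decomposition \eqref{eq:VDM-isom}, reducing every dimension count to a representation-theoretic statement together with the triviality that two distinct points on a curve of positive genus have distinct classes in its Jacobian. I will assume throughout that each $\X_i$ has positive genus (this excludes only $d\leqslant 2$), since otherwise the Jacobians vanish and the statement is empty. First note the a priori bounds: $\deg\D_1=n$ and $\deg\D_3=2n$ give $\dim_{\Q}\W{\D_1}\leqslant n-1$ and $\dim_{\Q}\W{\D_3}\leqslant 2n-1$, while \eqref{eq:VDM-isom} gives $\dim_{\Q}\W{\D_3}=\dim_{\Q}\W{\D_1}+\dim_{\Q}\W{\D_2}$. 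So it is enough to prove $\dim_{\Q}\W{\D_1}=n-1$ and $\dim_{\Q}\W{\D_2}\geqslant n$, after which all three equalities follow.

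For $\W{\D_1}$: the splitting field $\kk$ of $\D_1$ (equivalently, of $m(x)$, cf. \ref{emp:split-field-points}) is the splitting field of the generic monic degree-$n$ polynomial over $\mm=K(m_0,\dots,m_{n-1})$, so $\gg{\kk/\mm}=S_n$ and $\D_1(\ol{\mm})=\{P_1,\dots,P_n\}$ is the standard permutation $G_{\mm}$-set. Hence $\Q[\D_1(\ol{\mm})]_0$, regarded as a $\qgm$-module, is the standard $(n-1)$-dimensional representation of $S_n$, which is absolutely irreducible and defined over $\Q$. By \Cref{lem:V(D/K)} (applied over $\mm$) there is a $\qgm$-equivariant surjection $\Q[\D_1(\ol{\mm})]_0\surj\W{\D_1}$; its kernel is a $\Q S_n$-submodule of an irreducible module, hence $0$ or everything, and it cannot be everything since $P_1\not\sim P_2$ on the positive-genus curve $\X_1$. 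Thus $\W{\D_1}$ is the standard representation and $\dim_{\Q}\W{\D_1}=n-1$.

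For $\W{\D_2}$ and $\W{\D_3}$: by \Cref{lem:basic-facts}, $\varphi_2$ gives an $\mm$-isomorphism $\D_3\iso\D_2$, so $\Q[\D_3(\ol{\mm})]_0\cong\Q[\D_2(\ol{\mm})]_0$ as $\qgm$-modules and I only need the latter. The splitting field $\wt{\kk}=K(t_1,\dots,t_n)$ of $\D_2$ is obtained from $\kk$ by adjoining $t_i=\sqrt{u_i}$; algebraic independence of the $u_i$ over $K$ forces no nonempty subproduct to be a square in $\kk$, so $[\wt{\kk}:\kk]=2^n$ and $\gg{\wt{\kk}/\mm}$ is the full hyperoctahedral group $\mu_2\wr S_n$, acting on $\D_2(\ol{\mm})=\{Q_i,\iota Q_i\}_{i=1}^n$ by signed permutations, with the $i$-th $\mu_2$ swapping $Q_i\leftrightarrow\iota Q_i$. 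Splitting into $\iota$-invariants and $\iota$-anti-invariants,
\begin{equation*}
\Q[\D_2(\ol{\mm})]\;=\;\Big(\textstyle\bigoplus_{i=1}^n\Q\,(Q_i+\iota Q_i)\Big)\;\oplus\;\Big(\textstyle\bigoplus_{i=1}^n\Q\,(Q_i-\iota Q_i)\Big),
\end{equation*}
the first factor is $\qgm$-isomorphic to $\Q[\D_1(\ol{\mm})]\cong\triv_{G_{\mm}}\oplus(\text{standard})$ (the $G_{\mm}$-set $\{Q_i+\iota Q_i\}$ being isomorphic to $\{P_i\}$), and the second is irreducible of dimension $n$ — a nonzero invariant subspace contains some $Q_i-\iota Q_i$ after projection by the $i$-th sign, hence all of them by $S_n$-transitivity — and is not isomorphic to any constituent of the first (on which $\mu_2^n$ acts trivially). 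So $\Q[\D_3(\ol{\mm})]_0$ is a sum of exactly two non-isomorphic irreducibles, of dimensions $n-1$ and $n$, and hence any quotient of it has dimension $0$, $n-1$, $n$, or $2n-1$. Now $\W{\D_3}$ is such a quotient (again by \Cref{lem:V(D/K)} over $\mm$); by \eqref{eq:VDM-isom} it contains a copy of $\W{\D_1}$, which by the previous paragraph is precisely the $(n-1)$-dimensional constituent; and $\W{\D_3}\neq\W{\D_1}$ because $\W{\D_2}\neq 0$ ($Q_1\not\sim\iota Q_1$ on the positive-genus $\X_2$). The only surviving possibility is $\dim_{\Q}\W{\D_3}=2n-1$, whence $\dim_{\Q}\W{\D_2}=n$ by \eqref{eq:VDM-isom}.

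The hard part is the two structural inputs: pinning down $\gg{\kk/\mm}=S_n$ and $\gg{\wt{\kk}/\mm}=\mu_2\wr S_n$ (both essentially classical, but needing justification), and the representation theory of $\mu_2\wr S_n$ — in particular checking that $\Q[\D_2(\ol{\mm})]_0$ has exactly two irreducible summands and that the $(n-1)$-dimensional one is the same $\qgm$-module already identified with $\W{\D_1}$. Once that bookkeeping is in place, the remaining steps are the soft fact that distinct points on a positive-genus curve are not linearly equivalent. (An alternative closer to Shioda's original argument would prove $\dim_{\Q}\W{\D_1}=n-1$ by hand, showing that a nontrivial relation among the $[P_i]$ would force an algebraic relation among the independent transcendentals $u_i$; the symmetry argument above is a clean repackaging of this.)
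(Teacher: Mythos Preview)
Your argument is correct and is genuinely different from what the paper does. The paper's proof is essentially a citation: the first two dimensions are attributed to \cite{shiodasymmetry}*{Theorems~5,~6,~7}, and the third is read off from \eqref{eq:VDM-isom}. Shioda's original arguments proceed by direct analysis (height computations and explicit relations in the Mordell--Weil lattice), as you yourself note in your closing parenthetical. Your route replaces that with representation theory: once $\gg{\kk/\mm}=S_n$ and $\gg{\wt{\kk}/\mm}=\mu_2\wr S_n$ are known (and the latter is \Cref{prop:wreath-gal-isom} in the paper), the $\qgm$-modules $\Q[\D_i(\ol{\mm})]_0$ decompose into at most two non-isomorphic irreducible pieces, so the quotient $\W{\D_i}$ is pinned down by the single soft input that distinct points on a positive-genus curve are not linearly equivalent. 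This is an elegant repackaging that buys you a self-contained proof without appealing to Mordell--Weil lattice machinery; what Shioda's approach buys, by contrast, is finer information (the actual lattice structure, not just the rank).

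Two small points worth tightening. First, your appeal to ``$Q_1\not\sim\iota Q_1$'' presupposes $Q_1\neq\iota Q_1$, i.e.\ $t_1h(u_1)\neq 0$; since $h(u_1)^2=\ell(u_1)$, this amounts to $m$ and $\ell$ being coprime in $\mm[x]$, which follows because $\text{Res}(m,\ell)=\text{Res}(h,\ell)^2$ is nonzero (the coefficients of $h$ and $\ell$ are algebraically independent). Second, you could shortcut the $\W{\D_2}$ step: since $Q_i+\iota Q_i$ lies in the hyperelliptic class for every $i$, the $\iota$-invariant summand of $\Q[\D_2(\ol{\mm})]_0$ already dies in $\J_2$, so $\W{\D_2}$ is a quotient of the irreducible $n$-dimensional anti-invariant piece, hence has dimension $0$ or $n$; nonzeroness then gives $n$ directly, without routing through $\W{\D_3}$.
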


\begin{proof}
The statement for $\W{\D_1}$ and $\W{\D_2}$ is a reformulation of~\cite{shiodasymmetry}*{Theorems~5, 6} and~\cite{shiodasymmetry}*{Theorem~7}, respectively. The statement for $\W{\D_3}$ then follows from \eqref{eq:VDM-isom}. 
\end{proof}

\section{Zarhin's criterion for simplicity}\label{sec:zarhin}

To prove the simplicity statements in Theorems \ref{thm:main} and \ref{thm:quad}, we will apply Hilbert Irreducibility to \Cref{prop:gal=Sd-1} below and then appeal to the following result of Zarhin.

\begin{thm}[{\cite{zarhinSimple}*{Theorems~1.1, 1.3}}]\label{thm:zarhin-simple}
Suppose $f(x) \in K[x]$ is a degree $d$ polynomial with Galois group $S_d$, and $X/K$ is a hyperelliptic curve of the form:
\begin{enumerate}[(a),itemindent=0pt,leftmargin=1cm]
    \item $X/K: y^2 = f(x)$, or \label{item:zarhin:a}
    \item $X/K: y^2 =  (x-a)f(x)$, with $d$ odd, $a \in K$, and $f(a) \neq 0$.\label{item:zarhin:b}
\end{enumerate}
Assume that $d\geqslant 5$, and if $\chr K =3$, that $d\geqslant 7$. Then, $\End_{\kbar}((J_X)_{\kbar})=\Z$; in particular, $J_X/K$ is absolutely simple.
\end{thm}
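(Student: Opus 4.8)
The plan is to deduce the statement directly from Zarhin's work, so that the only real task is to line up the hypotheses. First I would recall the precise content of \cite{zarhinSimple}*{Theorems~1.1, 1.3}: over a field of characteristic different from $2$, one of them asserts that if $f(x)$ has degree $d$ with $\gal(f) = S_d$, then the Jacobian of $y^2 = f(x)$ has geometric endomorphism ring $\Z$, and the other asserts the same for $y^2 = (x-a)f(x)$ when $d$ is odd, $a \in K$, $f(a)\neq 0$, and $\gal(f) = S_d$; in both statements the degree bound is $d\geqslant 5$, strengthened to $d\geqslant 7$ when $\chr K = 3$. Case~\ref{item:zarhin:a} of our statement is verbatim the first of these and case~\ref{item:zarhin:b} the second, so nothing remains but to check that the transcription is faithful.

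For that transcription I would record two elementary points. First, the hypothesis $\gal(f) = S_d$ forces $f$ to be separable, and in fact irreducible (hence not a square in $K[x]$), so in case~\ref{item:zarhin:a} the polynomial $f(x)$ is squarefree, and in case~\ref{item:zarhin:b} so is $(x-a)f(x)$ because $f(a)\neq 0$; hence in both cases $X/K$ really is a nice hyperelliptic curve in the sense of \ref{emp:conventions}, and the affine model coincides with the one used by Zarhin. Second, the cases in which the characteristic-$3$ restriction actually removes something, namely $d\in\{5,6\}$ in case~\ref{item:zarhin:a} and $d=5$ in case~\ref{item:zarhin:b}, are precisely the cases in which $J_X$ has genus $2$; this is exactly why the simplicity clauses of Theorems~\ref{thm:main} and~\ref{thm:quad} carry the proviso ``$g\neq 2$ or $\chr K\neq 3$,'' and the bookkeeping must be checked to be consistent with how the theorem is invoked in \Cref{sec:zarhin}.

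Finally, for the clause ``in particular, $J_X/K$ is absolutely simple,'' I would invoke the standard implication that an abelian variety $A\neq 0$ with $\End_{\kbar}(A_{\kbar}) = \Z$ is absolutely simple: if $A_{\kbar}$ were isogenous to a nontrivial product $B_1 \times B_2$, then $\End^0_{\kbar}(A_{\kbar})$ would contain the idempotent projecting onto the $B_1$-factor, contradicting $\End^0_{\kbar}(A_{\kbar}) = \Q$. I do not anticipate any genuine obstacle here — the entire argument is a citation — and the one point deserving care is simply to state Zarhin's hypotheses in a single form that visibly covers cases~\ref{item:zarhin:a} and~\ref{item:zarhin:b} together while keeping the characteristic-$3$ exception aligned with its later use.
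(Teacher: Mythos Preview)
Your proposal is correct and matches the paper's treatment: the theorem is stated as a direct citation of Zarhin's results with no proof given in the paper, so the only content is verifying that the hypotheses align, exactly as you describe.
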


\begin{prop}\label{prop:gal=Sd-1}
The polynomial $\ell(x) \in \wt{\kk}[x]$ (cf. \ref{emp:curve-div}) has Galois group $S_{d}$. 
\end{prop}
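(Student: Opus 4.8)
Write $\mm = K(\mbf m) = K(\mbf h,\Bell)$, which by \Cref{lem:sqroot} is the rational function field in the $2d+2$ indeterminates $h_0,\dots,h_d,\ell_0,\dots,\ell_d$, and recall the tower $\mm \subseteq \kk \subseteq \wt{\kk}$ from \ref{emp:split-field-points}, with $\kk = K(\mbf u)$, $\wt{\kk} = K(\mbf t)$ and $u_i = t_i^2$. Because $\ell_0,\dots,\ell_d$ are algebraically independent over $K$, the polynomial $\ell(x) = \ell_d x^d + \dots + \ell_0$ has Galois group $S_d$ over $K(\Bell)$ (the generic polynomial of degree $d$), hence also over $\mm$, which is purely transcendental over $K(\Bell)$; let $M$ denote its splitting field over $\mm$. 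The plan is to reduce the proposition to a statement about squares. Since $M/\mm$ and $\wt{\kk}/\mm$ are both Galois, $M\cap\wt{\kk}$ is Galois over $\mm$, so $\gal(\ell/\wt{\kk}) = \gal(M/M\cap\wt{\kk})$ is a normal subgroup of $\gal(M/\mm) = S_d$. The only normal subgroup of $S_d$ not contained in $A_d$ is $S_d$ itself; hence $\gal(\ell/\wt{\kk}) = S_d$ as soon as $\gal(\ell/\wt{\kk}) \not\subseteq A_d$, that is, as soon as $\disc(\ell)$ fails to be a square in $\wt{\kk}$. So it suffices to prove $\disc(\ell) \notin (\wt{\kk}^{\times})^2$.

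\textbf{Cutting down the square condition.} Here $\wt{\kk} = K(\mbf t)$ and $\mm = K(\mbf t)^W$, where $W = \mu_2\wr S_n$ acts on $t_1,\dots,t_n$ by signed permutations. The abelianization of $W$ is $(\Z/2)^2$, so $\wt{\kk}/\mm$ has exactly three quadratic subextensions, namely $\mm(\sqrt{\delta})$ for $\delta \in \{m_0,\ \disc(m),\ m_0\disc(m)\}$; indeed these are the squares of the $W$-semiinvariants $\prod_i t_i$, $\prod_{i<j}(t_i^2 - t_j^2)$ and their product, using $m_0 = m(0) = \prod_i u_i$ (as $n$ is even) and $\disc(m) = \prod_{i<j}(u_i - u_j)^2$. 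Consequently $\disc(\ell)$ is a square in $\wt{\kk}$ only if $\disc(\ell) \equiv \delta \pmod{(\mm^{\times})^2}$ for some $\delta \in \{1,\ m_0,\ \disc(m),\ m_0\disc(m)\}$, and it remains to exclude all four possibilities. Three of them are easy: $\delta = 1$ is impossible since $\disc(\ell)$ is not a square in $\mm$ (else $\gal(\ell/\mm)\subseteq A_d$); and for $\delta = m_0$ or $\delta = m_0\disc(m)$ one observes that $m_0 = h_0^2 - \ell_0$ is irreducible in $K[\mbf h,\Bell]$ and divides neither $\disc(\ell)$ --- which lies in $K[\Bell]$ and so does not involve $h_0$ --- nor $\disc(m)$ --- because $m(0)=0$ does not force $m'(0)=0$, so the hypersurface $\{m_0 = 0\}$ is not contained in $\{\disc(m) = 0\}$ --- whence $m_0$ occurs to odd multiplicity in $\disc(\ell)\,m_0$ and in $\disc(\ell)\,m_0\disc(m)$, and neither is a square.

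\textbf{The main case and the expected obstacle.} The remaining case $\delta = \disc(m)$, i.e. showing $\disc(\ell)\disc(m) \notin (\mm^{\times})^2$, is the crux, and I would prove it by specialization: set $\mbf h \mapsto 0$ and $\ell_i \mapsto 0$ for $1 \le i \le d-1$. Since $\disc(\ell)$ and $\disc(m)$ are honest polynomials in $\mbf h,\Bell$, a square in $\mm$ remains a square after this substitution, so it is enough to check that the specialized product is a non-square in $K(\ell_0,\ell_d)$. Under the substitution, $\ell(x) = \ell_d x^d + \ell_0$, whose discriminant is a unit times $\ell_d^{\,d-1}\ell_0^{\,d-1}$, and $m(x) = x^n - \ell_d x^d - \ell_0$ is a trinomial. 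If $d$ is odd then $\gcd(d,n) = 1$ and the classical trinomial discriminant formula yields $\disc(m) = \ell_0^{\,d-1}\bigl(c_2\ell_0^{\,d+2} + c_3\ell_d^{\,n}\bigr)$ with $c_2,c_3 \in K^{\times}$, so $\disc(\ell)\disc(m)$ is, modulo squares, a unit times $c_2\ell_0^{\,d+2} + c_3\ell_d^{\,n}$, a polynomial of odd degree $d+2$ in $\ell_0$ (with nonzero $\ell_0$-constant term), hence a non-square. If $d$ is even then $m(x) = \widetilde m(x^2)$ with $\widetilde m(v) = v^{d+1} - \ell_d v^{d/2} - \ell_0$, and the identity $\disc_x\bigl(\widetilde m(x^2)\bigr) = (-4)^{d+1}\,\widetilde m(0)\,\disc_v(\widetilde m)^2$ gives $\disc(m) \equiv \ell_0$, hence $\disc(\ell)\disc(m) \equiv \ell_d$ modulo squares --- again a non-square. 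This gives $\disc(\ell) \notin (\wt{\kk}^{\times})^2$ and so the proposition. The part I expect to require the most care is precisely this last coincidence: the other three fall to a one-line divisibility argument, whereas comparing the two full discriminants $\disc(\ell)$ and $\disc(m)$ modulo squares needs a genuine computation, and one must also check that the chosen degeneration remains valid in the finitely many small characteristics where the trinomial or binomial discriminant identities degenerate --- a wrinkle which is in any case moot for the applications, where Zarhin's criterion already forces $d \ge 5$.
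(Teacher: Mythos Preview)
Your route is genuinely different from the paper's. The paper never touches discriminants of specializations; instead it proves the stronger fact that $K(\Bell)$ is algebraically closed in $\wt{\kk}$. Since $\ell(x)$ is the generic degree-$d$ polynomial over $K(\Bell)$, this immediately forces $\gal(\ell/\wt{\kk})=S_d$. The algebraic-closedness is shown by a branch–locus argument: any nontrivial finite subextension $K(\Bell)\subsetneq F\subset\wt{\kk}$ would, after base change along the purely transcendental extension $K(\Bell)\subset K[\mbf m]$, contribute an irreducible branch divisor $\delta\in K[\Bell]$ dividing $\disc\bigl(m(x^2)\bigr)$; but the only irreducible factors of the latter in $K[\mbf m]$ are $\disc(m)$ and $m_0=h_0^2-\ell_0$, neither of which lies in $K[\Bell]$. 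This argument is characteristic-free (beyond $\mathrm{char}\,K\neq 2$) and sidesteps any explicit computation.

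Your argument is correct in characteristic $0$ (and more generally whenever $\mathrm{char}\,K\nmid d(d+1)(d+2)$ in the odd case, resp.\ $\mathrm{char}\,K\nmid d$ in the even case), and the reduction to the three quadratic subextensions via $W^{\mathrm{ab}}\cong(\Z/2)^2$ is a nice structural insight. The gap is that your chosen degeneration to $\ell(x)=\ell_d x^d+\ell_0$ forces $\disc(\ell)$ to pick up the factor $d^d$, which vanishes when $\mathrm{char}\,K\mid d$; then the specialized product is identically zero and says nothing. Likewise, in the odd-$d$ trinomial case the constants $c_2,c_3$ involve $n^n$ and $d^d(d{+}2)^{d+2}$, and if either vanishes the residual expression collapses to a constant modulo squares, whose square class in $K$ you have not controlled. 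Your closing remark that this is ``moot for the applications, where Zarhin's criterion already forces $d\ge 5$'' is not right: Zarhin's hypothesis constrains $d$ relative to $\mathrm{char}\,K=3$, but does nothing to prevent, say, $\mathrm{char}\,K=5$ with $d=5$ or $\mathrm{char}\,K=7$ with $d=7$. One could presumably repair this by choosing a different specialization of $\ell$ (one that stays separable in the given characteristic), but as written the proof is incomplete in positive characteristic, whereas the paper's branch-locus argument handles all $\mathrm{char}\,K\neq 2$ uniformly.
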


\begin{proof}
Recall from \ref{emp:setup} that we identify the rational function field $K(\mbf{h},\Bell)$ with $\mm$ using the isomorphism $\phi_d:K[\mbf{m}] \iso K[\mbf{h},\Bell]$ from \Cref{lem:sqroot}. So, we have a chain of field extensions
\begin{equation*}
    K(\Bell) \inj K(\Bell,\mbf{h}) = \mm \inj \wt{\kk},
\end{equation*}
in which the first extension is purely transcendental of degree $d+1$, and the second is finite Galois. Let $L/K(\Bell)$ denote the splitting field of $\ell(x)$, and put $\L \ceq L\otimes_{K(\Bell)} \wt{\kk}$. The polynomial $\ell(x)\in K(\Bell)[x]$ has Galois group $S_{d}$, i.e., $\gal(L/K(\Bell)) \cong S_{d}$. It suffices therefore to show that $\L$ is a field, because the canonical group embedding  $(-)\otimes \id : \gal(L/K(\Bell)) \inj \aut_{\wt{\kk}}(\L)$ would then necessarily be an isomorphism (since $[\L:\wt{\kk}]=(d-1)! = \#S_{d}$). The only way $\L$ can fail to be a field is if there is some intermediate field $K(\Bell) \subsetneq F \subset L$ which is contained in $\wt{\kk}$. Thus, it suffices to show that \textit{$K(\Bell)$ is algebraically closed in $\wt{\kk}$.} 

Assume for a contradiction, then, that we have a finite extension $F \supsetneq K(\Bell)$ contained in $\wt{\kk}$. Let $R$ be the integral closure of $K[\Bell]$ in $F$, and put $S \ceq R\otimes_{K[\Bell]} K[\mbf{m}]$. Then, $S$ is an integral extension of $K[\mbf{m}]$ contained in $\wt{\kk}$, and hence, $S$ is a sub-ring of $K[\mbf{t}]$, which is the integral closure of $K[\mbf{m}]$ in $\wt{\kk}$. We have therefore a commutative diagram
\begin{equation*}
    \begin{tikzcd}
        \A(\mbf{t}) \arrow[r] & \spec S \arrow[r]\arrow[d] & \A(\mbf{m})\arrow[d, "f"]\\
        & \spec R \arrow[r] & \A(\Bell),
    \end{tikzcd}
\end{equation*}
in which the horizontal arrows are finite morphisms.  Now, let $B$ denote the branch locus (i.e., the set of branch points) of the finite morphism $\spec R \too \A(\Bell)$, so that $f\inv (B)$ is the branch locus of $\spec S \too \A(\mbf{m})$. The point $(0,\dotsc,0) \in \A(\mbf{t})(\kbar)$ is totally ramified under the morphism $\A(\mbf{t}) \too \A(\mbf{m})$, which implies that $f\inv (B)$ is non-empty. So, $f\inv (B)$ is the vanishing locus (in $\A(\mbf{m})$) of some non-constant, square-free element $\delta \in K[\Bell]$.  The branch locus $\wt{B}$ of $\A(\mbf{t}) \too \A(\mbf{m})$ is the vanishing locus of $\Delta \ceq \disc \, m(x^2) \in K[\mbf{m}]$. The containment $f\inv(B) \subset \wt{B}$ implies that $\delta$ divides $\Delta \in K[\mbf{m}]$. But this is a contradiction because the only irreducible factors of $\Delta$ in $K[\mbf{m}]$ are $\disc \,m(x)$ (see ~\cite{odoni}*{Lemma~8.1}) and $m_0 = h_0^2 - \ell_0$, both of which are not contained in the subring $K[\Bell] \subsetneq K[\mbf{m}]$.
\end{proof}

\begin{rem}
In the setting of N\'eron's Theorem (\Cref{thm:neron}), Noot~\cite{noot}*{Corollary~1.5} showed using an argument of Serre~\cite{noot}*{Proposition~1.3} that if $K$ is finitely generated of characteristic $0$, then the set of points $P\in U(K)$ for which $\End_{\kbar}((\calj_P)_{\kbar})$ is larger than $\End_{\ol{\ff}}(\J_{\ol{\ff}})$ is thin. Although he states this for a single specialization, the proof of~\cite{noot}*{Proposition~1.3} yields the above stronger conclusion; see for example the discussion at the end of~\cite{masser-endo}*{Page~461}. 
\end{rem}

\section{Determining the sets $\Et(\scrc_i,L/K)$}\label{sec:det-Et-scrc}

\begin{emp}\label{emp:E}
Fix for this section a finite Galois extension $L/K$. The construction $\scrc_1 \ceq (\kk,(\X_1)_{\kk},(\D_1)_{\kk},\ul{P})$ has symmetry by $G \ceq G_{\kk/\mm}$ (cf. \Cref{exmp:easy-symmetry}), which is isomorphic to the symmetric group $S_n$ acting naturally by permutation on $u_1,\dotsc,u_n$ (see the proof of \Cref{prop:wreath-gal-isom} below). Thus, the associated representation $G \too S_n$ is an isomorphism, which immediately implies that $\Et(\scrc_1,L/K) = \Et(n,L/K)$.

On the other hand, the constructions $\scrc_2$ and $\scrc_2$ both admit symmetry by $\wt{G} \ceq \gg{\wt{\kk}/\mm}$ (cf. \Cref{exmp:easy-symmetry}). We determine the precise structure of this group in \Cref{prop:wreath-gal-isom} below, and we determine the sets $\Et(\scrc_i,L/K)$, for $i=2,3$, in \Cref{prop:symmetry-scrc}.
\end{emp}
We determine the sets $\Et(\scrc_i,L/K)$, for $i=2,3$, in \Cref{prop:symmetry-scrc} below. 

\begin{emp}\label{emp:wreath-product}
\textbf{The group $\mu_2 \wr S_n$.} 
Let $\mu_2 =\{1,-1\}$ be the cyclic group of order $2$, and view $\mu_2^n$ as the group of set-maps $\{1,\dotsc,n\} \too \mu_2$. For an element $h \in \mu_2^n$ and $i\in\{1,\dotsc,n\}$, we put $h_i \ceq h(i)$. The symmetric group $S_n$ acts naturally on $\mu_2^n$ by 
\begin{equation*}
    \prsup{g}{h} \ceq h\circ g\inv, \; \textup{ for } g\in S_n \textup{ and } h\in \mu_2^n.
\end{equation*}
The associated semi-direct product $\mu_2^n \rtimes S_n$, denoted $\mu_2 \wr S_n$, is called the \defi{wreath product} of $\mu_2$ by $S_n$ (see~\cite{odoni}*{Section~4} for background on wreath products arising as Galois groups of composite polynomials). 
\end{emp}

\begin{prop}\label{prop:wreath-gal-isom}
We have a commutative diagram of groups
\begin{equation}\label{diag:wreath-gal-isom}
    \begin{tikzcd}
    \mu_2 \wr S_n \arrow[d, "{\resizebox{0.45cm}{0.1cm}{$\sim$}}" labl1] \arrow[r,twoheadrightarrow] & S_n \arrow[d, "{\resizebox{0.45cm}{0.1cm}{$\sim$}}" labl1]  & (h,g) \arrow[d, mapsto] \arrow[r, mapsto] & g \arrow[d, mapsto]\\
    \wt{G} \arrow[r,twoheadrightarrow] & \gg{\kk/\mm} & (t_i \mapsto h_{g(i)} t_{g(i)}) \arrow[r, mapsto] & (u_i \mapsto u_{g(i)}).
    \end{tikzcd}
\end{equation}
\end{prop}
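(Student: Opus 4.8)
The plan is to construct the left-hand isomorphism $\mu_2 \wr S_n \iso \wt{G}$ by hand, checking that the displayed formula defines a group homomorphism, that it is injective, and that source and target have the same order; the right-hand isomorphism $S_n \iso \gg{\kk/\mm}$ and the commutativity of the square will then follow with little extra work.

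First I would record the "classical" input. Since $m(x) = \prod_{i=1}^{n}(x-u_i) \in \kk[x]$ is separable (its roots $u_i$ are distinct) and its roots generate $\kk$ over $\mm$, the extension $\kk/\mm$ is a splitting field, hence Galois; the assignment $g \mapsto (u_i \mapsto u_{g(i)})$ is a group homomorphism $S_n \to \aut_\mm(\kk)$, injective by algebraic independence of the $u_i$, and since $[\kk:\mm] \leqslant n!$ it must be an isomorphism — this is the right-hand vertical map. Likewise $m(x^2) \in \mm[x]$ is separable: its roots $\pm t_1,\dots,\pm t_n$ are pairwise distinct because $\chr K \neq 2$ and the $t_i$ are distinct nonzero indeterminates; its splitting field over $\mm$ is $\wt{\kk}$, so $\wt{\kk}/\mm$ is Galois, and since $\kk/\mm$ is Galois the restriction map $\wt{G} \to \gg{\kk/\mm}$ is well-defined and surjective with kernel $\gg{\wt{\kk}/\kk}$ (this is the bottom horizontal arrow). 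For the order count I would use the tower $\mm \subset \kk \subset \wt{\kk}$: we have $[\kk:\mm] = n!$ from the above, and $[\wt{\kk}:\kk] = 2^n$ because adjoining $t_{j+1} = \sqrt{u_{j+1}}$ at each stage doubles the degree — the intermediate field $K(t_1,\dots,t_j,u_{j+1},\dots,u_n)$ is purely transcendental over $K$, so $u_{j+1}$ is a prime element of its polynomial ring and $x^2 - u_{j+1}$ is irreducible. Hence $|\wt{G}| = 2^n\, n! = |\mu_2 \wr S_n|$.

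Next I would define $\Psi \colon \mu_2 \wr S_n \to \aut_\mm(\wt{\kk})$ by $(h,g) \mapsto \sigma_{(h,g)}$ where $\sigma_{(h,g)}(t_i) = h_{g(i)} t_{g(i)}$. It fixes $\mm$ because $\sigma_{(h,g)}(t_i^2) = h_{g(i)}^2 t_{g(i)}^2 = t_{g(i)}^2$, so it permutes the $t_i^2$ and fixes each $m_j = s_{n-j}(t_1^2,\dots,t_n^2)$; a short computation with the semidirect-product law $(h,g)(h',g') = (h\cdot \prsup{g}{h'},\, gg')$ and $(\prsup{g}{h'})_i = h'_{g\inv(i)}$ shows $\sigma_{(h,g)} \circ \sigma_{(h',g')} = \sigma_{(h,g)(h',g')}$, so $\Psi$ is a homomorphism into $\wt{G}$. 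It is injective because $\sigma_{(h,g)} = \id$ forces $t_i = h_{g(i)} t_{g(i)}$ for all $i$, hence $g = \id$ and all $h_i = 1$ by algebraic independence of the $t_i$. Comparing the two cardinalities just computed, $\Psi$ is an isomorphism $\mu_2 \wr S_n \iso \wt{G}$ — the left-hand vertical arrow.

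Finally, the square in \eqref{diag:wreath-gal-isom} commutes because $\sigma_{(h,g)}$ restricts on $\kk$ to the automorphism $u_i = t_i^2 \mapsto t_{g(i)}^2 = u_{g(i)}$, which is precisely the image of $g$ under $S_n \iso \gg{\kk/\mm}$; equivalently, $\Psi$ carries the normal subgroup $\mu_2^n \subset \mu_2 \wr S_n$ onto $\ker(\wt{G} \to \gg{\kk/\mm}) = \gg{\wt{\kk}/\kk}$. I do not expect a genuine obstacle: the two technical points needing care are the degree identity $[\wt{\kk}:\kk] = 2^n$ (i.e. that $u_1,\dots,u_n$ are independent modulo squares in $\kk^\times$) and the bookkeeping that verifies $\Psi$ respects the wreath-product multiplication rather than its opposite; everything else is a direct consequence of the fact that $\wt{\kk}$ and $\kk$ are splitting fields of the explicit separable polynomials $m(x^2)$ and $m(x)$ over $\mm$.
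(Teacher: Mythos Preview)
Your proof is correct and follows essentially the same approach as the paper: both define the map $\mu_2 \wr S_n \to \wt{G}$ explicitly, verify it is a homomorphism, and establish bijectivity via a cardinality/kernel argument. The paper's version is terser (it notes that the restriction $\mu_2^n \times \{\id\} \to \gg{\wt{\kk}/\kk}$ is an isomorphism and invokes the Fundamental Theorem of Symmetric Functions for the right-hand column), while you spell out the degree computation $[\wt{\kk}:\kk]=2^n$ and the commutativity check more explicitly.
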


\begin{proof}
The map $S_n \too \gg{\kk/\mm}$ in \eqref{diag:wreath-gal-isom} is an isomorphism by the Fundamental Theorem of Symmetric Functions. The map $\mu_2 \wr S_n \too \wt{G}$ in \eqref{diag:wreath-gal-isom}, call it $f$, restricts to an isomorphism from $\mu_2^n \times \{\id\}$ to $\gg{\wt{\kk}/\kk}$. Thus, $f$ is a bijection of sets. One checks easily that it is a homomorphism, so we conclude that it is an isomorphism. 
\end{proof}

\begin{emp}\label{emp:phi-isom}
Let $\mf{R}$ and $\wt{\mf{R}}$ denote the set of roots of $m(x)$ and $m(x^2)$, respectively, i.e.
\begin{equation*}
\begin{alignedat}{2}
    \mf{R} & \ceq \{u_1,\dotsc,u_{r}\} && \subset \kk,\\
    \wt{\mf{R}} & \ceq \{t_1,-t_1,\dotsc,t_{r},-t_{r}\} && \subset \wt{\kk}.
\end{alignedat}
\end{equation*}
Write $\{e_{r}\}_{r\in \mf{R}}$ and $\{\wt{e}_{r}\}_{r\in \wt{\mf{R}}}$ for the standard basis vectors of $K^{r}$ and $K^{2r}$, respectively.
We have a natural double cover $\spec K^{2n} \too \spec K^n$ defined on rings by
\begin{equation}\label{eq:K2d-inj-K4d}
    K^{n} \inj K^{2n}, \qquad \sum_{i=1}^{n} b_i e_{u_i} \mtoo \sum_{i=1}^{n} b_i(\wt{e}_{t_i} + \wt{e}_{-t_i}).
\end{equation}
\end{emp}

\begin{lem}\label{lem:wt-G-subgroup}
The subgroup $\wt{G} \subset \aut_{K}(K^{2n})$ consists exactly of the automorphisms which restrict to an automorphism of $K^n$. 
\end{lem}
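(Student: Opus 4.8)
The plan is to reduce the statement to a combinatorial fact about permutations of the root set $\wt{\mf R}$ of $m(x^2)$. Recall from \ref{emp:phi-isom} that $K^{2n} = \bigoplus_{r \in \wt{\mf R}} K\wt e_r$, and that under \eqref{eq:K2d-inj-K4d} the subalgebra $K^n$ is the span of the idempotents $f_i \ceq \wt e_{t_i} + \wt e_{-t_i}$, $i = 1,\dots,n$; equivalently, $K^n = (K^{2n})^{\nu}$, where $\nu \in \aut_K(K^{2n})$ is the involution determined by $\nu(\wt e_{t_i}) = \wt e_{-t_i}$ (the automorphism induced by the antipodal map $r \mapsto -r$ of $\wt{\mf R}$). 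Meanwhile the $\wt G$-action on $K^{2n}$ is the permutation action coming from the $\wt G$-action on $\wt{\mf R}$, and by \Cref{prop:wreath-gal-isom} an element $(h,g) \in \mu_2 \wr S_n \cong \wt G$ acts by $\varepsilon t_i \mapsto \varepsilon\, h_{g(i)}\, t_{g(i)}$ for $\varepsilon \in \{\pm 1\}$.

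The heart of the proof is the following elementary observation, which I would establish first: \emph{for $\phi \in \aut_K(K^{2n})$, one has $\phi(K^n) = K^n$ if and only if $\phi$ commutes with $\nu$}. The ``if'' direction is immediate, since $\phi$ then preserves the fixed subring $(K^{2n})^{\nu} = K^n$ and restricts there to an automorphism. For ``only if'', use the canonical identification $\aut_K(K^{2n}) \cong \aut_{\sets}(\wt{\mf R})$ (a $K$-algebra automorphism permutes the primitive idempotents $\wt e_r$) to write $\phi(\wt e_r) = \wt e_{\pi(r)}$ for a permutation $\pi$ of $\wt{\mf R}$; if $\phi$ preserves $K^n$ then it permutes the primitive idempotents $f_1,\dots,f_n$ of $K^n$, and since $\phi(f_i) = \wt e_{\pi(t_i)} + \wt e_{\pi(-t_i)}$, the permutation $\pi$ must carry each pair $\{t_i,-t_i\}$ to another pair of the form $\{t_j,-t_j\}$. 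This is exactly the condition that $\pi$, hence $\phi$, commutes with the antipodal involution $\nu$. Consequently the automorphisms of $K^{2n}$ that restrict to automorphisms of $K^n$ are precisely the elements of the centralizer $C \ceq C_{\aut_K(K^{2n})}(\nu)$.

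It remains to identify $\wt G$ with $C$. From the explicit formula above, every $(h,g) \in \wt G$ sends the pair $\{t_i,-t_i\}$ to $\{t_{g(i)},-t_{g(i)}\}$, so its action on $K^{2n}$ commutes with $\nu$; hence $\wt G \subseteq C$ as subgroups of $\aut_K(K^{2n})$. (Conceptually, the same thing follows from the fact that \eqref{eq:K2d-inj-K4d} is the map of coordinate rings induced by the degree $2$ morphism $\varphi_1 \colon \D_3 \to \D_1$ of \Cref{lem:basic-facts}, hence is $\wt G$-equivariant for the action on $K^n$ through the quotient $\wt G \twoheadrightarrow \gg{\kk/\mm}$.) Now $\nu$ corresponds to a product of $n$ disjoint transpositions in $\aut_K(K^{2n}) \cong S_{2n}$, so $|C| = 2^n\, n!$, while $|\wt G| = |\mu_2 \wr S_n| = 2^n\, n!$ by \Cref{prop:wreath-gal-isom}. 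Since the $\wt G$-action on $K^{2n}$ is faithful — the elements $t_1,\dots,t_n$ lie in $\wt{\mf R}$ and generate $\wt\kk$ over $K$, so no nontrivial element of $\wt G$ can fix $\wt{\mf R}$ pointwise — the inclusion $\wt G \subseteq C$ is an equality, which is the assertion.

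I do not expect a serious obstacle. The only step with real content is the algebraic observation of the second paragraph (preserving the subalgebra $K^n$ is equivalent to commuting with the antipodal involution $\nu$), and the order count is already available from \Cref{prop:wreath-gal-isom}; the main thing to watch is keeping the identifications straight — of $K^{2n}$ with the function algebra on $\wt{\mf R}$, of the image of \eqref{eq:K2d-inj-K4d} with $(K^{2n})^{\nu}$, and of $\wt G$ with $\mu_2 \wr S_n$.
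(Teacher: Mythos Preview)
Your proof is correct and follows essentially the same cardinality-comparison strategy as the paper: show $\wt{G}$ is contained in the subgroup $P$ of automorphisms preserving $K^n$, then verify $|P| = 2^n\, n! = |\wt{G}|$. Your route via the characterization $P = C_{S_{2n}}(\nu)$ (centralizer of the antipodal involution) is a pleasant variant of the paper's computation of $|P|$ via the short exact sequence $1 \to \aut_{K^n}(K^{2n}) \to P \to S_n \to 1$.
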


\begin{proof}
Let $P \subset \aut_K(K^{2n})$ denote the subgroup of automorphisms which restrict to an automorphism of $L^n$; visibly, we have $\wt{G} \subset P$, so it suffices to show that $\#P = 2^{n}\cdot n!$. We have a natural surjection $P \surj S_n$, whose kernel is the group $N \ceq \aut_{K^n}(K^{2n})$, so it suffices to show that $\#N = 2^{n}$. This follows by noting that $N$ is isomorphic to the deck transformation group of the two-to-one cover $\sqcup_{i=1}^{2n} (\spec K) \surj \sqcup_{i=1}^{n} (\spec K)$.
\end{proof}

If we let $\wt{G}$ act on $K^n$ via the projection $\wt{G} \surj S_n$ (that is, via the $\wt{G}$-action on $\mf{R}$), then $K^n\inj K^{2n}$ is  $\wt{G}$-equivariant. So, twisting this inclusion by a cocycle $a \in \Hom(\glk,\wt{G})$ gives an inclusion $\prsup{a}{(K^n)} \inj \prsup{a}{(K^{2n})}$. 

Recall from \Cref{lem:basic-facts} that both $\D_2$ and $\D_3$ are isomorphic to $\spec \wt{\ee} = \spec \wt{\kk}[x]/m(x^2)$. Thus, the sets $\Et(\scrc_2,L/K)$ and $\Et(\scrc_3,L/K)$ are both determined by the $\wt{G}$-action on $\mf{R}$ (cf. \ref{emp:Et-C}) and they are equal; call this set $S$. 
Observe that if $\wt{\Omega} \in S$ (that is, if $\wt{\Omega} = \prsup{a}{(K^{2n})}$ for some $a \in \Hom(\glk,\wt{G})$), then $\wt{\Omega}$ is \'etale of degree $2$ over the $K$-subalgebra $\Omega \ceq \prsup{a}{(K^n)}$.  We observe that $S$ consists precisely of all such $\wt{\Omega} \in \Et(2n,L/K)$.  



{ 
\begin{prop}\label{prop:symmetry-scrc}
The set $\Et(\scrc_2,L/K)=\Et(\scrc_3,L/K)$ consists of all $\wt{\Omega} \in \Et(2n,L/K)$ such that $\wt{\Omega}$ is \'etale of degree $2$ over a subalgebra $\Omega \subset \wt{\Omega}$. Moreover, given such a pair $(\wt{\Omega},\Omega)$, there exists a cocycle $a\in \Hom(\glk,\wt{G})$ such that $(\wt{\Omega},\Omega) = (\prsup{a}{(K^{2n})},\prsup{a}{(K^{n})})$.
\end{prop}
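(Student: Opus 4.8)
The plan is to derive both halves of the statement from \Cref{lem:wt-G-subgroup}: under the permutation embedding $\wt{G}\inj\aut_K(K^{2n})=S_{2n}$ coming from the $\wt{G}$-action on the $2n$-element set $\wt{\mf{R}}$, the subgroup $\wt{G}$ is precisely the stabilizer of the subalgebra $K^n\subset K^{2n}$, equivalently the set of permutations of $\wt{\mf{R}}$ preserving the fibers of the double cover $\pi_0: \spec K^{2n}\too\spec K^n$ of \eqref{eq:K2d-inj-K4d}. In particular $\pi_0$ is $\wt{G}$-equivariant once $\wt{G}$ is made to act on $\spec K^n$ through $\wt{G}\twoheadrightarrow S_n$, as noted just before the statement. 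Throughout, recall that $\Et(\scrc_2,L/K)=\Et(\scrc_3,L/K)$ is by definition the image of $H^1(\glk,\wt{G})\too H^1(\glk,S_{2n})=\Et(2n,L/K)$ attached to this embedding (cf.\ \ref{emp:Et-C}).

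For the inclusion ``$\subseteq$'', I would take $a\in\Hom(\glk,\wt{G})$ and put $\wt{\Omega}\ceq\prsup{a}{(K^{2n})}$, $\Omega\ceq\prsup{a}{(K^n)}$. Twisting the $\wt{G}$-equivariant morphism $\pi_0$ by $a$ (functoriality of twisting, \ref{emp:kalg}) produces a finite $K$-morphism $\prsup{a}{\pi_0}: \spec\wt{\Omega}\too\spec\Omega$ that fits, via the canonical $L$-isomorphisms $\spec\wt{\Omega}_L\iso\spec L^{2n}$ and $\spec\Omega_L\iso\spec L^n$, into a commutative square with $\pi_{0,L}$. Hence after the faithfully flat base change $K\to L$ the morphism $\prsup{a}{\pi_0}$ becomes the standard degree-$2$ étale cover $\spec L^{2n}\too\spec L^n$; since being finite étale of degree $2$ descends along $K\to L$, the map $\spec\wt{\Omega}\too\spec\Omega$ is finite étale of degree $2$, and it is surjective (being so over $L$), so $\Omega$ is genuinely a subalgebra of $\wt{\Omega}$. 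Also $\wt{\Omega}\in\Et(2n,L/K)$ by construction, so $\wt{\Omega}$ belongs to the right-hand set.

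For ``$\supseteq$'' together with the ``moreover'' clause, let $\wt{\Omega}\in\Et(2n,L/K)$ and let $\Omega\subset\wt{\Omega}$ be a subalgebra with $\pi: \spec\wt{\Omega}\too\spec\Omega$ finite étale of degree $2$. First I would record that $\dim_K\Omega=n$ and that $\Omega$ is itself split by $L$ — indeed $\Omega\otimes_K L$ is an $L$-subalgebra of $\wt{\Omega}\otimes_K L\cong L^{2n}$, hence a product of copies of $L$ — so over $L$ the morphism $\pi$ is a $2$-to-$1$ surjection of $2n$-element sets, just like $\pi_{0,L}$. As any two $2$-to-$1$ surjections of finite sets of these sizes are isomorphic, I can choose an $L$-algebra isomorphism $\phi: \spec L^{2n}\iso\spec\wt{\Omega}_L$ carrying the fiber partition of $\pi_{0,L}$ onto that of $\pi_L$; it induces on quotients an $L$-isomorphism $\psi: \spec L^n\iso\spec\Omega_L$ compatible with $\pi_{0,L}$ and $\pi_L$. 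Viewing $(\spec\wt{\Omega},\phi)$ as an $L/K$-twist of $\spec K^{2n}$, let $a=(a_\sigma)$ be its difference cocycle (\ref{emp:twisting}), so a priori $a_\sigma=\phi\inv\circ\prsup{\sigma}{\phi}\in\aut_L(\spec L^{2n})=S_{2n}$. The crucial observation is that $a_\sigma\in\wt{G}$ for all $\sigma$: both $\phi$ and $\prsup{\sigma}{\phi}$ send the fiber partition of $\pi_{0,L}$ (which is $\sigma$-stable, being defined over $K$) onto the fiber partition of $\pi_L$ (which $\sigma$ again preserves, being defined over $K$), so $a_\sigma$ preserves the fiber partition of $\pi_{0,L}$, and \Cref{lem:wt-G-subgroup} identifies that stabilizer with $\wt{G}$. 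Therefore $a\in Z^1(\glk,\wt{G})=\Hom(\glk,\wt{G})$; the twisting bijection gives $\prsup{a}{(K^{2n})}\cong\wt{\Omega}$, and twisting $\pi_0$ by this $a$ as in the first step recovers $\pi$ up to the compatible isomorphisms $\phi,\psi$, whence $\prsup{a}{(K^n)}\cong\Omega$. Thus $(\wt{\Omega},\Omega)=(\prsup{a}{(K^{2n})},\prsup{a}{(K^n)})$.

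The step I expect to be the main obstacle — or rather, the only one that is more than bookkeeping — is showing that the difference cocycle lands in $\wt{G}$ and not in all of $S_{2n}$; this is what forces the choice of the splitting isomorphism $\phi$ to be compatible with the two fiber structures, and it is exactly \Cref{lem:wt-G-subgroup} that makes that compatibility pay off. Everything else — descent of ``finite étale of degree $2$'', functoriality of twisting, and the dictionary between twists and difference cocycles — is already supplied by \ref{emp:twisting} and \ref{emp:kalg}.
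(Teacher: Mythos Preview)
Your proposal is correct and follows essentially the same approach as the paper: choose splitting isomorphisms for $\wt{\Omega}$ and $\Omega$ compatible with the inclusion $K^n\hookrightarrow K^{2n}$, form the difference cocycle, and invoke \Cref{lem:wt-G-subgroup} to see it lands in $\wt{G}$. You supply a bit more detail than the paper does—an explicit treatment of the ``$\subseteq$'' inclusion (which the paper disposes of in the paragraph preceding the statement), the observation that $\Omega$ is automatically split by $L$, and a fuller explanation of why $a_\sigma$ preserves the fiber partition—but the argument is the same.
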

}
\begin{proof}
Suppose we are given a pair $(\wt{\Omega},\Omega)$ as above. Choose $L$-isomorphisms $p:\Omega_{L} \iso L^n$ and $\wt{p}:\wt{\Omega}_{L} \iso L^{2n}$ which fit into a commutative diagram of $L$-algebras
\begin{equation}
    \begin{tikzcd}
    \wt{\Omega}_{L} \arrow[r, "\wt{p}", "{\resizebox{0.45cm}{0.1cm}{$\sim$}}"'] & L^{2n}\\
    \Omega_{L} \arrow[u, hook] \arrow[r, "{p}", "{\resizebox{0.45cm}{0.1cm}{$\sim$}}"'] & L^n \arrow[u, hook].
    \end{tikzcd}
\end{equation}
Let $a \in \Hom(\glk,S_{2n})$ be the difference cocycle $\sigma \mtoo \wt{p} \circ (\id \otimes\sigma) \circ \wt{p}\inv \circ (\id \otimes \sigma) \inv$ associated to $(\wt{\Omega},\wt{p})$ (see \ref{emp:twisting}). 
For any $\sigma \in \glk$, the automorphism $a_{\sigma} \in S_{2n} = \aut_L(L^{2n})$ restricts to an automorphism of $L^n$. \Cref{lem:wt-G-subgroup} implies that the image of $a$ lands in the subgroup $\wt{G} \subset S_{2n}$, so we may view $a$ as a cocycle in $\Hom(\glk,\wt{G})$ and conclude that $\wt{\Omega} = \prsup{a}{(K^{2n})}$ and $\Omega = \prsup{a}{(K^{n})}$.
\end{proof}

\section{Twisting the Mestre--Shioda construction}\label{sec:twisting-MS}

Now, we twist the Mestre--Shioda construction(s) from \eqref{eq:MS-construction}. We continue with the fixed notation of the latter section, i.e., $d$ is a fixed positive integer and $n = 2d+2$. 

\begin{prop}\label{prop:twist-MS}
Let $\Omega$ be a \fetk of degree $n$, and $\wt{\Omega}$ a finite \'etale $\Omega$-algebra of degree $2$. Then, there exist finite field extensions $\wt{\ff}/\ff/\mm$ with the following properties.
\begin{enumerate}[(a),itemindent=0pt,leftmargin=1cm]
    \item \label{prop:twist-MS:rational}$\wt{\ff}$ and $\ff$ are $n$-dimensional rational function fields over $K$.
    \item \label{prop:twist-MS:galSd-1}The polynomial $\ell(x) \in \mm[x]$ has Galois group $S_{d}$ over $\ff$ and $\wt{\ff}$. 
    \item \label{prop:twist-MS:galoismodule}$\W{(\D_1)_{\ff}},\W{(\D_2)_{\wt{\ff}}},$ and $\W{(\D_3)_{\wt{\ff}}}$ are $\qgk$-modules, and we have isomorphisms
    \begin{align*}
        \W{(\D_1)_{\ff}} &\cong \V{\Omega}{K},\\
        \W{(\D_2)_{\wt{\ff}}} &\cong \V{\wt{\Omega}}{\Omega},\\
        \W{(\D_3)_{\wt{\ff}}} &\cong \V{\wt{\Omega}}{K}. 
    \end{align*}
\end{enumerate}
\end{prop}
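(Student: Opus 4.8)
The plan is to realize $\ff$ and $\wt{\ff}$ as suitable twists of the Mestre--Shioda fields $\kk$ and $\wt{\kk}$ from \eqref{eq:MS-construction}. Fix a finite Galois extension $L/K$ that splits $\wt{\Omega}$, hence also the subalgebra $\Omega\subset\wt{\Omega}$. Since $\wt{\Omega}$ is \'etale of degree $2$ over $\Omega$, \Cref{prop:symmetry-scrc} provides a cocycle $a\in\Hom(\glk,\wt{G})$ with $\prsup{a}{(K^{2n})}=\wt{\Omega}$ and $\prsup{a}{(K^{n})}=\Omega$; write $\bar a\in\Hom(\glk,G)$ for its image under the projection $\wt{G}\surj G=\gg{\kk/\mm}$. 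Put $\ff\ceq\prsup{\bar a}{\kk}$ and $\wt{\ff}\ceq\prsup{a}{\wt{\kk}}$. By \Cref{prop:twisting-C} \ref{prop:twisting-C-isom-mm} both contain $\mm$, and applying the functor $\prsup{a}{(-)}$ to the $\wt{G}$-equivariant morphism $\spec\wt{\kk}\too\spec\kk$ (where $\wt{G}$ acts on $\spec\kk$ through $\wt{G}\surj G$) yields $\spec\wt{\ff}\too\spec\ff$, i.e. an inclusion $\ff\subseteq\wt{\ff}$ of extensions of $\mm$; these are finite, being forms of $\wt{\kk}/\kk/\mm$. Twisting $\scrc_1$ (symmetry by $G$) by $\bar a$ and $\scrc_2,\scrc_3$ (symmetry by $\wt{G}$) by $a$, \Cref{prop:twisting-C} \ref{prop:twisting-C-isom-twist} produces constructions for $\Omega/K$ over $\ff$ and for $\wt{\Omega}/K$ over $\wt{\ff}$, with divisors $(\D_1)_{\ff}$, $(\D_2)_{\wt{\ff}}$, $(\D_3)_{\wt{\ff}}$.

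Part \ref{prop:twist-MS:rational} is then immediate from \Cref{lem:hilbert 90}: the $G$-action on $\kk=K(\mbf{u})$ is by permutation and the $\wt{G}=\mu_2\wr S_n$-action on $\wt{\kk}=K(\mbf{t})$ is by signed permutation (\Cref{prop:wreath-gal-isom}), both linear in the sense of \ref{emp:linear}, so $\ff\cong\kk$ and $\wt{\ff}\cong\wt{\kk}$ over $K$ are $n$-dimensional rational function fields.

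For part \ref{prop:twist-MS:galoismodule}: by \Cref{prop:twisting-C} \ref{prop:twisting-C-isom-dim} together with \Cref{thm:shioda-dim} (which gives $\dim_{\Q}\W{\D_1}=n-1$ and $\dim_{\Q}\W{\D_3}=2n-1$, both maximal), $\W{(\D_1)_{\ff}}$ and $\W{(\D_3)_{\wt{\ff}}}$ are $\qgk$-modules with $\W{(\D_1)_{\ff}}\cong\V{\Omega}{K}$ and $\W{(\D_3)_{\wt{\ff}}}\cong\V{\wt{\Omega}}{K}$. For the middle isomorphism, base-change the $\mm$-isogeny $\J_3\too\J_1\times\J_2$ and the surjections $\D_3\surj\D_1$, $\D_3\surj\D_2$ of \Cref{lem:basic-facts} to $\wt{\ff}$ and restrict exactly as in \ref{emp:morphism-jac}, obtaining a $\qgk$-isomorphism $\W{(\D_3)_{\wt{\ff}}}\cong\W{(\D_2)_{\wt{\ff}}}\oplus\W{(\D_1)_{\wt{\ff}}}$, where $\W{(\D_1)_{\wt{\ff}}}=\W{(\D_1)_{\ff}}\cong\V{\Omega}{K}$ (the points of $(\D_1)_{\ff}$ already being rational over $\ff_L\subseteq\wt{\ff}_L$). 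Comparing with $\W{(\D_3)_{\wt{\ff}}}\cong\V{\wt{\Omega}}{K}\cong\V{\wt{\Omega}}{\Omega}\oplus\V{\Omega}{K}$ (the defining identity of $\V{\wt{\Omega}}{\Omega}$) and cancelling the common summand $\V{\Omega}{K}$---legitimate since all modules in sight factor through the semisimple ring $\Q\glk$ (as $\Omega,\wt{\Omega}$ are split by $L$) and hence satisfy Krull--Schmidt---gives $\W{(\D_2)_{\wt{\ff}}}\cong\V{\wt{\Omega}}{\Omega}$.

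Part \ref{prop:twist-MS:galSd-1} is the crux. Since $\ff\subseteq\wt{\ff}$, it suffices to show $\ell(x)$ has Galois group $S_{d}$ over $\wt{\ff}$, and for this it is enough that $K(\Bell)$ be algebraically closed in $\wt{\ff}$: recall from the proof of \Cref{prop:gal=Sd-1} that $\ell(x)\in K(\Bell)[x]$ is the generic polynomial of degree $d$ and has Galois group $S_{d}$ over $K(\Bell)$, so it retains this Galois group over any overfield in which $K(\Bell)$ is algebraically closed. Now $\wt{\kk}/K(\Bell)$ is a \emph{regular} extension: it is separable, being a composite of the purely transcendental extension $\mm/K(\Bell)$ with the finite Galois extension $\wt{\kk}/\mm$, and $K(\Bell)$ is algebraically closed in $\wt{\kk}$ by the proof of \Cref{prop:gal=Sd-1}. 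Regularity is preserved under extension of the ground field, so $\wt{\kk}_L/L(\Bell)$ is regular; in particular $L(\Bell)$ is algebraically closed in $\wt{\kk}_L$, hence also in $\wt{\ff}_L$ via the $\mm_L$-isomorphism $\wt{\kk}_L\cong\wt{\ff}_L$ of \Cref{prop:twisting-C} \ref{prop:twisting-C-isom-mm}. Finally, if $\alpha\in\wt{\ff}$ is algebraic over $K(\Bell)$, then $\alpha\in\wt{\ff}_L$ is algebraic over $L(\Bell)$, so $\alpha\in\wt{\ff}\cap L(\Bell)$; this intersection equals $K'(\Bell)$ with $K'\ceq\wt{\ff}\cap L$, and $K'=K$ because $K$ is algebraically closed in the rational function field $\wt{\ff}$ (part \ref{prop:twist-MS:rational}). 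Hence $\alpha\in K(\Bell)$. The step I expect to be the main obstacle is exactly this descent of algebraic closedness from $\wt{\kk}$ to its twist $\wt{\ff}$, which is what forces the passage through $L$ and the appeal to part \ref{prop:twist-MS:rational}.
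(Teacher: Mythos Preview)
Your proof is correct and follows essentially the same approach as the paper: choose a cocycle $a\in\Hom(\glk,\wt{G})$ via \Cref{prop:symmetry-scrc}, define $\ff$ and $\wt{\ff}$ as the corresponding twists of $\kk$ and $\wt{\kk}$, invoke \Cref{lem:hilbert 90} for part~\ref{prop:twist-MS:rational}, and combine \Cref{prop:twisting-C}\ref{prop:twisting-C-isom-dim} with \Cref{thm:shioda-dim} and the splitting \eqref{eq:VDM-isom} for part~\ref{prop:twist-MS:galoismodule}.

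The only noticeable difference is in part~\ref{prop:twist-MS:galSd-1}. The paper takes a shorter route: since \Cref{prop:gal=Sd-1} holds over any base field of characteristic $\neq 2$, one may apply it directly over $L$ to conclude that $\ell(x)$ has Galois group $S_d$ over $\wt{\kk}_L$; then the $\mm_L$-isomorphism $\wt{\kk}_L\cong\wt{\ff}_L$ from \Cref{prop:twisting-C}\ref{prop:twisting-C-isom-mm} (which is automatically an $L(\Bell)$-isomorphism) transports this to $\wt{\ff}_L$, and finally the Galois group over the subfield $\wt{\ff}\subset\wt{\ff}_L$ can only be larger. Your argument via regularity of $\wt{\kk}/K(\Bell)$, base-change to $L$, and a descent to show $K(\Bell)$ algebraically closed in $\wt{\ff}$ itself is valid but does more work than needed---you never actually need algebraic closedness over $\wt{\ff}$, only that the Galois group over $\wt{\ff}_L$ is already $S_d$.
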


\begin{proof}
Let $L/K$ be a finite Galois extension which splits $\wt{\Omega}/K$. By \Cref{prop:symmetry-scrc}, we can choose a cocycle $a\in \Hom(\glk,\wt{G})$ such that $(\wt{\Omega},\Omega) = (\prsup{a}{(K^{2n})},\prsup{a}{(K^{n})})$. The group $\wt{G}$ acts on the field $\wt{\kk}_L$, and on its subfield $\kk_L$ (via the projection to $S_n$). The inclusion $\kk_L \hookrightarrow \wt{\kk}_L$ is $\wt{G}$-equivariant, so, letting $\ff$ and $\wt{\ff}$ denote the twists $\prsup{a}{\kk}$ and $\prsup{a}{\wt{\kk}}$, respectively, we have an inclusion of fields $\ff \hookrightarrow \wt{\ff}$. We check now that the various parts of the proposition hold.
\begin{enumerate}[(a)]
    \item The $\wt{G}$ action on $\kk_L$ and $\wt{\kk}_L$, defined in \eqref{diag:wreath-gal-isom}, is visibly linear (cf. \ref{emp:linear}), so \Cref{lem:hilbert 90} implies that $\ff/K$ and $\wt{\ff}/K$ are $n$-dimensional rational function fields.
    \item \Cref{prop:gal=Sd-1} says that $\ell(x)$ has Galois group $S_{d}$ over $\wt{\kk}_L$. \Cref{prop:twisting-C} \ref{prop:twisting-C-isom-mm} says that $\wt{\ff}_L \cong \wt{\kk}_L$ as $\mm_L$-algebras, and hence, as $L(\Bell)$-algebras. It follows that $\ell(x)$ has Galois group $S_{d}$ over $\wt{\ff}_L$, and hence, also over $\wt{\ff}$. 
    \item \Cref{prop:twisting-C} \ref{prop:twisting-C-isom-twist} says that twisting $\ul{Q}$ and $\ul{R}$ by the cocycle $a$ gives $\wt{\ff}$-isomorphisms
    \begin{equation*}
    	\prsup{a}{\ul{Q}} : (\spec \wt{\Omega})_{\ff} \iso (\D_2)_{\wt{\ff}}, \qquad \prsup{a}{\ul{R}} : (\spec \wt{\Omega})_{\ff} \iso (\D_3)_{\wt{\ff}}.
    \end{equation*}
    Note that $\ff=\prsup{b}{\kk}$, where $b\in \Hom(\glk,S_n)$ is the composition of $a$ with the projection $\wt{G} \surj S_n$.  So, \Cref{prop:twisting-C} \ref{prop:twisting-C-isom-twist} says that we can twist $\ul{P}$ by $b$ to get an $\ff$-isomorphism  $$\prsup{b}{\ul{P}}: (\spec \Omega)_{\ff} \iso (\D_1)_{\ff}.$$
    
    Now, for $i=1$ and $3$, we have $\dim_{\Q} W(\D_i) = \deg \D_i - 1$ by \Cref{thm:shioda-dim}, so the first and third isomorphisms follow from \Cref{prop:twisting-C} \ref{prop:twisting-C-isom-dim}. Since $(\D_1)_{\wt{\ff}} \cong \Omega_{\wt{\ff}}$, we again have that $\W{(\D_1)_{\wt{\ff}}} \cong \V{\Omega}{K}$. By \eqref{eq:VDM-isom}, we have a $\Q \gg{\wt{\ff}}$-module isomorphism $\W{(\D_3)_{\wt{\ff}}} \cong \W{(\D_2)_{\wt{\ff}}} \oplus \W{(\D_1)_{\wt{\ff}}}.$ It follows from the definition of $\V{\wt{\Omega}}{\Omega}$ in \ref{emp:V-wt-Omega} that $\W{(\D_2)_{\wt{\ff}}} \cong \V{\wt{\Omega}}{\Omega}$. \qedhere
\end{enumerate}
\end{proof}

\begin{emp}\label{emp:coordinatize}
\textbf{Coordinatizing the field $\ff$.} 
The construction $\prsup{b}{\scrc_1} \ceq (\ff,(\X_1)_{\ff},(\D_1)_{\ff},\prsup{b}{\ul{P}})$ appearing in the proof of \Cref{prop:twist-MS} above is the {Liu--Lorenzini Construction} described in \ref{emp:LLC}. We explain now how to obtain algebraically independent elements $z_1,\dotsc,z_{n} \in \ff$ such that $\ff = K(\mbf{z})$. With this, we fully recover \ref{emp:LLC} as a twist of \ref{emp:MSC}, as claimed in \ref{emp:outline-layout}. 

Identify $\D_1$ with $\spec \ee$, where $\ee = \mm[x]/m(x)$, so that $\prsup{b}{\ul{P}}$ is given on coordinate rings by an $\ff$-algebra isomorphism
\begin{equation*}\label{eq:twist-theta}
    \prsup{b}{\ul{P}^{\#}} :  \ee_{\ff} = \ff[x]/m(x) \iso \Omega_{\ff}.
\end{equation*}
Put $\alpha \ceq \prsup{b}{\ul{P}^{\#}}(x)\in \Omega_{\ff}$. Then, a choice of $K$-basis $\{\alpha_1,\dotsc,\alpha_{n}\}$ for $\Omega$ determines a tuple $\mbf{z}\ceq z_1,\dotsc,z_{n} \in \ff$ such that $\alpha = \alpha_1 \otimes z_1 + \dotsb + \alpha_{n}\otimes z_{n} \in \Omega_\ff.$
Let $\ff'$ be the subfield of $\ff$ generated by $K$ and the tuple of elements $\mbf{z}$. We claim now that $\ff = \ff'$. 
To see this, write $\chi(x) \ceq x^{n} + f_{n-1}(\mbf{z})x^{n-1} + \dotsb + f_0(\mbf{z}) \in \ff'[x]$ for the characteristic polynomial of $\alpha \in \Omega_{\ff}$. Since $m(x) = 0 \in \ff[x]/m(x)$, it follows that $m(\alpha) = 0 \in \Omega_{\ff}$, so we must have $m(x) = \chi(x) \in \ff'[x].$ Since the coefficients of $m(x)$ generate $\mm$ over $K$, we have inclusions $\mm \subset \ff' \subset \ff$. Writing $\Hom_K(\Omega,L) = \{\varepsilon_1,\dotsc,\varepsilon_1\}$, we have the identity
\begin{equation*}
	m(x) = \prod_{i=1}^{n} \big(x - (z_1 \otimes \varepsilon_i(\alpha_1) + \dotsb + z_{n}\otimes \varepsilon_i(\alpha_{n})) \big) \in \ff'_L[x].
\end{equation*}
It follows that the map
\begin{equation*}
    u_i \mtoo z_1 \otimes \varepsilon_i(\alpha_1) + \dotsb + z_{n}\otimes \varepsilon_i(\alpha_{n}) \in \ff'_L, \qquad i=1,\dotsc,n,
\end{equation*}
defines an $\mm_L$-algebra isomorphism $\kk_L \iso \ff'_L$, which immediately implies that $\ff = \ff'$. Since $\trdeg_K \ff =n$, we conclude that the $z_i$'s are algebraically independent over $K$ and $\ff = K(\mbf{z})$. We note also, after the fact, that the inclusion $\mm \inj \ff$ coincides with the inclusion in \ref{emp:LLC}. 
\end{emp}

\section{Tying it all together- proofs of the main theorems}

\begin{emp}\label{proof-main}
\textit{Proof of Theorem \ref{thm:main}.} 
Assume the hypotheses of \Cref{thm:main}, i.e., $K$ is a Hilbertian field of characteristic different from $2$ and $\Omega$ is a \fetk of degree $n\geqslant 1$. Let $g$ be a positive integer which satisfies $4g+6 \geqslant n$. Our goal is to produce infinitely many genus $g$ hyperelliptic curves $X/K$ such that \ref{thm:main:realize} $J_X/K$ realizes $\V{\Omega}{K}$, and \ref{thm:main:simple} $J_X/K$ is absolutely simple if $g\neq 2$ or $\chr K \neq 3$. In proving \ref{thm:main:realize}, it suffices to replace $\Omega$ with $\Omega' \ceq \Omega \times K^{4g+6-n}$ since $\V{\Omega}{K} \subset \V{\Omega\times K^{4g+6-n}}{K}$. Thus, we assume without loss of generality that $n=4g+6$. Putting $d\ceq 2g+2$ in \Cref{sec:mestre-shioda}, we find that $\scrc_1$ is a genus $g$ construction for $K^n/K$. \Cref{prop:twist-MS} gives a rational function field $\ff \ceq \prsup{a}{\kk}$ such that the twist $\prsup{a}{\scrc_1}$ is a genus $g$ construction for $\Omega/K$. Putting $U\ceq \spec R$, where $R$ is the integral closure in $\ff$ of $K[\mbf{m},(\disc\,\ell(x))\inv]$, the curve $(\X_1)_{\ff}/\ff$ extends to a smooth, projective relative curve $\calx_1/U$ containing the open affine $\spec R[x,y]/(y^2 - \ell(x))$. Let $\calj_1/U$ denote the relative Jacobian of $\calx_1/U$ (see~\cite{neronmodels}*{Proposition~9.4.4}). Now, we specialize:
\begin{enumerate}[(i)]
    \item By \Cref{prop:twist-MS} \ref{prop:twist-MS:galoismodule}, $(\J_1)_{\ff}/\ff$ realizes $\V{\Omega}{K}$, so \Cref{lem:specialization} gives a thin set $T \subset U(K)$ such that for $P \in U(K) \backslash T$, the Jacobian  of $(\calx_1)_P/K$ realizes $\V{\Omega}{K}$.
    \item By \Cref{prop:twist-MS} \ref{prop:twist-MS:galSd-1}, $\ell(x) \in \ff[x]$ has Galois group $S_{d}$, so the classical Hilbert Irreducibility Theorem~\cite{serremw}*{Section~11.1} gives a thin set $T' \subset U(K)$ such that for $P \in \calb(K) \backslash T'$, the specialized polynomial $\ell_P(x) \in K[x]$ has Galois group $S_{d}$. Since $\deg \ell_P(x) = d-1$, Zarhin's criterion (\Cref{thm:zarhin-simple} \ref{item:zarhin:a}) implies that the Jacobian of the curve $(\calx_1)_P/K:y^2 = \ell_P(x)$ is absolutely simple provided $g\neq 2$ (i.e., $\deg\, \ell(x) \neq 6$) or $\chr K \neq 3$.  
\end{enumerate}
Put $S\ceq U(K) \backslash (T\cup T')$.
Since $K$ is assumed to be Hilbertian, $S$ is Zariski-dense in $U$, and since the family $\calx_1/U$ varies in moduli (i.e., the classifying morphism $V \too M_{g,K}$ is non-constant), the set $\{(\calx_1)_P/K\}_{P \in S}$ contains infinitely many genus $g$ hyperelliptic curves. Each of these curves witnesses parts \ref{thm:main:realize} and \ref{thm:main:simple} of the theorem. 

Finally, if $n\leqslant 10$, then the curves $\{ (\calx_1)_P/K\}_{P \in S}$ can be chosen to be of genus one. The set $\{ (\calj_1)_P/K\}_{P \in S}$ then contains infinitely many elliptic curves having pair-wise distinct $j$-invariants and realizing the Galois module $\V{\Omega}{K}$. This concludes the proof. \qed
\end{emp}

\begin{emp}\label{proof-quad}
\textit{Proof of \Cref{thm:quad}.} Assume the hypotheses of \Cref{thm:quad}, i.e., $K$ is a Hilbertian field of characteristic different from $2$, $\Omega$ is a \fetk of degree $n\geqslant 1$, and $\wt{\Omega}$ is an \'etale $\Omega$-algebra of degree $2$. Let $g$ be a positive integer which satisfies $4g+4\geqslant n$. Our goal is to produce infinitely many genus $g$ hyperelliptic curves $X/K$ such that \ref{thm:quad:realize} $J_X/K$ realizes $\V{\wt{\Omega}}{\Omega}$, and \ref{thm:quad:simple} $J_X/K$ is absolutely simple if $g\neq 2$ or $\chr K \neq 3$. To prove \ref{thm:quad:realize}, it suffices to replace $\Omega$ (resp. $\wt{\Omega}$) with $\Omega \times K^{m}$ (resp. $\wt{\Omega} \times K^{2m}$), where $m\ceq 4g+4-n$; this follows because $\V{\wt{\Omega}}{\Omega} \subset \V{(\wt{\Omega} \times K^{2m})}{(\Omega \times K^m)}$ (cf. \Cref{lem:V-wt-Omega-identity}). So, we assume without loss of generality that $n=4g+4$. Then, setting $d=2g+2$ in \Cref{sec:mestre-shioda}, we see that $\scrc_2$ is a genus $g$ construction for $K^{2n}/K$. \Cref{prop:twist-MS} gives a $(2d+2)$-dimensional rational function field $\wt{\ff} \ceq \prsup{a}{\wt{\kk}}$ such that the Jacobian $(\J_2)_{\wt{\ff}}/\wt{\ff}$ of  $(\X_2)_{\wt{\ff}}/\wt{\ff}$ realizes $\V{\wt{\Omega}}{\Omega}$, and such that the degree $d$ polynomial $\ell(x)$ has Galois group $S_{d}$ over $\wt{\ff}$. Arguing now in a manner essentially identical to the proof of \Cref{thm:main} (using \Cref{thm:zarhin-simple} \ref{item:zarhin:b} for the desired simplicity statement), we conclude that $(\X_2)_{\wt{\ff}}/\wt{\ff}$ can be specialized to give infinitely many genus $g$ hyperelliptic curves $X/K$ witnessing parts \ref{thm:quad:realize} and \ref{thm:quad:simple} of the theorem. 

Finally, if $n\leqslant 8$, then we take $g=1$ and conclude as we did in the proof of \Cref{thm:quad}.\qed
\end{emp}

\begin{emp}\label{proof-all reps}
\textit{Proof of \Cref{thm:all-reps}.} Given a Galois module $V$, we choose a \fetk $\Omega$ such that $V \subset \V{\Omega}{K}$ and apply \Cref{thm:main}.\qed
\end{emp}

\begin{rem}\label{rem:shioda simple}
In the case $\Omega = K^{4g+6}$, \Cref{thm:main} gives infinitely many genus $g$ hyperelliptic curves  $X/K$ with $J_X/K$ absolutely simple and having Mordell--Weil rank at least $4g+5$ over $K$. The case $K=\Q$ is a result of Shioda and Terasoma~\cite{terasoma}-- they showed using a different method (not relying on~\cite{zarhinSimple}) that the curve ${(\X_1)}_{\kk}$ admits infinitely many specializations $X/\Q$ such that $\End_{\qbar}((J_X)_{\qbar})=\Z$ and the specialization map $\J_1(\kk) \too J_{X}(\Q)$ is injective.
\end{rem}

\bibliographystyle{alpha}
\bibliography{real-prob}
\end{document}